	\DeclareMathOperator{\curv}{Curv}
	\DeclareMathOperator{\supp}{supp}
	\DeclareMathOperator{\wfs}{\textup{WF}}
\newcommand{\fbarg}{\mathbf{F}_N}
\newcommand{\fbi}[1][\hbar]{\mathcal{B}_{#1}}
\newcommand{\Cm}{\mathbb{C}}
\newcommand{\Nm}{\mathbb{N}}
\newcommand{\Rm}{\mathbb{R}}
\newcommand{\Tm}{\mathbb{T}}
\newcommand{\lpar}{\left(}
\newcommand{\rpar}{\right)}
\newcommand{\lacc}{\left\{}
\newcommand{\racc}{\right\}}
\newcommand{\lver}{\left|}
\newcommand{\rver}{\right|}
\newcommand{\lnor}{\left\|}
\newcommand{\rnor}{\right\|}
	\newtheorem{theorem}{Theorem}[section]
	\newtheorem*{theorem*}{Theorem}
	\newtheorem{corollary}{Corollary}[theorem]
	\newtheorem{lemma}[theorem]{Lemma}
	\theoremstyle{definition}
	\newtheorem{definition}{Definition}[section]
	\newtheorem{example}{Example}[section]
	\newtheorem{proposition}{Proposition}[section]
	\newtheorem{corollaryp}{Corollary}[proposition]
\title{Semiclassical concentration estimates for Berezin-Toeplitz quasimodes for regular energies}
\author{Nathan Réguer\footnote{Univ Rennes, CNRS, IRMAR - UMR 6625, F-35000 Rennes, France.\\nathan.reguer@univ-rennes.fr\hfill MSC codes: 81Q10, 81Q20, 35Q40}}
\date{}
\begin{document}

\maketitle

\begin{abstract}
The purpose of this article is to prove sharp $L^p$ bounds for quasimodes of Berezin-Toeplitz operators. We consider examples with explicit computations and a general situation on compact spaces and $\Cm^n$. In both cases the eigenvalue is a regular value of the operator symbol. We then use the link between pseudodifferential and Berezin-Toeplitz operators to obtain an $L^p$ bound of the FBI transform of quasimodes of pseudodifferential operators.

\end{abstract}

\tableofcontents

\section{Introduction}

Let $a\in C^{\infty}(\Rm^n)$ be bounded with all its derivatives, and $Op_{\hbar}^W(a)$ its semiclassical Weyl quantisation given by, for $u_{\hbar}$ in the Schwartz space,

\[
Op_{\hbar}^W(a)u_{\hbar} = (2\pi\hbar)^{-n} \int_{\Rm^n\times\Rm^n}e^{\frac{i(x-y)\cdot\xi}{\hbar}}a\lpar\frac{x+y}{2},\xi\rpar u(y)dyd\xi.
\]
If $u_{\hbar}\in L^2(\Rm^n)$ is a quasimode for $Op_{\hbar}^W(a)$, which means there exists $\lambda \in\Rm$ and a sequence $(\lambda_{\hbar})$ that converges to $\lambda$ with

\[
Op_{\hbar}^W(a) u_{\hbar} = \lambda_{\hbar} u_{\hbar} + O(\hbar^{\infty})\lnor u_{\hbar} \rnor_{L^2(\Rm^n)}
\]
as $\hbar \rightarrow 0$, then we know that the function $u_{\hbar}$ concentrates in  \textit{phase space} near the classical energy level $a^{-1}(\{\lambda\})$. More precisely, for any $b \in C^{\infty}_c(\Rm^n)$ whose support does not intersect $a^{-1}(\{\lambda\})$,

\[
\lnor Op_{\hbar}^W(b)u_{\hbar} \rnor_{L^2(\Rm^n)} = O(\hbar^{\infty}) \lnor u_{\hbar} \rnor_{L^2(\Rm^n)}.
\]
This concentration is a consequence of the symbolic calculus of pseudodifferential operators. A proof can be found in \cite{zwor12} for Schrödinger type operators, and it also works for $a$ as above. The elementary properties of pseudodifferential operators can be found in \cite{dima99}.

An analogous bound can be proved for Berezin-Toeplitz operators, which is a framework where the wave functions are defined on the phase space, here $\Rm^{2n}$, seen as $\Cm^n$. We will recall the definition of these operators later. The result states that if $u$ is a quasimode of a Berezin-Toeplitz with symbol $f:\Cm^n\rightarrow \Rm$ and with eigenvalue $\lambda$, then for any open set $W\subset \Cm^n$ at positive distance to $f^{-1}(\lacc \lambda\racc)$

\[
\lnor u \rnor_{L^2(W)} = O(\hbar^{\infty}) \lnor u \rnor_{L^2}.
\]
This estimate can be proved using the symbolic calculus of Toeplitz operators, as developed in \cite{bord94} and \cite{char03a}. A proof can be found in \cite{dele16} Proposition 3.1.

A possible way to compare the speed at which quasimodes concentrate is to bound their $L^p$ norms with respect to their $L^2$ norms for $2<p\le\infty$. This topic is actively searched since the 1980’s. First, Sogge \cite{sogg88} proved a result for self-adjoint second-order elliptic operators on smooth connected compact manifolds of dimension $n\ge 2$. If $P$ is such an operator, and if we denote by $\chi_k$ the projector on the space spanned by the eigenfunctions whose eigenvalues $\lambda$ satisfy $k-1 \le \sqrt{|\lambda|}<k$, then for all $u\in L^2(M)$,

\begin{align}
\label{eq_maj_Sogge}
\lnor \chi_k u \rnor_{L^{2\frac{n+1}{n-1}}(M)} & \le C k^{\frac{n-1}{2(n+1)}} \lnor u \rnor_{L^2(M)},\\
\lnor \chi_k u \rnor_{L^{\infty}(M)} & \le C k^{\frac{n-1}{2}} \lnor u \rnor_{L^2(M)}. \nonumber
\end{align}

Then, $L^p$-$L^q$ interpolation gives a bound for any $L^p$ norm. Namely, we have

\begin{equation}
\label{eq_maj_inter}
\lnor \chi_k u \rnor_{L^p(M)} \le C k^{\rho\lpar\frac{1}{p}\rpar} \lnor u \rnor_{L^2(M)},
\end{equation}

with $\rho$ given by the red curve in Figure \ref{fig_Lp}.

\begin{figure}[h]
\centering
\begin{tikzpicture}
\draw (-4,-0.8)--(-4,-1.2) node[black, below]{$0$};
\draw (-3.8,-1) -- (-4.2,-1) node[black, left]{$0$};
\draw[black,very thick,->] (-4,-1) -- (2,-1) node[black, right]{$\frac{1}{p}$};
\draw[black,very thick,->] (-4,-1) -- (-4,5) node[black, left]{$\rho$};
\draw (-3.8,4) -- (-4.2,4) node[black, left]{$\frac{n-2}{2}$};
\draw[red] (-4,4) -- (-2,1);
\draw (-2,-0.8) -- (-2,-1.2) node[black, below]{$\frac{n-1}{2(n+1)}$};
\draw[dashed] (-2,1) -- (-2,-1);
\draw (-3.8,1) -- (-4.2,1) node[black, left]{$\frac{n-1}{2(n+1)}$};
\draw[dashed] (-2,1) -- (-4,1);
\draw[red] (-2,1) -- (1,-1);
\draw (1,-0.8)--(1,-1.2) node[black, below]{$\frac{1}{2}$};
\draw[dashed,blue] (-4,4) -- (1,-1);

\matrix [draw,below left] at (current bounding box.north east) {
	\draw[red] (-0.5,0) -- (0,0) node[right,color=black]{Sharp estimate}; \\
	\draw[dashed,blue] (-0.5,0) -- (0,0) node[right,color=black]{$L^2$-$L^{\infty}$ interpolation};\\
};

\end{tikzpicture}
\caption{The exponent $\rho$ as a function of $\frac{1}{p}$.}
\label{fig_Lp}
\end{figure}
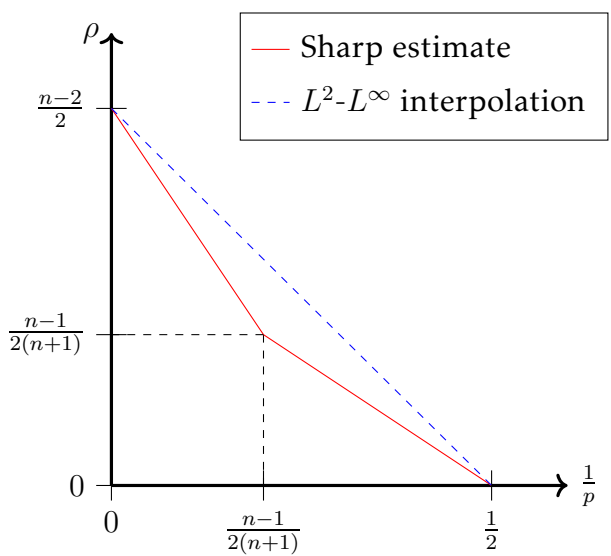

In Figure \ref{fig_Lp}, the dashed blue line corresponds to the exponent given by $L^p$-$L^q$ interpolation if we consider only the first line in equation \eqref{eq_maj_Sogge}, but it is less precise than the red line. Actually, equation \eqref{eq_maj_inter} cannot be improved, as Sogge identified that the bound becomes an asymptotic equivalence for specific spherical harmonics. We say that the estimate is sharp. In general, concentration results give estimates of the form of equation \eqref{eq_maj_inter} with affine by parts exponents. From any $L^{\infty}$ estimate, it is easy to get an affine exponent by $L^p$-$L^q$ interpolation, but the sharp exponents are more difficult to obtain as they may depend on caustics. For instance, there are two families of spherical harmonics for the Laplace-Beltrami operator that saturate equation \eqref{eq_maj_inter}. They saturate the left and right affine parts of the red line in Figure \ref{fig_Lp}, respectively.

Since then, a lot of results generalised these estimates in many cases with different caustics, and thus different exponents. Koch, Tataru \cite{koch05} proved similar bounds for the Hermite operator $-\Delta+x^2$ in $\Rm^n$ with $n\ge 3$, and then for more general operators, motivated by strong unique continuation for parabolic equations. Koch, Tataru and Zworski \cite{koch07} obtained microlocal quasimdodes $L^p$ bounds for a larger class of operators. In these examples, the exponent is always an affine by parts function, and we expect a different saturating family of eigenfunction on each affine part.

However, to the author's knowledge, such estimates are yet to be proved for Berezin-Toeplitz operators. The closest result is in \cite{chan22}, and \cite{paol24} for operators on boundaries of Grauert tubes of Riemanian manifolds, since they also consider wave functions defined on the phase space.  Such frameworks have the advantage that there is no caustic since the wave functions are defined on the phase space,  and the estimates are then simpler.

This article focuses on sharp $L^p$ norm estimates of quasimodes of Berezin-Toeplitz operators in the semiclassical regime. We recall the construction of these operators on $\Cm^d$, and their link with pseudodifferential operators. First, we recall the definition of the wave front set, which is the set where a function concentrates, in the sense we saw earlier. For $u_{\hbar}\in L^2(\Rm^n)$, the wave front set $\wfs_{\hbar}(u_{\hbar})$ of $u$ is the complement of points $(x,\xi)$ for which there exists $a\in C^{\infty}_c(\Rm^{2n})$, with $a(x,\xi)>0$, and

\[
\lnor Op_{\hbar}^W(a)u_{\hbar} \rnor_{L^2(\Rm^n)} = O(\hbar^{\infty}).
\]
This set is linked to the FBI transform $\fbi$, which we will use to define the Berezin-Toeplitz operators, see Proposition \ref{prop_link_pseudo_toeplitz}. Let $u_{\hbar}\in L^2(\Rm^n)$, then $(x,\xi)\notin\wfs_{\hbar}(u)$ if and only if there exists a neighbourhood $W$ of $(x,\xi)$ in $\Rm^{2d}$ such that

\[
\lnor \fbi u_{\hbar} \rnor_{L^2(W)} = O(\hbar^{\infty}).
\]
See, for example, \cite{zwor12} Theorem 13.14. In fact, $\fbi$ is an isometry from $L^2(\Rm^n)$ to the Hilbert space

\[
\fbarg = L^2(\Cm^n)\cap \lacc e^{-\frac{N|z|^2}{2}}f \;/\; f\text{ is holomorphic} \racc,
\]
equipped with the norm of $L^2(\Cm^n)$, called the Bargmann space. Here we denote $N=\frac{1}{\hbar}$. Actually, $\fbarg$ is the space of states on which we define Berezin-Toeplitz operators. Denoting $\Pi_N$ the orthogonal projection from $L^2(\Cm^n)$ to $\fbarg$, for all $f$ in a good symbol space, the Berezin-Toeplitz operator with symbol $f$ is
\begin{align}
\label{eq_def_intro}
T_N(f) : D & \rightarrow \fbarg\\
			u & \mapsto \Pi_N(fu),\nonumber
\end{align}
with $D$ its domain, see Section \ref{sec_Fock}. Moreover, these operators are linked to pseudodifferential operators through the FBI transform. For such an operator, there exists a function $a_{\hbar}$ such that

\[
Op_{\hbar}^W(a_{\hbar}) = \fbi^* T_N(f) \fbi,
\]
see Proposition \ref{prop_link_pseudo_toeplitz}. Berezin-Toeplitz operators can also be constructed on a compact manifold with a definition akin to \eqref{eq_def_intro} where $\fbarg$ is replaced by a suitable subspace of holomorphic objects on the manifold, see Section \ref{sec_quantum} or \cite{bord94}.

With these definitions, the main result of this article is the following:

\begin{theorem}
\label{th_intro}
Let $n\in\Nm$, and $M$ be either $\Cm^n$ or a compact, quantizable, Kähler manifold of dimension $n$. Let $f\in C^{\infty}(M,\Rm)$, if $M=\Cm^n$, we suppose that $f$ and all its derivatives grow at most polynomially, and that $|f(z)|\xrightarrow[|z|\rightarrow +\infty]{} +\infty$. Let $E$ be a regular value of $f$: $f(z)=E$ implies $df(z)\neq 0$. Let $\mu_N$ be a sequence of numbers such that $\mu_N \xrightarrow[N\rightarrow +\infty]{} E$, and $V_N$ a sequence of associated quasimodes
\[
T_N(f) V_N = \mu_N V_N + O_{L^2(M)}(N^{-\infty})
\]
with unit $L^2$ norms. Then for all $N\in\mathbb{N}$ and $p\in[2,+\infty]$
\begin{equation}
\label{eq_res_intro}
\|V_N\|_{L^p(M)} = O\left(N^{\left(n-\frac{1}{2}\right)\left(\frac{1}{2}-\frac{1}{p}\right)}\right).
\end{equation}
\end{theorem}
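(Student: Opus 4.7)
The exponent $(n-\tfrac{1}{2})(\tfrac{1}{2}-\tfrac{1}{p})$ in \eqref{eq_res_intro} is precisely what $L^2$--$L^\infty$ interpolation produces from the endpoint bound $\|V_N\|_{L^\infty}=O(N^{(2n-1)/4})$ together with $\|V_N\|_{L^2}=1$; moreover, unlike the Sogge exponent for the Laplacian on a compact Riemannian manifold, this exponent is affine on the whole interval $1/p\in[0,1/2]$, since the Berezin--Toeplitz wave functions live on phase space itself and no projection step introduces a caustic transition between two saturating families. The plan is therefore to prove the endpoint
\[\|V_N\|_{L^\infty(M)}\le C\,N^{(2n-1)/4}\]
and to deduce \eqref{eq_res_intro} for $p\in(2,\infty)$ by the log-convexity of $L^p$-norms.

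For the endpoint, I would combine the pointwise reproducing-kernel representation $V_N(z_0)=\int K_N(z_0,w)V_N(w)\,dw$ (where $K_N$ denotes the kernel of $\Pi_N$) with a sharp spectral localisation of $V_N$ on the regular level set $\{f=E\}$. By the spectral theorem for the self-adjoint operator $T_N(f)$ and the quasimode hypothesis, for any polynomially decaying $\epsilon_N$ one has $V_N=\mathbf{1}_{[E-\epsilon_N,E+\epsilon_N]}(T_N(f))V_N+O_{L^2}(N^{-\infty})$, and the universal pointwise bound $\|\cdot\|_{L^\infty}\le CN^{n/2}\|\cdot\|_{L^2}$ in $\fbarg$ (equivalently on holomorphic sections in the compact K\"ahler case, via the Ma--Marinescu / Charles on-diagonal Bergman asymptotic) makes the $N^{-\infty}$ remainder also negligible in $L^\infty$. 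Taking $\epsilon_N=CN^{-1/2}$ and smoothing the indicator, $V_N=\chi_N(T_N(f))V_N+O(N^{-\infty})$ with $\chi_N(x)=\chi(N^{1/2}(x-E))$ for a fixed $\chi\in C^\infty_c(\Rm)$ equal to $1$ near the origin.

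I would then analyse $\chi_N(T_N(f))$ through its Fourier representation
\[\chi_N(T_N(f))=\frac{1}{2\pi}\int\hat{\chi}(\tau)e^{i\tau N^{1/2}E}e^{-i\tau N^{1/2}T_N(f)}\,d\tau,\]
in which the short-time semiclassical propagator $e^{-i\tau N^{1/2}T_N(f)}=e^{-i(\tau/\sqrt N)\cdot NT_N(f)}$ is a Berezin--Toeplitz Fourier integral operator whose canonical relation is the Hamiltonian flow of $f$ at classical time $\tau/\sqrt N$, small for $\tau$ in the compact support of $\hat\chi$. Using a Bochner-coordinate expansion of the Bergman kernel around the diagonal together with the Berezin--Toeplitz Egorov theorem, one obtains a uniform representation of the propagator kernel; stationary phase in the variable $\tau$ at the critical set $\{f=E\}$---nondegenerate because $E$ is a regular value---should extract an extra factor $N^{-1/4}$ beyond the universal Bergman bound $N^{n/2}$, producing the required endpoint bound at $z_0\in f^{-1}(E)$.

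The main obstacle is the sharp functional-calculus and propagator analysis at the borderline semiclassical scale $N^{-1/2}$. The standard Berezin--Toeplitz calculus provides $\chi(T_N(f))=T_N(\chi(f))+O(\hbar)$ for a fixed $\chi\in C^\infty_c(\Rm)$, but $\chi_N$ has derivatives of size $N^{k/2}$, making the naive symbolic remainders of the same order as the leading term. The plan to circumvent this is either to construct a parametrix for $\chi_N(T_N(f))$ directly from the short-time propagator on the fixed time interval $\tau\in\supp\hat\chi$---where the BT Fourier integral symbol class remains uniform in $N$---or to dyadically decompose $\chi_N$ at dyadic semiclassical scales and sum the resulting bounds. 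The assumption $df\neq 0$ on $f^{-1}(E)$ is essential to keep the transverse stationary phase nondegenerate and so to actually gain the extra $N^{-1/4}$. The argument is local around each point of $f^{-1}(E)$ and applies identically in both geometric settings thanks to the common near-diagonal Gaussian profile of the Bergman kernel; compactness of $f^{-1}(E)$, required to combine local estimates into a uniform global one, is automatic on a compact K\"ahler manifold and forced in the $\Cm^n$ case by the coercivity assumption $|f(z)|\to+\infty$.
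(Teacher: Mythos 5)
Your high-level plan---prove the $L^\infty$ endpoint and interpolate, via a Fourier/propagator representation of a spectral cutoff---is consistent with the paper's strategy, but the spectral localisation scale you pick is too coarse and destroys the gain. You set $\epsilon_N=CN^{-1/2}$ and study $\chi_N(T_N(f))$ with $\chi_N(x)=\chi(N^{1/2}(x-E))$. The quantity controlling $\|V_N\|_{L^\infty}$ is the on-diagonal kernel $\chi_N(T_N(f))(x,x)$, and on $f^{-1}(E)$ this is of order $N^n$, not $N^{n-\frac{1}{2}}$. Concretely, take $n=1$, the harmonic oscillator $T_N(|z|^2)$ on $\fbarg$, $E=1$, $|z_0|=1$: then $\chi_N(T_N)(z_0,z_0)=\sum_k\chi(N^{1/2}(k/N-1))\,|e_k(z_0)|^2$ sums roughly $N^{1/2}$ terms, each of size $|e_k(z_0)|^2=\frac{N^{k+1}e^{-N}}{\pi k!}\sim N^{1/2}$ for $|k-N|\lesssim N^{1/2}$, and the total is $\sim N=N^n$. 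Your hoped-for ``stationary phase in $\tau$ gaining $N^{-1/4}$'' does not materialise: in
\[
\chi_N(T_N)(z_0,z_0)=\frac{1}{2\pi}\int\hat\chi(\tau)\,e^{-iN^{1/2}E\tau}\,\big[e^{iN^{1/2}\tau T_N}\big](z_0,z_0)\,d\tau,
\]
the propagator contributes a phase $e^{iN^{1/2}\tau H(z_0)}$ which exactly cancels $e^{-iN^{1/2}E\tau}$ on $f^{-1}(E)$, leaving a $\tau$-integral with no oscillation, only an $O(1)$-width Gaussian from the off-diagonal decay of the Bergman kernel. There is nothing to gain, and your plan yields only the trivial $\|V_N\|_{L^\infty}=O(N^{n/2})$.

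The hypothesis $T_N V_N=\mu_N V_N+O_{L^2}(N^{-\infty})$ is far stronger than $N^{-1/2}$-localisation: it concentrates the spectral measure of $V_N$ at scale $O(N^{-1})$ around $E$ (as you yourself note, $\epsilon_N$ can be taken polynomially small), and this is the scale one must use. The paper works with $\rho(N(E-T_N))$ for $\rho$ Schwartz, $\rho\geq\chi_{[-C,C]}$, $\hat\rho\in C^\infty_c$, i.e.\ with the propagator $e^{iNtT_N}$ at semiclassical times $t$ of order $1$, not $N^{-1/2}$. On the diagonal at $z_0\in f^{-1}(E)$, the propagator kernel has modulus $\sim N^n e^{-cNt^2|X_H(z_0)|^2}$: the flow displaces $z_0$ by $\sim t|X_H(z_0)|$ and the near-diagonal Bergman kernel decays at scale $N^{-1/2}$. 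Integrating this over the fixed compact time interval $\supp\hat\rho$ gives the factor $N^{-1/2}$, hence $\rho(N(E-T_N))(z_0,z_0)=O(N^{n-\frac12})$ (this is the content of Theorem~\ref{corprincipal} via the Charles--Le Floch Lagrangian-state propagator and Proposition~\ref{prop_quantum_prop}). The gain is a Gaussian, non-stationary-phase gain of $N^{-1/2}$, and regularity of $E$ enters through $X_H(z_0)\neq 0$, not through a nondegenerate $\tau$-critical point. Beyond this main gap, two secondary points: the paper also requires a Tauberian step (Lemma~\ref{lem_dist_eigen}) to pass from $\rho$ back to the sharp projector, and it does not run the argument ``identically'' on $\Cm^n$---instead it truncates $V_N$ to compact support using an elliptic parametrix (Corollary~\ref{cor_elliptic_inverse}), periodises the symbol, and invokes the compact-torus case.
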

Furthermore, using the link between Toeplitz and pseudodifferential operators, we prove a similar bound on the FBI transform of quasimodes of pseudodifferential operators. The estimates \eqref{eq_res_intro} are sharp as we can find operators for which the bound becomes an asymptotic equivalence as $N\to\infty$ on $\Cm^n$, see Corollary \ref{cor_harmonic}, and on the complex projective space of dimension $n$, see Theorem \ref{th_optim_proj}.

As in the case of $L^p$ bounds for the Laplace-Beltrami operator's quasimodes, the examples saturating \eqref{eq_res_intro} involve a completely periodic flow on the energy shell. It would certainly be interesting to improve the bounds of Theorem \ref{th_intro} under opposite dynamical assumptions such as Anosov flows. On this topic, some references give pointwise Weyl laws, which imply concentration estimates since they consist of bounding $\sum_{j\in J} |e_j(x)|^2$ for $(e_j)_{j\in J}$ a finite number of eigenfunctions of the operator. We can cite Bérard \cite{bera77} who proved a pointwise Weyl law for the Laplace operator on a compact manifold $M$ with non-positive sectional curvature. Bonthonneau \cite{bont17} generalised this method for non-compact manifold with cusps. Besides, Canzani and Galkowski \cite{canz23} studied the eigenfunctions of the Laplace operator on a compact manifold without boundary. They proved estimates of $L^p$ norms over open sets with respect to $L^2$ norms and terms depending on geodesic flows. Duistermaat and Guillemin \cite{duis75} and Safarov and Vassilev \cite{safa97} proved a pointwise Weyl law for positive elliptic self-adjoint operators on compact manifolds without boundary, with a non-periodic hamiltonian flow hypothesis.

The article will be organised as follows, in Section \ref{sec_Fock} we recall the construction of the Bargmann space of dimension $n$. We also highlight a Hilbert basis $(e_{\alpha})_{\alpha\in\Nm^n}$ which will be used later.  In a second time we remind the Toeplitz quantisation on this space with a class of continuous symbols. Then, as an introduction to concentration estimates of quasimodes of Toeplitz operators, we study the harmonic oscillator. In this quantisation, this operator has purely discrete spectrum given by

\[
\lacc \frac{k+n}{N} /\; k\in\Nm\racc,
\]
and for $k\in\Nm$ the space of eigenfunctions with eigenvalue $\frac{k+n}{N}$ has basis $(e_{\alpha})_{|\alpha|=k}$. In that sense, the functions $e_{\alpha}$ play the same role as the Hermite functions on $\Rm^n$.

In Section \ref{sec_quantum}, we consider Toeplitz operators on Kähler manifolds, and the main purpose is to describe their quantum propagator, which is the main ingredient to prove Theorem \ref{th_intro}. To that end, we describe how Toeplitz operators can be written as Lagrangian states, as developed by Charles \cite{char03}. Finally, we give a result by Charles and Le Floch \cite{char21} that writes the quantum propagator in term of Lagrangian states.

In Section \ref{sec_Lp}, using the Lagrangian states description and another result from \cite{char21}, we give a general expression of spectral projectors of Toeplitz operators, which are operators of the form $\rho\lpar N\lpar E-T_N(f)\rpar\rpar$ with $\rho\in C^{\infty}_c(\Cm)$ and $E\in\Rm$. We use this expression to prove the $L^{\infty}$ estimate of quasimodes of Toeplitz operators on Kähler manifolds. The estimate \eqref{eq_res_intro} is then obtained by $L^p$-$L^q$ interpolation. Furthermore, we prove the sharpness of the Theorem by providing examples in the projective space that saturate \eqref{eq_res_intro}. We insist on the fact that, due to the absence of caustic, this result is simpler than the ones we cited before for pseudodifferantial operators. In \cite{sogg88} and \cite{koch05}, we saw that the estimates obtained by $L^p$-$L^q$ interpolation were not sharp, as shown in Figure \ref{fig_Lp}.

Finally, we use the symbolic calculus for Toeplitz operators on $\Cm$ to express the link between their quasimodes and quasimodes of Toeplitz operators on the torus. This enables us to prove Theorem \ref{th_intro} for flat spaces. Finally, we recall how the FBI transform relates Toeplitz operators on flat spaces to pseudodifferential operators to get a partial estimate on pseudifferential operators.

\subsection*{Notations}

In the next sections, we work on $\mathbb{R}^n$ or $\mathbb{C}^n$ depending on the situation, each with the associated Euclidean and Hermitian norm respectively. Thus, we fix the following convention on notations, for all $(x,y)\in\mathbb{R}^n$, $z\in \mathbb{C}^n, \alpha\in \mathbb{N}^n$

\begin{itemize}
\item $x\cdot y = \sum_{1\le j\le n} x_j y_j,$
\item $(x-y)^2=(x-y)\cdot(x-y),$
\item $|z|^2=z\cdot \overline{z},$
\item $z^2=z\cdot z,$
\item $z^\alpha = \prod_{1\le j\le n} z_j^{\alpha_j}.$
\end{itemize}

We use the following notations for functions spaces, for all $(f,g)\in L^2(\Rm^n)$, $\langle f,g\rangle = \int_{\Rm^n}\overline{f}(x)g(x)dx,$, while for $(f,g)\in S'(\Rm^n)\times S(\Rm^n)$ we will denote by $(f,g)$ the bilinear duality product.

\subsection*{Acknowledgements}

This work was supported by the ANR-24-CE40-5905-01 “STENTOR” project. The author thanks Alix Deleporte and San Vũ Ng\d{o}c for their guidance throughout this project and their proof-readings.

\section{Elementary study on the flat space}
\label{sec_Fock}

In this section, we first recall the construction of the Bargmann space, as well as Toeplitz operators on this space. Then we will study the $L^p$ norms of a basis of eigenfunctions of the Berezin-Toeplitz harmonic oscillator, and in particular we give a general upper bound that becomes an equivalence for a subfamily. In fact, we will see in Section \ref{sec_Lp} that the bound we get in this example applies for quasimodes of a large class of Toeplitz operators on $\Cm^n$.

\subsection{Bargmann space}

The construction of Berezin-Toeplitz operators on $\Cm^n$ relies on the definition of the Hilbert space of state functions; namely, the Bargmann or Segal-Bargmann space. This space first appeared in an article by Fock \cite{fock28}, as the space of functions in which lies the solution to the Dirac statistical equation. Later, Bargmann \cite{barg61} made a mathematical description of this space. Segal \cite{sega63}  used it in the context of representations in the free fields theory and the link between this space and microlocal analysis was established by Folland \cite{foll89} (see also \cite{mart02}) with a description using the space of square integrable functions on $\Rm^n$ and wave packets.

We recall the essential definitions and results here. For the sake of completeness we wrote a detailed construction of the Bargmann space in Section \ref{sec_constr} based on the works of Bargmann \cite{barg61}, Folland \cite{foll89}, Martinez \cite{mart02}, and Zworski \cite{zwor12}.

\begin{definition}
For $N\in\Nm\backslash\{0\}$ the Bargmann space is given by
\[
\fbarg = L^2(\Cm^n)\cap \lacc e^{-\frac{N|z|^2}{2}}f \;/\; f\text{ is holomorphic} \racc,
\]
equipped with the usual $L^2$ scalar product, associated with the Lebesgue measure on $\Rm^{2n}$.
\end{definition}

\begin{proposition}[\cite{barg61} Chapter 1.b]
\label{prop_base}
The family $\left(e_{\alpha} = \frac{N^{\frac{n+|\alpha|}{2}}e^{-\frac{N|z|^2}{2}}z^{\alpha}}{\pi^{\frac{n}{2}}\sqrt{\alpha !}}\right)_{\alpha \in \mathbb{N}^n}$ is a Hilbert basis of $\fbarg$.
\end{proposition}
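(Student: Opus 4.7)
The plan is to proceed in two classical steps: first verify that the family is orthonormal in $\fbarg$, and then establish completeness by showing that the only element orthogonal to every $e_\alpha$ is zero.

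For orthonormality, I would compute $\langle e_\alpha, e_\beta\rangle$ directly using polar coordinates $z_j = r_j e^{i\theta_j}$. Because the weight $e^{-N|z|^2}$ and the monomials $z^\alpha$ both split as products over the coordinates, the integral factorises, yielding
\[
\langle e_\alpha, e_\beta\rangle = \frac{N^{n+\frac{|\alpha|+|\beta|}{2}}}{\pi^n \sqrt{\alpha!\,\beta!}} \prod_{j=1}^n \int_0^\infty \int_0^{2\pi} r_j^{\alpha_j + \beta_j + 1} e^{-N r_j^2} e^{i(\beta_j - \alpha_j)\theta_j}\, d\theta_j\, dr_j.
\]
The angular integrals give $2\pi\,\delta_{\alpha_j\beta_j}$, so only $\alpha = \beta$ contributes; the radial integrals reduce, after the substitution $s = N r_j^2$, to a product of Gamma functions $\Gamma(\alpha_j + 1) = \alpha_j!$. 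The prefactor $\frac{N^{(n+|\alpha|)/2}}{\pi^{n/2}\sqrt{\alpha!}}$ was designed precisely so that $\|e_\alpha\|_{L^2} = 1$.

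For completeness, I would take $u = e^{-N|z|^2/2} f \in \fbarg$ with $f$ entire and assume $\langle u, e_\alpha\rangle = 0$ for every $\alpha \in \Nm^n$, i.e.
\[
\int_{\Cm^n} \overline{f(z)}\, z^\alpha\, e^{-N|z|^2}\, dz = 0 \quad\text{for all } \alpha \in \Nm^n,
\]
and try to conclude $f \equiv 0$. The natural idea is to insert the Taylor series $f(z) = \sum_\beta c_\beta z^\beta$, use the radial integration to isolate $\beta = \alpha$ as above, and read off $c_\alpha = 0$.

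The main obstacle is the exchange of summation and integration: the Taylor series converges uniformly on compact sets but not on $\Cm^n$. I would handle this by a truncation argument, writing the integral as the sum of one over $\{|z| \le R\}$ (where uniform convergence allows swapping sum and integral, giving a nonzero contribution only from the $\beta = \alpha$ term, essentially $C_\alpha \overline{c_\alpha}$) and a tail over $\{|z| > R\}$. The tail is bounded by Cauchy–Schwarz as $\|u\|_{L^2(|z|>R)} \cdot \|z^\alpha e^{-N|z|^2/2}\|_{L^2(|z|>R)}$; both factors tend to $0$ as $R \to \infty$ since $u \in L^2$ and the weighted monomial belongs to $L^2(\Cm^n)$. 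Passing to the limit yields $c_\alpha = 0$ for every $\alpha$, hence $f \equiv 0$. Combining orthonormality with this density statement proves that $(e_\alpha)_{\alpha \in \Nm^n}$ is a Hilbert basis of $\fbarg$.
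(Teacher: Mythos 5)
Your orthonormality computation is identical to the paper's. For totality, you take the standard but genuinely different route of showing the orthocomplement is trivial, whereas the paper verifies Parseval's identity directly: it defines the truncated norm $M(\sigma)=\int_{|z|<\sigma}|f|^2e^{-N|z|^2}$, expands $f$ in its Taylor series, uses that the angular integrals kill the cross terms, and then invokes monotone convergence on the resulting series of nonnegative terms to show $\|e^{-N|\cdot|^2/2}f\|_{L^2}^2=\sum_\alpha|\langle e^{-N|\cdot|^2/2}f,e_\alpha\rangle|^2$. Your approach -- split $\int \overline{f}\,z^\alpha e^{-N|z|^2}$ into $\{|z|\le R\}$ (where the angular integrals isolate the $\beta=\alpha$ coefficient) plus a tail controlled by Cauchy--Schwarz against $\|u\|_{L^2(|z|>R)}$ and $\|z^\alpha e^{-N|z|^2/2}\|_{L^2(|z|>R)}$ -- is correct, and arguably handles the interchange of sum and integral more explicitly than the paper does when it asserts the value of $\langle e^{-N|\cdot|^2/2}f,e_\alpha\rangle$ ``by a similar computation.'' What the paper's route buys is Parseval's identity stated once for all $u\in\fbarg$; what yours buys is a more transparent justification of the single inner-product computation. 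Both are standard and sound.
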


\begin{proof}
Let $\alpha,\beta \in \mathbb{N}^n$, then $e_{\alpha},e_{\beta} \in \fbarg$ and with a change of variables to polar coordinates, we compute

\begin{align*}
\langle e^{-\frac{N|z|^2}{2}}z^{\alpha},e^{-\frac{N|z|^2}{2}}z^{\beta}\rangle & = \int_{\mathbb{C}^n} e^{-N|z|^2}\overline{z}^{\alpha}z^{\beta},\\
		& = \prod\limits_{1\le j\le n} \int_0^{+\infty}\int_0^{2\pi}e^{-Nr^2}z^{\alpha_j+\beta_j+1}e^{i\theta (\beta_j-\alpha_j)}d\theta dr,\\
		& = \prod\limits_{1\le j\le n} \int_0^{+\infty} e^{-Nr^2}z^{\alpha_j+\beta_j+1} dr \; 2\pi \delta_{\alpha_j,\beta_j},\\
		& = \prod\limits_{1\le j\le n} \frac{\alpha_j! \pi \delta_{\alpha_j,\beta_j}}{N^{\alpha_j+1}},\\
		& = \frac{\alpha! \pi^n \delta_{\alpha,\beta}}{N^{|\alpha |+n}}.
\end{align*}

The family is orthonormal in $\fbarg$, it remains to show that it is total. If $f$ is holomorphic on $\mathbb{C}^n$ then there exists $a_{\alpha} \in \mathbb{C}$ such that $f(z) = \sum\limits_{\alpha\in\mathbb{N}^n} a_{\alpha}z^{\alpha}$ uniformly on every compact of $\mathbb{C}^n$. We can now compute for $\sigma>0$ the truncated $L^2$ norm,

\begin{align*}
M(\sigma)   & = \int_{|z|<\sigma}|f(z)|^2e^{-N|z|^2}dz,\\
			& = \sum\limits_{\alpha ,\beta\in\mathbb{N}^n}\overline{a}_{\alpha}a_{\beta} \prod\limits_{1\le j\le n}\int_0^{\sigma}\int_0^{2\pi}e^{-Nr^2}r^{\alpha_j+\beta_j+1}e^{i\theta (\beta_j-\alpha_j)}d\theta dr,\\
			& = \sum\limits_{\alpha\in\mathbb{N}^n} \frac{|a_{\alpha}|^2\pi^n\alpha !\gamma_{\alpha}(\sigma)}{N^{|\alpha|+n}},
\end{align*}
where for all $\alpha\in\mathbb{N}^n$, the function $\gamma_{\alpha} : [0,\infty[ \mapsto [0,1[$ is continuous increasing and goes to $1$ as $\sigma \rightarrow +\infty$.

If $e^{-\frac{N|\cdot |^2}{2}}f\in\fbarg$, then $M(\sigma) \le \lnor e^{-\frac{N|\cdot |^2}{2}} f \rnor^2_{L^2(\mathbb{C}^n)} < +\infty$ so by monotone convergence $M(\sigma) \xrightarrow{\sigma \rightarrow +\infty} \sum\limits_{\alpha\in\mathbb{N}^n} \frac{|a_{\alpha}|^2\pi^n\alpha}{N^{|\alpha|+n}}$.
What's more, $M(\sigma ) \xrightarrow{\sigma \rightarrow +\infty} \|e^{-\frac{N|\cdot |^2}{2}}f\|^2_{L^2(\mathbb{C}^n)}$, and for $\alpha\in\mathbb{N}^n$ by a similar computation $\langle e^{-\frac{N|\cdot |^2}{2}}f,e_{\alpha}\rangle = \frac{\overline{a}_{\alpha}\pi^{\frac{n}{2}}\sqrt{\alpha !}}{N^{\frac{|\alpha | +n}{2}}}$, thus

\[
\lnor e^{-\frac{N|\cdot |^2}{2}}f \rnor^2_{L^2(\mathbb{C}^n)} = \sum\limits_{\alpha\in\mathbb{N}^n} \frac{|a_{\alpha}|^2\pi^n\alpha}{N^{|\alpha|+n}} = \sum\limits_{\alpha\in\mathbb{N}^n}|\langle e^{-\frac{N|\cdot |^2}{2}}f,e_{\alpha}\rangle|^2.
\]

The family is total in $\fbarg$, it is a Hilbert basis. 
\end{proof}

The orthogonal projection on $\fbarg$ is then given by

\[
\Pi_N f(z) = \sum\limits_{\alpha\in\Nm^n} e_{\alpha}(z) \langle e_{\alpha},f\rangle
\]
for $f\in L^2(\Cm^n)$, which gives the following formula after computation.
\begin{proposition}
The application
\begin{align*}
\Pi_N : L^2(\mathbb{C}^n) & \rightarrow \fbarg\\
f & \mapsto \lpar z \mapsto \lpar \frac{N}{\pi}\rpar^n \int_{\Cm^n} e^{-\frac{N|z|^2}{2}} e^{-\frac{N|w|^2}{2}} e^{Nz\cdot \overline{w}} f(w) dw \rpar
\end{align*}
is the orthogonal projection on $\fbarg$, called the Bergman projection.
\end{proposition}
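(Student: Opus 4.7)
The plan is to start from the spectral formula $\Pi_N f(z) = \sum_{\alpha\in\Nm^n} e_\alpha(z) \langle e_\alpha, f\rangle$ stated just before the proposition, substitute the explicit expression of the basis from Proposition \ref{prop_base}, and recognise the kernel as the generating function of the monomials $z^\alpha \overline{w}^\alpha / \alpha!$. Concretely, I would rewrite
\[
\Pi_N f(z) = \sum_{\alpha\in\Nm^n} \frac{N^{n+|\alpha|}}{\pi^n \alpha!} e^{-\frac{N|z|^2}{2}} z^\alpha \int_{\Cm^n} e^{-\frac{N|w|^2}{2}} \overline{w}^\alpha f(w)\,dw,
\]
move the $z$-dependent factors inside the integral, and identify the resulting candidate kernel
\[
K_N(z,w) = \frac{N^n}{\pi^n} e^{-\frac{N|z|^2}{2}} e^{-\frac{N|w|^2}{2}} \sum_{\alpha\in\Nm^n} \frac{N^{|\alpha|} z^\alpha \overline{w}^\alpha}{\alpha!}.
\]
The multi-index series factorises coordinate by coordinate, each factor being the scalar exponential $\sum_{k\ge 0} (N z_j \overline{w}_j)^k / k! = e^{N z_j \overline{w}_j}$, so the sum collapses to $e^{N z\cdot \overline{w}}$, which is precisely the formula claimed in the statement.

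The only delicate point is justifying the exchange of the infinite sum with the integral. My plan is to truncate: let $\Pi_N^{(K)}$ denote the partial sum restricted to $|\alpha|\le K$. Since $(e_\alpha)$ is a Hilbert basis, $\Pi_N^{(K)} f$ converges to $\Pi_N f$ in $L^2(\Cm^n)$, and after extraction pointwise almost everywhere. Each $\Pi_N^{(K)} f(z)$ coincides with $\int K_N^{(K)}(z,w) f(w)\,dw$ for the corresponding truncated kernel $K_N^{(K)}$, which converges pointwise to $K_N$ and satisfies the uniform bound
\[
|K_N^{(K)}(z,w)| \le \frac{N^n}{\pi^n} e^{-\frac{N|z|^2}{2}} e^{-\frac{N|w|^2}{2}} e^{N|z||w|} = \frac{N^n}{\pi^n} e^{-\frac{N(|z|-|w|)^2}{2}},
\]
since $|e^{N z\cdot\overline{w}}|\le e^{N|z||w|}$. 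The right-hand side, viewed as a function of $w$ at fixed $z$, lies in $L^2(\Cm^n)$, so for $f\in L^2(\Cm^n)$, Cauchy--Schwarz provides an integrable majorant $|K_N^{(K)}(z,w) f(w)|$ independent of $K$. Dominated convergence then gives $\int K_N^{(K)}(z,w) f(w)\,dw \to \int K_N(z,w) f(w)\,dw$, and matching the two limits finishes the proof.

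The main (and only) obstacle is this interchange step; everything else reduces to bookkeeping of Gaussian factors and the factorisation of the exponential series. The Gaussian bound $e^{-N(|z|-|w|)^2/2}$ on the Bergman kernel makes the dominated convergence argument essentially automatic, so the overall difficulty is low once the kernel has been identified.
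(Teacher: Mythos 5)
Your proof is correct and fills in precisely the "computation" that the paper leaves implicit: the paper merely notes that $\Pi_N f = \sum_\alpha e_\alpha \langle e_\alpha, f\rangle$ "gives the following formula after computation," and your expansion of the basis functions followed by the multi-index exponential-series identity $\sum_\alpha N^{|\alpha|} z^\alpha \overline{w}^\alpha / \alpha! = e^{N z\cdot\overline{w}}$ is exactly that computation. Your dominated-convergence justification of the sum/integral interchange, via the uniform Gaussian bound $|K_N^{(K)}(z,w)| \le \frac{N^n}{\pi^n}\, e^{-\frac{N(|z|-|w|)^2}{2}}$ and Cauchy--Schwarz, is a sound and standard way to make the argument rigorous; the minor caveat that the $L^2$ limit only matches the pointwise one a.e. is harmless since both sides are represented by continuous functions.
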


\subsection{Toeplitz quantization}

We can now review the Toeplitz quantization on $\Cm^n$. In order to fix the notations, we first define our space of symbols.

\begin{definition}
We call \emph{symbol} a $C^0$ function of at most polynomial growth at infinity. We write the symbol space
\[
S = \bigcup_{k\in\mathbb{N}}S_k,
\]
where for all $k\in\mathbb{N}$
\[
S_k = \lacc f\in C^0(\mathbb{C}^n) \; /\;\forall z\in\mathbb{C}^n \; |f(z)| \le (1+|z|)^k\racc.
\]
\end{definition}

Given $f\in S_k$, let us prove that $g\mapsto \Pi_N (fg)$ is well-defined on the following domain
\[
A_k = \lacc g\in\fbarg /\forall \alpha\in\mathbb{N}^n \; |\alpha |=k \Rightarrow z^{\alpha}g\in L^2\racc.
\]
First, we characterise these spaces with

\begin{lemma}
\label{lem_eq_symbol_space}
The $\mathbb{C}$-linear map $\Phi:\fbarg\rightarrow\ell^2(\Nm^n)$ defined on the Hilbert basis by
\[
\Phi(e_{\nu}) = \delta_{\nu}
\]
is an isometry, where $\delta_{\nu}$ is the element of $\ell^2(\Nm^n)$ with all the coefficients equal to 0 except the one of index $\nu$ which is equal to $1$. Furthermore, for all $k\in\mathbb{N}$
\[
\Phi(A_k) = \lacc b\in\ell^2(\mathbb{N}^n)/\forall \alpha\in\mathbb{N}^n \; |\alpha |=k \Rightarrow (\sqrt{n^{\alpha}}b_n)_{n\in\mathbb{N}^n}\in\ell^2(\mathbb{N}^n)\racc
\]
In particular we can write,
\begin{align*}
A_k &= \lacc g\in\fbarg /\forall \alpha\in\mathbb{N}^n \; |\alpha |\le k \Rightarrow z^{\alpha}g\in L^2\racc,\\
	&= \lacc g\in\fbarg /\forall j\in\mathbb{N} \; j\le k \Rightarrow |z|^jg\in L^2\racc.
\end{align*}
\end{lemma}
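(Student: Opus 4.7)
The plan is to view everything through the orthonormal basis $(e_\nu)_{\nu\in\Nm^n}$ of Proposition \ref{prop_base}. The isometry part is immediate: $\Phi$ sends the Hilbert basis $(e_\nu)$ to the canonical Hilbert basis $(\delta_\nu)$ of $\ell^2(\Nm^n)$, so by density it extends to a unitary isomorphism, and the general formula $\Phi(\sum b_\nu e_\nu) = (b_\nu)$ makes the norm identity $\|g\|_{L^2}^2 = \sum |b_\nu|^2$ tautological. The real content is the characterization of $\Phi(A_k)$.

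The key computation is $\|z^\alpha e_\nu\|_{L^2(\Cm^n)}^2$. The same polar-coordinate argument as in the proof of Proposition \ref{prop_base} — angular integration produces a Kronecker $\delta$ in each complex variable, radial integration uses $\int_0^{+\infty} e^{-Nr^2} r^{2m+1}\,dr = \tfrac{m!}{2N^{m+1}}$ — gives
\[
\langle z^\alpha e_\mu, z^\alpha e_\nu\rangle = \delta_{\mu,\nu}\,\frac{(\nu+\alpha)!}{\nu!\,N^{|\alpha|}},
\]
where $(\nu+\alpha)! = \prod_j(\nu_j+\alpha_j)!$. In particular the family $(z^\alpha e_\nu)_{\nu\in\Nm^n}$ is orthogonal in $L^2(\Cm^n)$ for each fixed $\alpha$. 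For $g = \sum b_\nu e_\nu \in \fbarg$, truncating the sum to $|\nu|\le M$ and using this orthogonality shows that the partial sums of $z^\alpha g$ form a Cauchy sequence in $L^2$ exactly when $\sum_\nu |b_\nu|^2 \frac{(\nu+\alpha)!}{\nu!}$ converges; passing to a subsequence that converges almost everywhere identifies the $L^2$-limit with $z^\alpha g$. Thus
\[
z^\alpha g \in L^2(\Cm^n) \iff \sum_{\nu\in\Nm^n} |b_\nu|^2 \,\frac{(\nu+\alpha)!}{\nu!} < +\infty.
\]

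To match this with the sequence space in the statement, I would use the two-sided comparison
\[
\nu^\alpha \le \frac{(\nu+\alpha)!}{\nu!} = \prod_j (\nu_j+1)(\nu_j+2)\cdots(\nu_j+\alpha_j) \le \sum_{\beta\le\alpha} c^{\alpha}_{\beta}\, \nu^\beta,
\]
with constants $c^\alpha_\beta \ge 0$ and the convention $\nu^0 = 1$. The lower bound gives immediately that $\sum |b_\nu|^2 (\nu+\alpha)!/\nu! < \infty$ implies $\sum \nu^\alpha |b_\nu|^2 < \infty$. For the converse, the lower-order monomials $\nu^\beta$ with $\beta\le\alpha$ but $\beta\neq\alpha$ need to be controlled: since $b\in\ell^2$ and, for each such $\beta$, one can pick some $\alpha'$ with $|\alpha'|=k$ and $\alpha'\ge \beta$ componentwise, the bound $\nu^\beta \le 1 + \nu^{\alpha'}$ reduces everything to the top-degree terms varying over all $|\alpha'|=k$ (together with the automatic $\ell^2$ condition). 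This is the step that requires the most care; I would write it out by expanding $\prod_j(\nu_j+\alpha_j)^{\alpha_j}$ via binomial expansion and handling each resulting monomial.

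Finally, the two alternative descriptions of $A_k$ follow from the same circle of ideas. The identity $|z|^{2k} = (\sum_j |z_j|^2)^k = \sum_{|\alpha|=k}\binom{k}{\alpha}|z^\alpha|^2$ turns $\||z|^k g\|_{L^2}^2$ into a positive linear combination of $\|z^\alpha g\|_{L^2}^2$ over $|\alpha|=k$, so $|z|^k g \in L^2$ is equivalent to $z^\alpha g \in L^2$ for every $|\alpha|=k$. And the elementary pointwise bounds $|z^\beta| \le |z|^{|\beta|} \le 1 + |z|^k$ for $|\beta| \le k$, combined with $g \in L^2$, show that $|z|^k g \in L^2$ implies $z^\beta g \in L^2$ for all such $\beta$, and similarly for $|z|^j g$ with $j \le k$. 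Stringing these equivalences together gives the three stated characterizations.
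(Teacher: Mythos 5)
Your approach is essentially the paper's: both go through the Hilbert basis $(e_\nu)$, compute how multiplication by $z^\alpha$ acts, reduce the membership $z^\alpha g\in L^2$ to a weighted $\ell^2$ condition on the coefficients $b_\nu$, and then compare $\frac{(\nu+\alpha)!}{\nu!}$ with $\nu^\alpha$. The paper reads everything off the shift identity $z^\alpha e_\nu = N^{-|\alpha|/2}\sqrt{\frac{(\nu+\alpha)!}{\nu!}}\,e_{\nu+\alpha}$, so the orthogonality and norm are immediate from orthonormality of $(e_{\nu+\alpha})_\nu$; you re-derive the same inner products by polar coordinates, which is the same computation one layer down.

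One step needs tightening. The bound $\nu^\beta \le 1 + \nu^{\alpha'}$ for a multi-index $\alpha'\ge\beta$ with $|\alpha'|=k$ is not true for an arbitrary such $\alpha'$: take $n=2$, $\beta=(1,0)$, $\alpha'=(1,1)$, $\nu=(10,0)$; then $\nu^\beta=10 > 1 = 1 + \nu^{\alpha'}$. The issue is that $\nu^{\alpha'}$ can vanish on a coordinate hyperplane on which $\nu^\beta$ does not. The fix is to choose $\alpha'$ so that $\supp(\alpha'-\beta)\subseteq\supp(\beta)$: for $\beta\ne 0$ pick $j$ with $\beta_j>0$ and set $\alpha'=\beta+(k-|\beta|)e_j$; then when $\nu_j\ge 1$ one has $\nu^{\alpha'}\ge\nu^\beta$, and when $\nu_j=0$ one has $\nu^\beta=0$, so $\nu^\beta\le 1+\nu^{\alpha'}$ holds; the case $\beta=0$ is just $b\in\ell^2$. (Equivalently, one can verify $\nu^\beta\le 1+\sum_{|\alpha'|=k}\nu^{\alpha'}$ for all $\nu$.) For what it is worth, the paper is also terse at this exact point — it cites the asymptotic $\sqrt{(\nu+\alpha)!/\nu!}\sim\sqrt{\nu^\alpha}$ as $|\nu|\to\infty$, which, as stated, only holds when every coordinate $\nu_j\to\infty$ — so your instinct to spell this out was correct; it just needs the right choice of dominating multi-index.
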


\begin{proof}
$\Phi$ sends the Hilbert basis of $\fbarg$ on the one of $\ell^2(\mathbb{N}^n)$, which characterise an isometry.
What's more, for all $\alpha,\nu\in\mathbb{N}^n$,
\[
z^{\alpha}e_{\nu} = N^{-\frac{|\alpha |}{2}}\sqrt{\frac{(\nu +\alpha)!}{\nu!}}e_{\nu +\alpha},
\]
with
\[
\sqrt{\frac{(\nu +\alpha)!}{\nu!}} \underset{|\nu|\to +\infty}{\sim} \sqrt{\nu^{\alpha}}.
\]
So for all $f\in\fbarg$ by decomposing it in the Hilbert basis $f = \sum\limits_{\nu\in\mathbb{N}^n} b_{\nu}e_{\nu}$ with $b=\Phi(f)$, we get that for all $\alpha\in\mathbb{N}^n$ with $|\alpha| \ge k$
\begin{align*}
f\in A_k & \Leftrightarrow \sum\limits_{\nu\in\mathbb{N}^n} b_{\nu} N^{-\frac{|\alpha |}{2}}\sqrt{\frac{(\nu +\alpha)!}{\nu!}} e_{\nu+\alpha} \in L^2(\mathbb{C}^n),\\
		& \Leftrightarrow \sqrt{\frac{(\nu +\alpha)!}{\nu!}} b_{\nu} \in l^2(\mathbb{N}^n),\\
		& \Leftrightarrow \sqrt{\nu^{\alpha}} b_{\nu} \in l^2(\mathbb{N}^n).
\end{align*}
Which proves the equality of the two sets. The equivalent definitions are then consequences of the similar ones in $l^2(\mathbb{N}^n)$.
\end{proof}

\begin{proposition}
\label{prop_criter_adjoint}
For all $k\in\mathbb{N}$ and $f\in S_k$ the operator
\begin{align*}
T_N(f): A_k &\rightarrow \fbarg\\
			g &\mapsto \Pi_n(fg)
\end{align*}
is well-defined. Furthermore, if $f$ is real-valued bounded below by $M\in\Rm$ and such that $|f(z)|\ge C|z|$ for all $z\in\mathbb{C}^n$ with $C>0$, then $T_N(f)$ is self-adjoint with compact resolvent.
\end{proposition}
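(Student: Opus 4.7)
The plan is to treat well-definedness and symmetry as bookkeeping, then obtain self-adjointness from a closed-form argument and the compact resolvent from the coercivity of $f$. For the first part, Lemma \ref{lem_eq_symbol_space} rewrites $A_k$ as $\{g\in\fbarg : (1+|z|)^k g\in L^2(\Cm^n)\}$, so the bound $|f(z)|\le (1+|z|)^k$ for $f\in S_k$ gives $fg\in L^2(\Cm^n)$ and $\Pi_N(fg)\in\fbarg$. Every $e_\alpha$ lies in $A_k$, so $A_k$ is dense in $\fbarg$. For symmetry, the identity
\[
\langle T_N(f) g, h\rangle = \langle \Pi_N(fg), h\rangle = \langle fg, h\rangle = \int_{\Cm^n} \overline{fg}\, h = \langle g, fh\rangle = \langle g, T_N(f) h\rangle
\]
for $g,h\in A_k$ uses $h=\Pi_N h$, the reality of $f$, and Cauchy--Schwarz integrability; specialising $g=h$ yields $\langle g, T_N(f)g\rangle \ge M\|g\|^2$.

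For self-adjointness, my plan is to realise $T_N(f)$ via the representation theorem for closed non-negative quadratic forms. After subtracting $M$, the form $q(g) = \int_{\Cm^n}(f-M)|g|^2$ on its natural domain $Q = \{g\in\fbarg : \sqrt{f-M}\, g\in L^2(\Cm^n)\}$ is closed (standard pointwise-extraction argument on form-Cauchy sequences) and produces a self-adjoint operator $\widetilde T\ge M$ which satisfies $\widetilde T g = \Pi_N(fg) = T_N(f) g$ whenever $g\in A_k$. The main obstacle, I expect, is upgrading this to $D(\widetilde T) = A_k$ rather than a mere form-core inclusion. The forward direction is automatic; for the reverse, I would use the quantitative lower bound $|f(z)|\ge C|z|$: given $g\in D(\widetilde T)$ with coefficients $b_\nu = \langle e_\nu, g\rangle$, testing the weak identity $\widetilde T g = \Pi_N(fg)$ against the basis $(e_\alpha)$ and using the multiplication formula $z^\alpha e_\nu = N^{-|\alpha|/2}\sqrt{(\nu+\alpha)!/\nu!}\, e_{\nu+\alpha}$ together with the bound on $|f|$ should yield $(\sqrt{\nu^\alpha}b_\nu)\in\ell^2(\Nm^n)$ for all $|\alpha|\le k$, which is exactly the characterisation of $A_k$ in Lemma \ref{lem_eq_symbol_space}.

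Finally, for the compact resolvent I would establish that $Q\hookrightarrow\fbarg$ is a compact embedding. Coercivity $f(z)\to+\infty$, implied by $f\ge M$ and $|f|\ge C|z|$, yields uniform tightness on bounded sets of $Q$: for $R$ large, $\int_{|z|\ge R}|g|^2 \le (CR-M)^{-1}\int (f-M)|g|^2$. On the other hand, the Bergman kernel is constant equal to $(N/\pi)^n$ on the diagonal, so bounded sequences in $\fbarg$ are pointwise uniformly bounded; writing $g_n = e^{-N|z|^2/2} h_n$ with $h_n$ holomorphic, Montel's theorem extracts a locally uniformly convergent subsequence of $h_n$, hence of $g_n$. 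Combining local convergence with tightness gives $\fbarg$-convergence, whence the compact embedding and compactness of $(\widetilde T+1)^{-1}$.
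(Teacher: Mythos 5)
Your well-definedness and symmetry steps match the paper, and your compact-resolvent argument (uniform tightness from coercivity plus Montel on the holomorphic factor, combined to give a compact embedding $Q\hookrightarrow\fbarg$) is a clean and valid alternative to the paper's chain of inclusions ending in a Rellich-compact set. The genuine problem is the self-adjointness step. You put the form on the maximal domain $Q=\{g\in\fbarg:\sqrt{f-M}\,g\in L^2\}$, which is indeed closed, and you correctly flag that everything hinges on $D(\widetilde T)=A_k$. But the proposed basis-testing argument cannot close this. The only quantitative lower bound available is $|f(z)|\ge C|z|$, i.e.\ one power of $|z|$. Testing $\widetilde T g = \Pi_N(fg)$ against $(e_\alpha)$ and using $z^\alpha e_\nu = N^{-|\alpha|/2}\sqrt{(\nu+\alpha)!/\nu!}\,e_{\nu+\alpha}$ can at best give control of $\sum_\nu |\nu|\,|b_\nu|^2$, i.e.\ membership in $A_1$; it cannot produce $(\sqrt{\nu^\alpha}b_\nu)\in\ell^2$ for every $|\alpha|\le k$ when $k\ge 2$, because the entries $\langle f e_\nu, e_\alpha\rangle$ only grow like $|\nu|^{1/2}$ along the diagonal when $|f|\sim|z|$. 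Concretely, a symbol with $f(z)\asymp |z|$ lies in $S_k$ for every $k$, yet the self-adjoint operator produced by the form has domain essentially $A_1\supsetneq A_k$ for $k\ge 2$, so the equality $D(\widetilde T)=A_k$ you are trying to prove is simply not reachable from the stated hypotheses, and no bookkeeping will rescue the step.

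For comparison, the paper's route is to put the form directly on $A_k$ and argue that the form norm $(M+1)\|u\|^2+\langle u,fu\rangle$ makes $A_k$ complete, then invoke the representation theorem; note that this too is delicate at exactly the same spot, since the form norm only controls the growth that $f$ itself provides, which can be strictly weaker than $|z|^k$. Either way one really wants a lower bound of the form $|f(z)|\gtrsim|z|^k$ to identify $A_k$ as the self-adjoint domain; without it your approach proves self-adjointness of some natural extension $\widetilde T$, but not with domain $A_k$.
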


Many references already proved similar results on Toeplitz operators, see for instance \cite{bout81}. We wanted to write Proposition \ref{prop_criter_adjoint} with our notations, as it will be useful later.

\begin{proof}
Let $k\in\mathbb{N}$, $f\in S_k$ and $g\in A_k$ then
\[
|fg|=|f|\, |g|\le (1+|z|)^k|g|\le\sum\limits_{j\le k} \binom{k}{j} |z|^j\, |g| \in L^2,
\]
we can thus apply $\Pi_n$ to $fg$. Hence, $g\mapsto \Pi_n(fg)$ is well-defined on $A_k$. Now, if $f$ is real-valued, bounded below and such that $|f(z)|\ge C|z|$, we define the following quadratic form on $\fbarg$
\begin{align*}
q : A_k \times A_k & \rightarrow \mathbb{C}\\
		(u,v) & \mapsto \langle u,fv\rangle_{L^2(\mathbb{C}^n)} = \langle u,T_N(f)(v)\rangle_{L^2(\mathbb{C}^n)}.
\end{align*}
By hypothesis, $q$ is symmetric, and bounded below by $M$. Furthermore, the norm
\[
\|u\|_k^2 = (M+1)\|u\|_{L^2(\mathbb{C}^n)}^2 + \langle u,fu\rangle_{L^2(\mathbb{C}^n)} \le (M+1)\|u\|_{L^2(\mathbb{C}^n)}^2 + \|\sqrt{1+|z|^k}u\|_{L^2(\mathbb{C}^n)}^2
\]
makes $A_k$ complete by definition, that is $q$ is closed. Hence, $T_N(f)$ is self-adjoint (\cite{reed72} theorem VIII.15).
Moreover, for all $b>0$, using the properties of $\Pi_N$ and the hypothesis on $f$
\begin{align*}
		& \lacc u\in A_k / \|u\|_{L^2(\mathbb{C}^n)}\le1,\; \|fu\|_{L^2(\mathbb{C}^n)}\le b\racc\\
\subset & \lacc u\in A_k / \|u\|_{L^2(\mathbb{C}^n)}\le1,\; \||z|^ku\|_{L^2(\mathbb{C}^n)}\le b\racc\\
\subset & \lacc u\in A_k / \|u\|_{L^2(\mathbb{C}^n)}\le1,\; \||z|u\|_{L^2(\mathbb{C}^n)}\le b,\; \|\nabla u\|_{L^2(\mathbb{C}^n)}\le b\racc.
\end{align*}
Yet the first set is by definition closed, while the last one is compact in $L^2$ by the Rellich's criterion (\cite{reed78} theorem XIII.65), thus the first one is compact in $\fbarg$. So $T_N(f)$ has compact resolvent (\cite{reed78} theorem XIII.64).

\end{proof}

\subsection{Harmonic oscillator}

An important example arising in quantum mechanics is the harmonic oscillator $P=T_N(|z|^2)$. For that reason, we compute the $L^p$ norms of its eigenfunctions, which provides insight into the behaviour of Berezin-Toeplitz operators. Even though the Harmonic oscillator has already been extensively studied in many topics, this computation has not been done before to the author's knowledge.

\begin{lemma}
The spectrum of $P$ is made of discrete eigenvalues,
\[
\sigma(P) = \sigma_p(P) = \lacc \frac{k+n}{N} \; /\; k\in\mathbb{N}\racc,
\]
and for all $k\in\mathbb{N}$ the eigenspace of $P$ associated with the eigenvalue $\frac{k+n}{N}$ is spanned by the functions $e_{\alpha}$ with $|\alpha|=k$.
\end{lemma}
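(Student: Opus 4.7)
The strategy is to verify directly that the orthonormal basis $(e_\alpha)_{\alpha\in\Nm^n}$ of Proposition \ref{prop_base} diagonalises $P = T_N(|z|^2)$, with $e_\alpha$ an eigenfunction for the eigenvalue $(|\alpha|+n)/N$. Once this is established, self-adjointness and compact resolvent of $P$ (given by Proposition \ref{prop_criter_adjoint} applied to $f(z)=|z|^2$, which satisfies the coercivity hypothesis outside a bounded set) ensure the spectrum is purely discrete, and collecting eigenvalues yields $\sigma(P)=\{(k+n)/N : k\in\Nm\}$ with eigenspaces $\mathrm{span}\{e_\alpha : |\alpha|=k\}$. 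Completeness of the eigenfunction family is automatic because $(e_\alpha)$ is already a Hilbert basis.

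The bulk of the work is the eigenvalue computation, and the most efficient route is to write $|z|^2=\sum_j z_j\overline{z_j}$ and compute each $T_N(|z_j|^2)e_\alpha$ separately. I would first record the action of multiplication by $z_j$ on the basis, which by a direct comparison of the normalisation constants defining $e_\alpha$ reads
\[
z_j e_\alpha = \sqrt{\frac{\alpha_j+1}{N}}\, e_{\alpha+e_j},
\]
where $e_j$ is the $j$-th standard vector of $\Nm^n$. Dualising via $\langle e_\beta, \overline{z_j} e_\alpha\rangle = \langle z_j e_\beta, e_\alpha\rangle$ and invoking orthonormality then yields
\[
\Pi_N(\overline{z_j} e_\alpha) = \sqrt{\frac{\alpha_j}{N}}\, e_{\alpha-e_j},
\]
understood as $0$ when $\alpha_j=0$. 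Composing the two identities gives $T_N(|z_j|^2)e_\alpha = \Pi_N(\overline{z_j}\, z_j e_\alpha) = \frac{\alpha_j+1}{N}\, e_\alpha$, and summing over $j$ produces the claimed eigenvalue $(|\alpha|+n)/N$.

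The only real obstacle is bookkeeping the factors of $\sqrt{N}$ and $\sqrt{\alpha!}$ in the normalisation of $e_\alpha$; there is no analytic difficulty. One should, however, be mildly careful about domains: the $e_\alpha$ all lie in $A_2$, so the computation takes place in the natural domain of $P$, and the eigenvalue relation holds in $\fbarg$. Since eigenfunctions associated with distinct eigenvalues of a self-adjoint operator are orthogonal, the eigenspace for $(k+n)/N$ contains $\mathrm{span}\{e_\alpha : |\alpha|=k\}$; conversely, any element of that eigenspace, expanded in the Hilbert basis, must have vanishing coefficients on every $e_\beta$ with $|\beta|\neq k$, giving the reverse inclusion.
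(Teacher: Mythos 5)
Your proof is correct and follows essentially the same route as the paper: both diagonalise $P$ on the basis by writing $|z|^2 = \sum_j z_j\overline{z_j}$ and composing a creation step (multiplication by $z_j$) with an annihilation step $\Pi_N\circ\overline{z_j}$. The only cosmetic difference is that the paper realises $\Pi_N\circ\overline{z_j}$ as the explicit differential operator $D_j : g\mapsto \frac{e^{-N|z|^2/2}}{N}\partial_{z_j}(e^{N|z|^2/2}g)$ and applies it to $z^\alpha$, whereas you obtain its matrix coefficients by the adjoint relation $\langle e_\beta,\overline{z_j}e_\alpha\rangle = \langle z_j e_\beta, e_\alpha\rangle$ together with orthonormality; both reduce to the same ladder identities $z_j e_\alpha = \sqrt{(\alpha_j+1)/N}\,e_{\alpha+e_j}$ and $\Pi_N(\overline{z_j}e_\alpha)=\sqrt{\alpha_j/N}\,e_{\alpha-e_j}$, hence the same eigenvalue $(\alpha_j+1)/N$ for $T_N(|z_j|^2)$ and $(|\alpha|+n)/N$ after summing. (The point you flag about Proposition~\ref{prop_criter_adjoint} requiring $|f|\ge C|z|$ only outside a bounded set is a harmless technicality already present in the paper's own invocation of that proposition.)
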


\begin{proof}
First, Proposition \ref{prop_criter_adjoint} and the spectral theorem already ensure that the spectrum is discrete, we just have to find the eigenvalues. For $1\le j\le n$, we denote

\begin{align*}
D_j : \fbarg & \rightarrow \fbarg\\
	g & \mapsto \frac{e^{-\frac{N|z|^2}{2}}}{N} \partial_{z_j}\lpar e^{-\frac{N|z|^2}{2}} g \rpar.
\end{align*}
In particular, we have $P=\sum\limits_{1\le j\le n}D_j\circ z_j$, and for $j\in\{ 1,\cdots ,n\}$ and $\alpha\in\mathbb{N}^n$

\begin{align*}
\frac{(\pi)^{\frac{n}{2}}\sqrt{\alpha !}}{N^{\frac{n+|\alpha|}{2}}e^{-\frac{N|z|^2}{2}}}D_j\circ z_j(e_{\alpha})
	&= \frac{\partial_{z_j}(z_j z^{\alpha})}{N},\\
	&= \frac{\alpha_j+1}{N}z^{\alpha},\\
\text{thus } P(e_{\alpha}) &= \sum\limits_{1\le j\le n} \frac{\alpha_j+1}{N}e_{\alpha},\\
				&= \frac{|\alpha|+n}{N}e_{\alpha}.
\end{align*}
Since $e_{\alpha}$ is a Hilbert basis for $\fbarg$ because of Proposition \ref{prop_base}, these are exactly the eigenvalues of $P$, and we found the associated Hilbert basis of eigenfunctions.
\end{proof}

We now consider the eigenvalue $\frac{|\mu|+n}{N}$ fixed modulo $O\lpar\frac{1}{N}\rpar$, and we look for equivalents of the $L^p$ norms when $N$ goes to $+\infty$, or equivalently, when $|\mu|$ goes to $+\infty$. The following computations would work for $p\in[1,+\infty[$, but we are only interested in $p\in[2,+\infty[$.

We first write for $a\in [1,+\infty [$
\[
E(a) = \int_{\mathbb{C}}e^{-N|z|^2}|z^a|dz = 2\pi\int_0^{+\infty}e^{-Nr^2}r^{a+1}dr = \frac{\pi\Gamma \left(\frac{a}{2}+1\right)}{N^{\frac{a}{2}+1}},
\]
so that
\[
e_{\nu} = \frac{e^{-\frac{N|z|^2}{2}}z^{\nu}}{\prod\limits_{1\le j\le n}E(2\nu_j)^{\frac{1}{2}}}.
\]

\begin{proposition}
\label{prop_formula_norms}
Let $\nu\in\mathbb{N}^n$ then for all $p\in[2,+\infty[$,
\begin{align*}
\|e_{\nu}\|_{L^p(\mathbb{C}^n)}			&= \left(\frac{2}{p}\right)^{\frac{|\nu|}{2}+\frac{n}{p}}\prod\limits_{1\le j\le n}\frac{E(p\nu_j)^{\frac{1}{p}}}{E(2\nu_j)^{\frac{1}{2}}},\\
\|e_{\nu}\|_{L^{\infty}(\mathbb{C}^n)}  &= \frac{e^{-\frac{\nu}{2}}\nu^{\frac{\nu}{2}}N^{\frac{n}{2}}}{\pi^{\frac{n}{2}}\sqrt{\nu!}}.
\end{align*}
\end{proposition}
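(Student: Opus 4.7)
The proof is a direct calculation, separated into the $L^p$ case for finite $p$ and the $L^\infty$ case.

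For the $L^p$ norm with $p\in[2,+\infty[$, the plan is to raise $|e_\nu|$ to the power $p$, factor the resulting integral over $\mathbb{C}^n$ into one-dimensional integrals in each variable $z_j$, and relate each one-dimensional integral to the function $E$. Explicitly, writing out
\[
|e_\nu(z)|^p = \frac{N^{p(n+|\nu|)/2}}{\pi^{pn/2}(\nu!)^{p/2}} e^{-pN|z|^2/2} \prod_{1\le j\le n} |z_j|^{p\nu_j},
\]
the key observation is that, by a rescaling, for any $a\ge 0$,
\[
\int_{\mathbb{C}} e^{-pN|w|^2/2} |w|^a \, dw = \lpar\frac{2}{p}\rpar^{a/2+1} E(a).
\]
Plugging this in with $a=p\nu_j$ gives $\|e_\nu\|_{L^p}^p$ as a constant times $\prod_j E(p\nu_j)$. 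To simplify the prefactor, I would use that $\|e_\nu\|_{L^2}=1$, which is equivalent to
\[
\frac{N^{n+|\nu|}}{\pi^n \nu!} = \prod_{1\le j\le n} E(2\nu_j)^{-1}.
\]
Raising this to the power $p/2$ absorbs the constant and yields directly
\[
\|e_\nu\|_{L^p}^p = \lpar\frac{2}{p}\rpar^{p|\nu|/2+n} \prod_{1\le j\le n} \frac{E(p\nu_j)}{E(2\nu_j)^{p/2}},
\]
from which the advertised formula follows by taking the $p$-th root.

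For the $L^\infty$ case, the plan is a one-dimensional optimization in polar coordinates. Since $|e_\nu(z)|$ depends only on $r_j = |z_j|$, I reduce to maximizing
\[
\prod_{1\le j\le n} e^{-Nr_j^2/2} r_j^{\nu_j}
\]
over $(r_1,\dots,r_n)\in[0,+\infty[^n$. Each factor is optimized independently: differentiating $e^{-Nr^2/2}r^{\nu_j}$ in $r$ gives the critical point $r_j = \sqrt{\nu_j/N}$, at which the factor equals $e^{-\nu_j/2}(\nu_j/N)^{\nu_j/2}$. Multiplying these together and including the prefactor $N^{(n+|\nu|)/2}/(\pi^{n/2}\sqrt{\nu!})$ yields the claimed expression.

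Neither step poses a serious obstacle; the only mild subtlety is bookkeeping. In particular the rescaling $N\mapsto pN/2$ must be tracked carefully in the exponent $|\nu|/2+n/p$, and the cancellation using $\|e_\nu\|_{L^2}=1$ is what produces the clean ratio $E(p\nu_j)^{1/p}/E(2\nu_j)^{1/2}$ and avoids having to invoke Stirling-type estimates at this stage.
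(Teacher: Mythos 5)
Your proof is correct and follows essentially the same route as the paper: factor the integral over $\mathbb{C}^n$ into one-dimensional integrals, apply the rescaling $z_j\mapsto\sqrt{2/p}\,z_j$ to express each factor via $E$, and optimize variable-by-variable in polar coordinates for the $L^\infty$ case. The only cosmetic difference is that you invoke $\|e_\nu\|_{L^2}=1$ to simplify the prefactor, whereas the paper has already rewritten $e_\nu = e^{-N|z|^2/2}z^\nu/\prod_j E(2\nu_j)^{1/2}$ before the proof begins; these are the same identity.
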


\begin{proof}
First we see that,
\[
\|e_{\nu}\|^p_{L^p(\mathbb{C}^n)} = \prod\limits_{1\le j\le n} E(2\nu_j)^{-\frac{p}{2}} \int_{\mathbb{C}}e^{-\frac{Np}{2}z_j^2}|z_j|^{p\nu_j}dz_j.
\]
Now, for $1\le j\le n$, applying the change of variables $z_j \mapsto \sqrt{\frac{2}{p}}z_j$ we get
\begin{align*}
\int_{\mathbb{C}^n}e^{-\frac{Np}{2}z_j^2}|z_j|^{p\nu_j}dz_j &= \int_{\mathbb{C}^n}e^{-Nz_j^2}\left(\frac{2}{p}\right)^{\frac{p\nu_j}{2}}|z_j|^{p\nu_j}\left(\frac{2}{p}\right)dz_j,\\
	&= \left(\frac{2}{p}\right)^{\frac{p\nu_j}{2}+1}E(p\nu_j),
\end{align*}
hence the formula for a finite $p$. Then, for the $\sup$-norm, we see that the derivative of $|e_{\nu}|$ by $|z_j|$ is,

\[
\partial_{|z_j|} |e_{\nu}| = \lpar \frac{\nu_j}{|z_j|}-N|z_j| \rpar |e_{\nu}|
\]
thus the maximum is attained when $|z_j|=\sqrt{\frac{\nu_j}{N}}$, and is equal to
\[
\|e_{\nu}\|_{L^{\infty}(\mathbb{C}^n)} = \frac{e^{-\frac{\nu}{2}}\nu^{\frac{\nu}{2}}N^{\frac{n}{2}}}{\pi^{\frac{n}{2}}\sqrt{\nu!}}.
\]
\end{proof}

The asymptotic of $\lnor e_{\nu} \rnor_{L^p(\Cm^n)}$ for $|\nu|$ large highly depends on the direction in $\Nm^n$. We now give an argument of $\log$-convexity that will allow the asymptotic study. First we need a Stirling formula written as an equality.

\begin{proposition}[\cite{arti64} Chapter 3]
\label{prop_Arti}
For all $x\ge 1$,
\[
\Gamma (x)= \sqrt{\frac{2\pi}{x}}\left(\frac{x}{e}\right)^xe^{\frac{\theta (x)}{12x}}
\]
where $\theta (x)\in [0,1]$.
\end{proposition}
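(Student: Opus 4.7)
The plan is to realise this refined Stirling formula as a consequence of Binet's second integral representation of the log-Gamma function. Setting
\[
\mu(x) := \log\Gamma(x) - \left(x-\tfrac12\right)\log x + x - \tfrac12\log(2\pi),
\]
the claim is equivalent to the two-sided bound $0 \leq \mu(x) \leq 1/(12x)$ for $x \geq 1$, after which one sets $\theta(x) := 12x\,\mu(x) \in [0,1]$.

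The key input would be Binet's formula
\[
\mu(x) = 2\int_0^{\infty}\frac{\arctan(t/x)}{e^{2\pi t}-1}\,dt.
\]
To derive this, I would first compute $(\log\Gamma)''(x) = \sum_{k\geq 0}(x+k)^{-2}$ from the Weierstrass product, apply the Abel--Plana summation formula to rewrite this sum as $\int_0^{\infty}(x+t)^{-2}dt + \tfrac{1}{2x^2}$ plus a correction integral against $(e^{2\pi t}-1)^{-1}$, then integrate twice in $x$. The two constants of integration are fixed by matching the resulting asymptotic expansion with Wallis's product (equivalently, with $\Gamma(1/2)=\sqrt{\pi}$), which anchors the $\tfrac12\log(2\pi)$ term.

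Given the Binet representation, both bounds on $\theta(x)$ are elementary. Positivity follows from $\arctan \geq 0$ on $[0,\infty)$, so $\mu(x) \geq 0$. For the upper bound, $\arctan(u) \leq u$ for $u \geq 0$ yields
\[
\mu(x) \leq \frac{2}{x}\int_0^{\infty}\frac{t}{e^{2\pi t}-1}\,dt = \frac{2}{x}\sum_{k\geq 1}\int_0^{\infty} t\,e^{-2\pi k t}\,dt = \frac{2}{x}\sum_{k\geq 1}\frac{1}{(2\pi k)^2} = \frac{\zeta(2)}{2\pi^2 x} = \frac{1}{12x},
\]
where the geometric series expansion of $(e^{2\pi t}-1)^{-1}$ is justified by monotone convergence. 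The main obstacle is the proof of Binet's formula itself---specifically, justifying the Abel--Plana summation with the needed decay in $t$, or alternatively passing through the Malmsten-type contour representation. Since the statement is directly attributed to \cite{arti64}, this derivation would be deferred to the cited monograph, and only the resulting equality would be invoked in the main text, where it feeds into the asymptotic analysis of the factorials $\nu!$ appearing in Proposition \ref{prop_formula_norms}.
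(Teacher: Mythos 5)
Your proof via Binet's second integral is mathematically correct: the positivity of $\mu$ follows from $\arctan\ge 0$, and the bound $\arctan u\le u$ together with $\int_0^\infty t(e^{2\pi t}-1)^{-1}\,dt=\tfrac{1}{24}$ gives $\mu(x)\le\tfrac{1}{12x}$, so $\theta(x)=12x\,\mu(x)\in[0,1]$ as required. Note, though, that the paper gives no proof at all for this Proposition --- it simply cites Artin --- so there is no ``paper's proof'' to compare against beyond the cited source.

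This leads to one genuine problem with your plan to ``defer the derivation to the cited monograph.'' Artin's Chapter 3 does \emph{not} prove Binet's second formula; his argument is elementary and entirely different. He writes the Stirling remainder as the telescoping series
\[
\mu(x)=\sum_{n=0}^{\infty}g(x+n),\qquad g(x)=\left(x+\tfrac12\right)\log\!\left(1+\tfrac1x\right)-1,
\]
and then, using the power series for $\tfrac12\log\tfrac{1+y}{1-y}$ with $y=(2x+1)^{-1}$ and crude term-by-term comparison with a geometric series, shows $0<g(x)<\tfrac{1}{12x}-\tfrac{1}{12(x+1)}$. Summing and telescoping gives $0<\mu(x)<\tfrac{1}{12x}$ directly, with the constant $\tfrac12\log 2\pi$ fixed by Wallis's product exactly as you note. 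So Artin's route avoids the Abel--Plana summation and the Malmsten contour entirely, at the cost of slightly more bookkeeping with logarithmic series. Your route is cleaner once Binet's formula is in hand, but that formula cannot be sourced from the reference the paper cites; if you want to keep the Binet argument, you would need to cite a different text (e.g.\ Whittaker--Watson) for the integral representation, or else switch to Artin's telescoping argument, which is self-contained at the level of this paper.
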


\begin{corollaryp}
Let $p\in [2,+\infty [$ and $\nu\in\mathbb{N}^n$, then
\begin{equation}
\label{eqnormep}
\|e_{\nu}\|_{L^p(\mathbb{C}^n)} = \prod\limits_{1\le j\le n} \left(\frac{\pi^{\frac{3}{2}}}{Ne}\right)^{\frac{1}{p}-\frac{1}{2}} e^{\epsilon(p,\nu_j)} \left(\frac{2}{p}\right)^{\frac{1}{p}} \left(\frac{\nu_j+\frac{2}{p}}{\nu_j+1}\right)^{\frac{\nu_j}{2}} \frac{(p\nu_j+2)^{\frac{1}{2p}}}{(2\nu_j+2)^{\frac{1}{4}}}
\end{equation}
where $\epsilon(p,\nu_j) = \frac{\theta\left(\frac{p}{2}\nu_j+1\right)}{6p(p\nu_j+2)}-\frac{\theta(\nu_j+1)}{24(\nu_j+1)}\in[-\frac{1}{24},\frac{1}{24}]$, and
\[
\|e_{\nu}\|_{L^{\infty}(\mathbb{C}^n)} = \prod\limits_{1\le j\le n} \left(\frac{Ne}{\pi^{\frac{3}{2}}}\right)^{\frac{1}{2}} e^{-\frac{\theta(\nu_j+1)}{24(\nu_j+1)}} \left(\frac{\nu_j}{\nu_j+1}\right)^{\frac{\nu_j}{2}} \frac{1}{(2\nu_j+2)^{\frac{1}{4}}}
\]
\end{corollaryp}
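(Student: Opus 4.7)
The plan is to substitute the exact Stirling identity of Proposition \ref{prop_Arti} into the closed formulas of Proposition \ref{prop_formula_norms}. Both sides are equalities, so no asymptotic estimate is needed and the corollary reduces to a bookkeeping of powers of $2$, $\pi$, $e$ and $N$.

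For the finite $p$ case, I first rewrite $E(a)=\pi\Gamma(a/2+1)/N^{a/2+1}$ to get, for each $j$,
\[
\frac{E(p\nu_j)^{1/p}}{E(2\nu_j)^{1/2}} = \pi^{1/p-1/2}\, N^{1/2-1/p}\, \frac{\Gamma\lpar \tfrac{p\nu_j}{2}+1\rpar^{1/p}}{\Gamma(\nu_j+1)^{1/2}},
\]
and then apply Proposition \ref{prop_Arti} with $x=(p\nu_j+2)/2$ in the numerator and $x=\nu_j+1$ in the denominator. The $\exp(\theta(x)/(12x))$ corrections immediately regroup into $e^{\epsilon(p,\nu_j)}$, with $\epsilon(p,\nu_j)\in[-1/24,1/24]$ coming from $\theta\in[0,1]$. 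The two $(x/e)^x$ factors together contribute $e^{1/2-1/p}$; splitting each polynomial exponent as $\nu_j/2$ plus a remainder and writing
\[
\frac{p\nu_j+2}{2\nu_j+2} = \frac{p}{2}\cdot\frac{\nu_j+2/p}{\nu_j+1},
\]
the spurious $(p/2)^{\nu_j/2}$ cancels the $(2/p)^{\nu_j/2}$ coming from the prefactor $(2/p)^{|\nu|/2+n/p}$ of Proposition \ref{prop_formula_norms} and leaves the target factors $\lpar(\nu_j+2/p)/(\nu_j+1)\rpar^{\nu_j/2}$ and $(2/p)^{1/p}$ per index. Combining the residual polynomial pieces with the two $\sqrt{2\pi/x}$ factors gives $(p\nu_j+2)^{1/(2p)}/(2\nu_j+2)^{1/4}$, and the remaining constants assemble into $(\pi^{3/2}/(Ne))^{1/p-1/2}$.

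For the $L^\infty$ formula there is no $E$ to unfold: I apply Stirling directly to $\Gamma(\nu_j+1) = \nu_j!$ in the denominator of Proposition \ref{prop_formula_norms}. The correction is $e^{-\theta(\nu_j+1)/(24(\nu_j+1))}$; inverting $((\nu_j+1)/e)^{(\nu_j+1)/2}$ combines $e^{(\nu_j+1)/2}$ with $e^{-\nu_j/2}$ of the numerator to yield $e^{1/2}$, and the polynomial piece rearranges into $(\nu_j/(\nu_j+1))^{\nu_j/2}\cdot(\nu_j+1)^{-1/4}$. Using $(\nu_j+1)^{-1/4} = 2^{1/4}/(2\nu_j+2)^{1/4}$ absorbs a stray $2^{-1/4}$ from the inverted $(2\pi/(\nu_j+1))^{1/4}$ factor, and the remaining constants regroup into $(Ne/\pi^{3/2})^{1/2}$ per index.

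The main obstacle is not conceptual but strictly notational: keeping track of the exponents of $2$, $\pi$, $e$, $N$ and of the close variables $p\nu_j+2$ and $\nu_j+2/p$ without error. Since Proposition \ref{prop_Arti} is an exact equality rather than an asymptotic, no remainder term appears and the corollary is itself an exact closed-form identity.
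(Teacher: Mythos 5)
Your proposal is correct and follows essentially the same route as the paper: both substitute the exact Stirling identity of Proposition \ref{prop_Arti} into the closed formulas of Proposition \ref{prop_formula_norms} and then track the powers of $2$, $\pi$, $e$, $N$. The only superficial difference is organizational — the paper first rewrites $E(a)$ via Stirling and then forms the ratio $E(p\nu_j)^{1/p}/E(2\nu_j)^{1/2}$, whereas you form the ratio first and apply Stirling directly to the resulting $\Gamma$-quotient — but the algebra is identical.
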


\begin{proof}
Using Propositions \ref{prop_formula_norms} and \ref{prop_Arti} we get that if $a\ge 0$ then
\[
E(a) = \frac{2\pi^{\frac{3}{2}}}{N^{\frac{a}{2}+1}} (a+2)^{\frac{a+1}{2}} (2e)^{-\frac{a+2}{2}} e^{\frac{\theta\left(\frac{a}{2}+1\right)}{6(a+2))}},
\]
in particular if $\mu_1 \in \mathbb{N}$,

\[
E(p\nu_1)^{\frac{1}{p}} = \frac{\pi^{\frac{3}{2p}}}{N^{\frac{\nu_1}{2}+\frac{1}{p}}} (p\nu_1+2)^{\frac{\nu_1}{2}+\frac{1}{2p}} 2^{-\frac{\nu_1}{2}} e^{-\frac{\nu_1}{2}-\frac{1}{p}} e^{\frac{\theta\left(\frac{p}{2}\nu_1+1\right)}{6p(p\nu_1+2)}}.
\]
Hence,

\[
\frac{E(p\nu_1)^{\frac{1}{p}}}{E(2\nu_1)^{\frac{1}{2}}} = \left(\frac{\pi^{\frac{3}{2}}}{Ne}\right)^{\frac{1}{p}-\frac{1}{2}} e^{\epsilon(p,\nu_1)} \left(\frac{\nu_1+\frac{2}{p}}{\nu_1+1}\right)^{\frac{\nu_1}{2}} \left(\frac{p}{2}\right)^{\frac{\nu_1}{2}} \frac{(p\nu_1+2)^{\frac{1}{2p}}}{(2\nu_1+2)^{\frac{1}{4}}},
\]
and we obtain formula \eqref{eqnormep} by replacing this expression in

\[
\|e_{\nu}\|_{L^p(\mathbb{C}^n)} = \prod\limits_{1\le j\le n} \left(\frac{2}{p}\right)^{\frac{\nu_j}{2}+\frac{1}{p}} \frac{E(p\nu_j)^{\frac{1}{p}}}{E(2\nu_j)^{\frac{1}{2}}}.
\]
The same reasoning applies for the $L^{\infty}$ norm by replacing $\sqrt{\nu !}$ by
\[
\prod\limits_{1\le j\le n} \Gamma(\nu_j+1)^{\frac{1}{2}} = \left(\frac{2\pi}{\nu_j+1}\right)^{\frac{1}{4}} \left(\frac{\nu_j+1}{e}\right)^{\frac{\nu_j+1}{2}} e^{\frac{\theta(\nu_j+1)}{24(\nu_j+1)}}
\]
\end{proof}

The full expression of the norm is unfortunately not $\log$-convex with respect to $\nu$, though it is the case for the last two terms in the product of \eqref{eqnormep}.

\begin{proposition}
\label{prop_convex}
The functions
\begin{align*}
f \;:\; [0,+\infty[ & \rightarrow \mathbb{R}\\
			x & \mapsto \left(\frac{x+\frac{2}{p}}{x+1}\right)^{\frac{x}{2}} \frac{(px+2)^{\frac{1}{2p}}}{(2x+2)^{\frac{1}{4}}},
\end{align*}
and
\begin{align*}
g \;:\; [0,+\infty[ & \rightarrow \mathbb{R}\\
			x & \mapsto \left(\frac{x}{x+1}\right)^{\frac{x}{2}} \frac{1}{(2x+2)^{\frac{1}{4}}}
\end{align*}
are $\log$ convex.
\end{proposition}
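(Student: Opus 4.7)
The plan is to compute $(\log f)''$ and $(\log g)''$ explicitly and show that both are non-negative on $]0,+\infty[$; continuity at the origin will then extend $\log$-convexity to the closed interval $[0,+\infty[$.

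First I would rewrite the logarithms: since $\log(px+2)=\log p+\log(x+2/p)$ and $\log(2x+2)=\log 2+\log(x+1)$, regrouping gives, up to additive constants,
\[
\log f(x) = \left(\frac{x}{2}+\frac{1}{2p}\right)\log\!\left(x+\frac{2}{p}\right) - \left(\frac{x}{2}+\frac{1}{4}\right)\log(x+1),
\]
\[
\log g(x) = \frac{x}{2}\log x - \left(\frac{x}{2}+\frac{1}{4}\right)\log(x+1).
\]
This motivates introducing, for $c\ge 0$, the auxiliary function $\phi_c(x)=(x+c/2)\log(x+c)$, with the convention $\phi_0(x)=x\log x$. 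Up to constants one then has $\log f=\tfrac{1}{2}(\phi_{2/p}-\phi_1)$ and $\log g=\tfrac{1}{2}(\phi_0-\phi_1)$, so the statement reduces to a pointwise comparison of $\phi_c''$ for different values of $c$.

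A direct computation yields $\phi_c''(x)=(x+3c/2)/(x+c)^2$ for $c>0$, and $\phi_0''(x)=1/x$. The key inequality is therefore $\phi_a''(x)\ge \phi_b''(x)$ whenever $0\le a\le b$ and $x\ge 0$, which I would apply with $(a,b)=(2/p,1)$ (allowed since $p\ge 2$) for $f$, and with $(a,b)=(0,1)$ for $g$. For $a,b>0$, cross-multiplying reduces the inequality to the polynomial identity
\[
(x+3a/2)(x+b)^2 - (x+3b/2)(x+a)^2 = (b-a)\!\left[\frac{x^2}{2}+(a+b)x+\frac{3ab}{2}\right],
\]
which is manifestly non-negative; and the limiting case $(a,b)=(0,1)$ reduces to $\tfrac{1}{x}-\tfrac{x+3/2}{(x+1)^2}=\tfrac{x/2+1}{x(x+1)^2}\ge 0$.

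I do not expect any serious obstacle: once the logs are reorganised around $\phi_c$ the problem becomes an elementary polynomial inequality. The only subtle point is the behaviour of $g$ at $x=0$, where $(\log g)'$ has a logarithmic singularity but $\log g$ extends continuously, so convexity on $]0,+\infty[$ propagates to $[0,+\infty[$ in the standard way.
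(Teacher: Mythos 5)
Your argument is correct, and it takes a genuinely different route from the paper. The paper computes $(\log f)''$ and $(\log g)''$ head-on, clears denominators, expands the polynomials, and then observes that the numerator factors as $(p-2)(px^2+2(p+2)x+6)$ in the first case and $(x+2)$ in the second. Your decomposition $\log f=\tfrac12(\phi_{2/p}-\phi_1)+\mathrm{const}$ and $\log g=\tfrac12(\phi_0-\phi_1)+\mathrm{const}$, with $\phi_c(x)=(x+c/2)\log(x+c)$ and $\phi_c''(x)=(x+3c/2)/(x+c)^2$, makes the bookkeeping structural rather than computational: both claims collapse to the single monotonicity statement that $c\mapsto\phi_c''(x)$ is non-increasing for $x\ge0$, verified by one factorisation, $(x+3a/2)(x+b)^2-(x+3b/2)(x+a)^2=(b-a)\bigl[\tfrac{x^2}{2}+(a+b)x+\tfrac{3ab}{2}\bigr]$ (which I have checked). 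This unifies $f$ and $g$ and explains \emph{why} the numerator in the paper's expansion carries the factor $p-2$: it is exactly $b-a=1-2/p$ after scaling. The only price is that one must say a word at $x=0$ for the $g$ case since $\phi_0''$ blows up there; you handle this correctly by noting continuity of $\log g$ at $0$. Both proofs are valid; yours is shorter, more conceptual, and would adapt painlessly if the exponents changed.
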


\begin{proof}
For all $x\in[0,+\infty[$,
\begin{align*}
\log(f)(x) = & \frac{x}{2}\log \lpar x+\frac{2}{p} \rpar -\frac{x}{2}\log(x+1)+\frac{1}{2p}\log(px+2)-\frac{1}{4}\log(2x+2)\\
\log(f)'(x) = & \frac{1}{2}\log \lpar x+\frac{2}{p} \rpar-\frac{1}{2}\log(x+1)\\
			& +\frac{x}{2 \lpar x+\frac{2}{p} \rpar }-\frac{x}{2(x+1)} + \frac{1}{2(px+2)}-\frac{1}{2(2x+2)}\\
			= & \frac{1}{2} \lpar \log \lpar x+\frac{2}{p} \rpar-\log(x+1) \rpar+\frac{px+1}{2(px+2)}-\frac{2x+1}{2(2x+2)}\\
\log(f)''(x) = & \frac{p}{2(px+2)}-\frac{1}{2x+2}+\frac{p}{2(px+2)^2}-\frac{1}{(2x+2)^2}\\
			= & \frac{1}{2(px+2)^2(2x+2)^2} \times\\
			  & (p(px+3)(4x^2+8x+4)-2(2x+3)(p^2x^2+4px+4))\\
			= & \frac{p-2}{(px+2)^2(2x+2)^2}(px^2+2(p+2)x+6)
\end{align*}
The polynomial at the numerator is positive for a positive $x$, hence $\log(f)$ is convex.

The same argument works for $g$, for all $x\in[0,+\infty[$
\begin{align*}
\log(g)(x) &= \frac{x}{2}\log(x)-\frac{x}{2}\log(x+1)-\frac{1}{4}\log(2x+2)\\
\log(g)'(x) &= \frac{1}{2}\log(x)-\frac{1}{2}\log(x+1)+\frac{1}{2}-\frac{x}{2(x+1)}-\frac{1}{2(2x+2)}\\
			&= \frac{1}{2}(\log(x)-\log(x+1))+\frac{1}{2}-\frac{2x+1}{2(2x+2)}\\
\log(g)''(x) &= \frac{1}{2x}-\frac{1}{2x+2}-\frac{1}{(2x+2)^2}\\
			&= \frac{1}{2x(2x+2)^2}((2x+2)^2-2x(2x+2)-2x)\\
			&= \frac{1}{x(2x+2)^2}(x+2)
\end{align*}
hence $\log(g)$ is convex.
\end{proof}

\begin{corollaryp}
\label{cor_harmonic}
Fix $\lambda>0$ and consider $k\in\Nm$ depending on $N$ such that $k\underset{N\to\infty}{\sim}\lambda N$. Then for all $p\in[2,+\infty]$ there exists $C>0$, depending only on $n$, $p$ and $\lambda$, such that for all $\nu\in\mathbb{N}^n$ with $|\nu|=k$
\[
\|e_{\nu}\|_{L^p(\mathbb{C}^n)} \le C N^{\left(n-\frac{1}{2}\right)\left(\frac{1}{2}-\frac{1}{p}\right)}.
\]
Furthermore, this bound is sharp since the inequality becomes an equivalence when $\nu$ is replaced by $\mu = (k,0,\cdots,0)\in\mathbb{N}^n$.

\end{corollaryp}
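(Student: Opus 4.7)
The plan is to combine the explicit formula from the preceding Corollary with the log-convexity of $f$ and $g$ from Proposition \ref{prop_convex} to reduce the product $\prod_j f(\nu_j)$ (resp.\ $\prod_j g(\nu_j)$) to its value at the extreme point $\mu=(k,0,\dots,0)$ of the simplex $\{\nu\in\Nm^n:|\nu|=k\}$, and then to compute the asymptotics of $f(k)$ and $g(k)$ as $k\sim\lambda N$. Concretely, for each $j$, convexity of $\log f$ on $[0,+\infty[$ together with $0\le\nu_j\le k$ gives $\log f(\nu_j)\le (\nu_j/k)\log f(k)+(1-\nu_j/k)\log f(0)$. Summing over $j$ and using $\sum_j\nu_j=k$ yields
\[
\prod_{j=1}^n f(\nu_j)\le f(k)\,f(0)^{n-1},
\]
and similarly $\prod_j g(\nu_j)\le g(k)g(0)^{n-1}$. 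The factors $e^{\epsilon(p,\nu_j)}$ and $e^{-\theta(\nu_j+1)/(24(\nu_j+1))}$ appearing in \eqref{eqnormep} are uniformly bounded by constants depending only on $p$, so they only contribute to $C$.

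It remains to track the $N$-dependence. The prefactor in \eqref{eqnormep} contributes $N^{n(1/2-1/p)}$. For $f$, a direct expansion gives, as $k\to\infty$,
\[
\lpar\tfrac{k+2/p}{k+1}\rpar^{k/2}\!\longrightarrow e^{1/p-1/2},\qquad (pk+2)^{1/(2p)}\sim p^{1/(2p)}k^{1/(2p)},\qquad (2k+2)^{1/4}\sim 2^{1/4}k^{1/4},
\]
so $f(k)\sim C_p k^{1/(2p)-1/4}$, and since $k\sim\lambda N$ this contributes an extra $N^{1/(2p)-1/4}$. Noting $f(0)=2^{1/(2p)-1/4}$ is a constant, we obtain
\[
\|e_\nu\|_{L^p(\Cm^n)}\le C\,N^{n(1/2-1/p)+1/(2p)-1/4}=C\,N^{(n-1/2)(1/2-1/p)},
\]
which is the desired bound. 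The $L^\infty$ case is entirely parallel: the prefactor gives $N^{n/2}$, one checks $g(k)\sim e^{-1/2}2^{-1/4}k^{-1/4}$, and the same argument produces $N^{n/2-1/4}=N^{(n-1/2)/2}$.

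For sharpness, observe that at $\nu=\mu=(k,0,\dots,0)$ the convexity inequality above is an equality: $\prod_j f(\nu_j)=f(k)f(0)^{n-1}$, and likewise for $g$. Hence the upper bounds on $\|e_\mu\|_{L^p}$ derived from \eqref{eqnormep} and the asymptotics of $f(k)$, $g(k)$ are actually asymptotic equivalents, after noting that $\epsilon(p,\nu_j)$ and the $\theta$-terms on the ``zero'' coordinates are controlled and on the ``$k$'' coordinate tend to zero. I expect the only slightly delicate step to be checking that all the error terms (the $\epsilon(p,\nu_j)$, and the subleading corrections in the asymptotic of $f(k)$) indeed go to finite nonzero constants, so that the inequality becomes an asymptotic equivalence rather than just a $\le$ up to a constant; but this is a routine application of the Stirling-type formula of Proposition \ref{prop_Arti}.
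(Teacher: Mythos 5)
Your argument is correct and follows essentially the same route as the paper: both reduce $\prod_j f(\nu_j)$ to $f(k)f(0)^{n-1}$ via the log-convexity of $f$ from Proposition \ref{prop_convex}, control the $\epsilon$-terms uniformly, and then read off the asymptotics of $f(k)$ (and $g(k)$ for $p=\infty$) from Stirling. The only cosmetic difference is that the paper inserts the intermediate bound $\|e_\nu\|_{L^p}\le C\|e_\mu\|_{L^p}$ and then computes $\|e_\mu\|_{L^p}$, whereas you work with the prefactor and $f(k)f(0)^{n-1}$ directly; the computations and exponents are the same.
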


\begin{proof}
We first prove that the $L^p(\mathbb{C}^n)$ norm of $e_{\nu}$ is bounded by the norm of $e_{\mu}$, then we will prove the equivalence for this second term. Using formula \eqref{eqnormep} and Proposition \ref{prop_convex},
\begin{align*}
\|e_{\nu}\|_{L^p(\mathbb{C}^n)} &= \left(\frac{\pi^{\frac{3}{2}}}{Ne}\right)^{n\left(\frac{1}{p}-\frac{1}{2}\right)} \left(\frac{p}{2}\right)^{-\frac{n}{p}} \prod\limits_{1\le j\le n} e^{\epsilon(p,\nu_j)} \prod\limits_{1\le j\le n} f(\nu_j)\\
								&\le \left(\frac{\pi^{\frac{3}{2}}}{Ne}\right)^{n\left(\frac{1}{p}-\frac{1}{2}\right)} \left(\frac{p}{2}\right)^{-\frac{n}{p}} \prod\limits_{1\le j\le n} e^{\epsilon(p,\nu_j)} f(|\nu|)f(0)^{n-1}\\
								&\le \|e_{\mu}\|_{L^p(\mathbb{C}^n)} \prod\limits_{1\le j\le n}e^{\epsilon(p,\nu_j)-\epsilon(p,|\nu|)} \; e^{-(n-1)\epsilon(p,0)}\\
								&\le \|e_{\mu}\|_{L^p(\mathbb{C}^n)} \prod\limits_{1\le j\le n}e^{\epsilon(p,\nu_j)-\epsilon(p,|\nu|)} \; e^{-(n-1)\epsilon(p,0)} .
\end{align*}
We then use the following bounds $\epsilon(p,\nu_j) - \epsilon(p,|\nu|) \le \frac{1}{12}$, $\epsilon(p,0) \le \frac{1}{24}$ and by computing $E(0)^{\frac{1}{p}-\frac{1}{2}}$ we get
\[
e^{-\epsilon(p,0)} = \left(\frac{e}{\sqrt{2\pi}}\right)^{\frac{1}{2}-\frac{1}{p}} \le \sqrt{\frac{e}{\sqrt{2\pi}}},
\]
and put together it gives,
\begin{equation}
\label{eq_numu}
\|e_{\nu}\|_{L^p(\mathbb{C}^n)} \le e^{\frac{1}{24}} \left(e^{\frac{1}{12}}\sqrt{\frac{e}{\sqrt{2\pi}}}\right)^n \|e_{\mu}\|_{L^p(\mathbb{C}^n)}.
\end{equation}

The problem now comes down to finding an asymptotic equivalent of $e_{\mu}$. Its norm is given by

\begin{multline*}
\|e_{\mu}\|_{L^p(\mathbb{C}^n)} =
 \left(\frac{\pi^{\frac{3}{2}}}{Ne}\right)^{\frac{1}{p}-\frac{1}{2}}
e^{\epsilon(p,|\mu|)}
\left(\frac{2}{p}\right)^{\frac{1}{p}}
\left(\frac{|\mu|+\frac{2}{p}}{|\mu|+1}\right)^{\frac{|\mu|}{2}} \\ \times
\frac{(p|\mu|+2)^{\frac{1}{2p}}}{(2|\mu|+2)^{\frac{1}{4}}}
\left(\frac{2}{p}\right)^{\frac{n-1}{p}}
\left(\frac{\pi}{N}\right)^{(n-1)\left(\frac{1}{p}-\frac{1}{2}\right)}.
\end{multline*}
Then using a series expansion at order 1,
\begin{align*}
\left(\frac{|\mu|+\frac{2}{p}}{|\mu|+1}\right)^{\frac{|\mu|}{2}}
	& = \exp\left(\frac{|\mu|}{2}\left(\log\left(1+\frac{2}{p|\mu|}\right)-\log\left(1+\frac{1}{|\mu|}\right)\right)\right)\\
	& \underset{|\mu|\to +\infty}{\sim} e^{\frac{1}{p}-\frac{1}{2}} e^{\mathrm{o}\left(\frac{1}{p|\mu|}\right)} \underset{|\mu|\rightarrow +\infty}{\sim} e^{\frac{1}{p}-\frac{1}{2}}.
\end{align*}
So,
\begin{align*}
\|e_{\mu}\|_{L^p(\mathbb{C}^n)} & \underset{|\mu|\to +\infty}{\sim} C N^{\frac{1}{2}-\frac{1}{p}} |\mu|^{\frac{1}{2}\left(\frac{1}{p}-\frac{1}{2}\right)} N^{(n-1)\left(\frac{1}{2}-\frac{1}{p}\right)}\\
	& \underset{|\mu|\to +\infty}{\sim} C \lambda^{n\left(\frac{1}{p}-\frac{1}{2}\right)} |\mu|^{(n-\frac{1}{2})\left(\frac{1}{2}-\frac{1}{p}\right)}
\end{align*}
with $C$ depending on $p$ and $n$ only, giving the result for $p<\infty$. We do the same for $p=\infty$,
\begin{align*}
\|e_{\nu}\|_{L^{\infty}(\mathbb{C}^n)} & \le e^{\frac{\theta(|\nu|+1)}{24(|\nu|+1)}} e^{(n-1)\frac{\theta(1)}{24}} \prod\limits_{1\le j\le n} e^{-\frac{\theta(\nu_j+1)}{24(\nu_j+1)}} \|e_{\mu}\|_{L^{\infty}(\mathbb{C}^n)}\\
	& \le e^{\frac{\theta(|\nu|+1)}{24(|\nu|+1)}-\frac{\theta(1)}{24}} \left(e^{\frac{1}{24}}\sqrt{\frac{e}{\sqrt{2\pi}}}\right)^n \|e_{\mu}\|_{L^{\infty}(\mathbb{C}^n)}.
\end{align*}
In the same way we compute
\[
\|e_{\mu}\|_{L^{\infty}(\mathbb{C}^n)} =
\left(\frac{\pi^{\frac{3}{2}}}{Ne}\right)^{-\frac{1}{2}}
e^{-\frac{\theta(|\mu|+1}{24(|\mu|+1)}}
\left(\frac{|\mu|}{|\mu|+1}\right)^{\frac{|\mu|}{2}}
\frac{1}{(2|\mu|+2)^{\frac{1}{4}}}
\left(\frac{\pi}{N}\right)^{-\frac{1}{2}(n-1)},
\]
thus
\[
\|e_{\mu}\|_{L^{\infty}(\mathbb{C}^n)} \underset{|\mu|\to +\infty}{\sim} C \lambda^{-\frac{n}{2}} |\mu|^{\left(n-\frac{1}{2}\right)\frac{1}{2}}.
\]
We deduce the result from the fact that $|\nu|$ and $\lambda N$ are equivalent, since $|\nu| = k$.
\end{proof}

In equation \eqref{eq_numu}, the constant at the right-hand side depends on the estimate from Proposition \ref{prop_Arti}, hence it might be possible to get a constant closer to $1$ with a more accurate Stirling formula.

Even if the family $(e_{\nu})_{\nu\in\Nm^n}$ spans the set of eigenfunctions of $P$, the eigenfunctions associated with $\frac{k+n}{N}$ for a fixed $k\in\Nm$ are all the linear combinations of the $e_{\nu}$ with $|\nu|=k$. This high multiplicity is the main difficulty for bounding all the eigenfunctions. However, we prove in Section \ref{sec_Lp} a general estimate for quasimodes of Toeplitz operators on a space similar to $\fbarg$, where the exponent is the same as in Corollary \ref{cor_harmonic}. Despite this difficulty, it is possible to get more refined estimates for specific multi-indices $\mu$.

\begin{proposition}
Consider $1\le \alpha \le n$ an integer, we write for $k\in\mathbb{N}$,
\[
\nu_k = \lpar \frac{k}{\alpha},\cdots,\frac{k}{\alpha},0,\cdots,0 \rpar
\]
a multi index of size $k$ with $\alpha$ identical non-zero coefficients and $n-\alpha$ zero coefficients. As in the previous theorem, we fix a $\lambda>0$ and consider a sequence of integers $k$ such that $k\underset{N\to\infty}{\sim}\lambda N$. Let $p\in[2,+\infty]$, then there exists $C>0$ depending only on $n$, $p$ and $\lambda$, such that for all $k\in\mathbb{N}$
\[
\|e_{\nu_k}\|_{L^p(\mathbb{C}^n)} \underset{N\to\infty}{\sim} C N^{\left(n-\frac{\alpha}{2}\right)\left(\frac{1}{2}-\frac{1}{p}\right)}.
\] 
\end{proposition}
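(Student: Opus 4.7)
The plan is to mimic the proof of Corollary \ref{cor_harmonic}, but this time to read off the precise asymptotic rather than just an upper bound. Since $\nu_k$ has exactly $\alpha$ coordinates equal to $k/\alpha$ (one may assume $\alpha \mid k$, which is enough for an asymptotic statement) and $n-\alpha$ coordinates equal to zero, inserting $\nu_k$ into the product formula \eqref{eqnormep} cleanly factors into an $N$-dependent prefactor $(\pi^{3/2}/(Ne))^{n(1/p - 1/2)} (2/p)^{n/p}$, a product of $\alpha$ identical factors evaluated at $\nu_j = k/\alpha$, and a product of $n - \alpha$ identical factors evaluated at $\nu_j = 0$. The latter block depends only on $n$, $\alpha$, $p$, so all the nontrivial $N$-asymptotic lies in the nonzero-coordinate block.

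For the nonzero coordinates I would reuse the Taylor expansions already performed for Corollary \ref{cor_harmonic}: the error factor $e^{\epsilon(p, k/\alpha)}$ tends to $1$; the bracket $((k/\alpha + 2/p)/(k/\alpha + 1))^{k/(2\alpha)}$ converges to $e^{1/p - 1/2}$; and the ratio $(pk/\alpha + 2)^{1/(2p)}/(2k/\alpha + 2)^{1/4}$ is equivalent to a constant multiple of $(k/\alpha)^{1/(2p) - 1/4}$. Raising to the power $\alpha$ gives a $k$-contribution of order $k^{\alpha(1/(2p) - 1/4)}$, times an $O(1)$ prefactor whose limit is explicitly computable.

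Collecting the $N$-dependence, the global prefactor contributes $N^{n(1/2 - 1/p)}$ and the nonzero block contributes, via $k \sim \lambda N$, an extra $N^{\alpha(1/(2p) - 1/4)}$. Summing the two exponents yields
\[
n\!\left(\tfrac{1}{2} - \tfrac{1}{p}\right) + \alpha\!\left(\tfrac{1}{2p} - \tfrac{1}{4}\right) = \left(n - \tfrac{\alpha}{2}\right)\!\left(\tfrac{1}{2} - \tfrac{1}{p}\right),
\]
which is exactly the exponent announced in the statement, with the multiplicative constant $C$ absorbing the limits of all $O(1)$ factors together with $\lambda$-dependent terms. The case $p = \infty$ is handled in the same way using the second formula of the same corollary: the product of $n$ factors $(Ne/\pi^{3/2})^{1/2}$ gives $N^{n/2}$, each of the $\alpha$ nonzero coordinates adds a $(k/\alpha)^{-1/4}$ times a constant whose limit is $e^{-1/2}\,2^{-1/4}$, and the zero coordinates contribute a pure constant, giving $N^{n/2 - \alpha/4} = N^{(n - \alpha/2)/2}$, in agreement with the general formula at $p = \infty$.

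There is no genuine obstacle here; the argument is careful bookkeeping once the formulas of the preceding corollary are in place. The only mild subtleties are tracking the several constants and their $\lambda$-dependence through the factorization, and noting that $\nu_k$ is a legitimate multi-index only when $\alpha \mid k$, so implicitly the proposition asserts the asymptotic along that subsequence.
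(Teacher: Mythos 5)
Your argument is correct and follows essentially the same route as the paper's proof: substitute $\nu_k$ into formula \eqref{eqnormep}, factor into the $N$-dependent prefactor, the $\alpha$ identical nonzero-coordinate blocks, and the $n-\alpha$ zero-coordinate blocks, then take the same Taylor expansions as in Corollary \ref{cor_harmonic} to extract the $k$-asymptotics. Your observation that $\nu_k$ is a genuine multi-index only when $\alpha\mid k$ (so the stated equivalence is really along that subsequence) is a small but real point that the paper leaves implicit; otherwise the two proofs are the same calculation.
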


\begin{proof}
Using equation \eqref{eqnormep}, we get for all $k\in\mathbb{N}$
\begin{align*}
\|e_{\nu_k}\|_{L^p(\mathbb{C}^n)} = & \left(\frac{2}{p}\right)^{\frac{k}{2}-\frac{n}{p}} \left(2\pi^{\frac{3}{2}}\right)^{\alpha\left(\frac{1}{p}-\frac{1}{2}\right)} N^{-\alpha\left(\frac{1}{p}-\frac{1}{2}\right)} \left(\frac{\frac{p}{\alpha}k+2}{\frac{2}{\alpha}k+2}\right)^{\frac{k}{2}}\\
	& \times \frac{\left(\frac{p}{\alpha}k+2\right)^{\frac{\alpha}{2p}}}{\left(\frac{2}{\alpha}k+2\right)^{\frac{\alpha}{4}}} (2e)^{-\alpha\left(\frac{1}{p}-\frac{1}{2}\right)} \left(\frac{\pi}{N}\right)^{\alpha\left(\frac{1}{p}-\frac{1}{2}\right)} e^{E_{\alpha}(p,k)}
\end{align*}
where $E_{\alpha}(p,k) \in \left[-\frac{\alpha^2}{24k},\frac{\alpha^2}{24p^2k}\right]$.
We notice that
\[
\left(\frac{\frac{p}{\alpha}k+2}{\frac{2}{\alpha}k+2}\right)^{\frac{k}{2}} \underset{N\to\infty}{\sim} \left(\frac{p}{2}\right)^{\frac{k}{2}}e^{\alpha\left(\frac{1}{p}-\frac{1}{2}\right)}.
\]
so
\begin{align*}
\|e_{\nu_k}\|_{L^p(\mathbb{C}^n)} & \underset{N\to\infty}{\sim} C N^{-n\left(\frac{1}{p}-\frac{1}{2}\right)} k^{\frac{\alpha}{2}\left(\frac{1}{p}-\frac{1}{2}\right)}\\
		& \underset{N\to\infty}{\sim} C \lambda^{n\left(\frac{1}{p}-\frac{1}{2}\right)}k^{\left(n-\frac{\alpha}{2}\right)\left(\frac{1}{2}-\frac{1}{p}\right)}.
\end{align*}
As previously, we conclude using that $k$ and $N$ are asymptotically equivalent up to the constant $\lambda$.
\end{proof}

This simple example already shows a difference with the bound of Sogge \cite{sogg88} and Koch \cite{koch05}, as the exponent of $N$ depends linearly on $\frac{1}{p}$. 

\section{Semiclassical analysis of Berezin-Toeplitz operators}\label{sec_quantum}

From now on, we will consider compact manifolds, with the proof of Theorem \ref{th_intro} in mind. We will see in Section \ref{sec_Lp} that the result on compact manifolds implies the case of $\Cm^n$. We first recall the construction of Berezin-Toeplitz operators on Kähler manifolds. Then, the main purpose of this section is to explicit the quantum propagator of these operators, which will be crucial in the proof of Theorem \ref{th_intro}. For that purpose, we will recall geometric tools developed by Charles and Le Floch \cite{char21}.

\subsection{Quantization on Kähler manifolds}

The microlocal and semiclassical analysis of Toeplitz operators on Kähler manifolds was initiated in \cite{bout81}, and is developed in \cite{bord94,guil95,char03a,ma08}. Besides, the asymptotic properties of the Bergman kernel are well-studied now, see for instance \cite{bout75,zeld17}. In order to fix the notations before recalling the construction of Berezin-Toeplitz operators, we give the necessary notions of Kähler geometry. The advancement of this section follows \cite{lef18}.

Let $L \rightarrow M$ be a holomorphic line bundle over a compact Kähler manifold of dimension $n$.

\begin{definition}
Let $\nabla$ be a connection on $L$, the curvature of $\nabla$ is the differential form $\curv(\nabla) \in \Omega^2(M)\otimes\mathbb{C}$ such that for all vector fields $X,Y$ and section $u$ of $L$

\[
\curv(\nabla)(X,Y)u = \nabla_X\nabla_Yu - \nabla_Y\nabla_Xu - \nabla_{[X,Y]}u
\]
If the connection can be written locally as $\nabla = d+\beta$ then its curvature is $\curv(\nabla)=d\beta$.
\end{definition}

\begin{proposition}
\label{thconstr}
Let $s$  be a local non-zero holomorphic section of $L$ and write $H=h(s,s)$. Then the curvature of the Chern connection is given by
\[
\curv(\nabla) = \overline{\partial}\partial(\log(H)).
\]
\end{proposition}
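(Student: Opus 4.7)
The plan is to use the two defining properties of the Chern connection on a holomorphic Hermitian line bundle: compatibility with the Hermitian metric $h$, and compatibility with the holomorphic structure (its $(0,1)$-part coincides with $\bar\partial$). Working in the local holomorphic frame $s$, I would write $\nabla s = \beta \otimes s$ for some complex-valued $1$-form $\beta$, so that $\nabla = d + \beta$ in the trivialisation induced by $s$.

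Next, I would use holomorphicity. Since $s$ is holomorphic and the $(0,1)$-part of $\nabla$ must equal $\bar\partial$, applying $\nabla^{0,1}$ to $s$ gives zero, which forces $\beta$ to be of pure type $(1,0)$. Then I would invoke the Hermitian compatibility
\[
dH = d\,h(s,s) = h(\nabla s, s) + h(s, \nabla s) = \bar\beta\, H + \beta\, H,
\]
and decompose both sides according to bidegree. The $(1,0)$ part yields $\partial H = \beta H$, i.e.\ $\beta = \partial \log H$ (and the $(0,1)$ part gives the conjugate identity automatically, confirming consistency).

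Finally, the curvature is $\curv(\nabla) = d\beta$ since $\beta \wedge \beta = 0$ for a $1$-form on a line bundle. Splitting $d = \partial + \bar\partial$ and using $\partial^2 = 0$ gives
\[
\curv(\nabla) = d(\partial \log H) = \bar\partial \partial \log H,
\]
which is the claimed formula. The only real subtlety is bookkeeping for the conventions on the Hermitian pairing (which slot is conjugate-linear) and verifying that the answer is independent of the choice of local holomorphic section $s$: if $s' = f s$ with $f$ holomorphic and non-vanishing, then $H' = |f|^2 H$ and $\log H' = \log H + \log f + \overline{\log f}$, so $\bar\partial\partial\log H' = \bar\partial\partial\log H$ since $\log f$ is holomorphic and $\overline{\log f}$ is antiholomorphic — this is the invariance check I would include to make the formula globally meaningful.
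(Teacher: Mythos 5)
Your argument is correct, and it is the standard derivation: determine the connection $1$-form $\beta$ in a holomorphic frame from the two axioms of the Chern connection, deduce $\beta = \partial\log H$, and compute $\curv(\nabla) = d\beta = \bar\partial\partial\log H$. Note that the paper itself does not give a proof of this proposition; it is stated as a cited fact from the reference on K\"ahler quantization, so there is no in-paper proof to compare against. Your proof is the one a reader would want to see.

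Two small remarks. First, when you write $dH = h(\nabla s, s) + h(s,\nabla s) = \bar\beta H + \beta H$, you are implicitly assuming $h$ is conjugate-linear in the first slot (the convention the paper uses for $\langle\cdot,\cdot\rangle$ on $L^2$); if instead $h$ were conjugate-linear in the second slot the roles of $\beta$ and $\bar\beta$ in the two terms would swap, but the bidegree decomposition still gives $\partial H = \beta H$ either way, so your conclusion is unaffected — it would be cleaner to say this explicitly rather than leaving it as an unresolved "subtlety". Second, the step $\curv(\nabla)=d\beta$ deserves one more sentence: for a line bundle the endomorphism-valued curvature form is scalar, and the commutator/quadratic term $\beta\wedge\beta$ vanishes automatically; this is already built into the paper's local definition $\curv(\nabla)=d\beta$ when $\nabla=d+\beta$, so you may simply cite that definition instead of re-deriving it. The closing invariance check under $s\mapsto fs$ with $f$ holomorphic and non-vanishing is a good addition and is exactly the reason the formula is well-posed globally.
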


We now suppose $M$ to be a Kähler manifold, we recall that $\Omega(M)$ can then be decomposed as $\Omega^{1,0}M \oplus \Omega^{0,1}M$, the holomorphic and anti-holomorphic parts.

\begin{proposition}
The decomposition $\Omega(M) = \Omega^{1,0}M \oplus \Omega^{0,1}M$ implies a decomposition on the connection
\[
\nabla = \nabla^{1,0} + \nabla^{0,1}.
\]
We are then able to characterise the holomorphic structure with the connection
\[
s \text{ is a local holomorphic section of } L\rightarrow M \Leftrightarrow \nabla^{0,1}(s)=0.	
\]
\end{proposition}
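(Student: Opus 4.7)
My plan would split the statement into the two asserted parts. The first claim, that $\nabla = \nabla^{1,0} + \nabla^{0,1}$, is essentially tautological. For any local section $u$ of $L$, the object $\nabla u$ lies in $\Gamma(T^*M\otimes\Cm\otimes L)$, and the Kähler structure supplies the global splitting $T^*M\otimes\Cm = \Omega^{1,0}M\oplus\Omega^{0,1}M$. I would simply define $\nabla^{1,0}u$ and $\nabla^{0,1}u$ as the two components of $\nabla u$, then check $\Cm$-linearity in $u$ and the appropriate Leibniz rules coming from the decomposition $d=\partial+\overline{\partial}$ on scalar functions. Independence of local trivialisations is automatic because the type splitting on $T^*M\otimes\Cm$ is global.

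For the characterisation of holomorphic sections I would rely on the explicit local description of the Chern connection that underlies Proposition \ref{thconstr}. The plan is to pick a local non-vanishing holomorphic section $s_0$, set $H=h(s_0,s_0)$, and recall that the construction yielding $\curv(\nabla) = \overline{\partial}\partial\log H$ simultaneously produces the connection 1-form $\beta = \partial\log H$, of pure type $(1,0)$, with $\nabla s_0 = \beta\otimes s_0$. In particular $\nabla^{0,1}s_0 = 0$. For an arbitrary smooth local section $s = f s_0$, the Leibniz rule gives
\[
\nabla s = df\otimes s_0 + f\,\nabla s_0 = (\partial f + \overline{\partial} f + f\,\partial\log H)\otimes s_0,
\]
whose $(0,1)$-part is $\nabla^{0,1} s = (\overline{\partial} f)\otimes s_0$. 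Since $s_0$ is a holomorphic trivialisation of $L$, $s$ is holomorphic exactly when $f$ is a holomorphic function, i.e.\ when $\overline{\partial} f = 0$, giving the desired equivalence.

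The main obstacle I foresee is that the statement of Proposition \ref{thconstr} records only the curvature formula, so to make this argument watertight one must reach back into its derivation to extract both the explicit local form $\beta = \partial\log H$ and, crucially, the fact that $\beta$ is of pure type $(1,0)$. Once this is in hand the rest is formal: $\nabla^{0,1}$ coincides with $\overline{\partial}$ in any local holomorphic frame, hence globally on sections of $L$, and being a holomorphic section is precisely being in the kernel of $\overline{\partial}$.
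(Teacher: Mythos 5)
The paper does not actually prove this proposition; it is stated as a standard fact from Kähler geometry, citing \cite{lef18}, and no proof environment follows it in the text. Your blind proof is therefore not competing against an argument in the paper, and should be judged on its own merits.

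On those merits it is correct. The first part (the type splitting of $\nabla$) is indeed a formal consequence of the global decomposition $T^*M\otimes\Cm = \Omega^{1,0}M\oplus\Omega^{0,1}M$, and you handle it appropriately. For the second part your route -- compute $\nabla s_0 = \partial\log H \otimes s_0$ for a local holomorphic frame $s_0$, observe the connection form has pure type $(1,0)$, and propagate through the Leibniz rule to get $\nabla^{0,1}(fs_0) = (\overline{\partial}f)\otimes s_0$ -- is the standard textbook calculation and is complete. One remark worth making: what you flag as a potential obstacle (that Proposition~\ref{thconstr} only records the curvature, so you must reach into its derivation for the form $\beta = \partial\log H$) is really a symptom of the fact that the Chern connection is usually \emph{defined} by the two conditions of metric compatibility and $\nabla^{0,1} = \overline{\partial}_L$. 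Under that definition the second half of the proposition is almost immediate, since $s$ is holomorphic precisely when $\overline{\partial}_L s = 0$. Your explicit local-frame computation proves the same thing and has the advantage of being self-contained relative to what the paper actually states, but the quicker observation is that the equivalence is built into the characterisation of the Chern connection; noting this would make the proof both shorter and conceptually cleaner.
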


\begin{definition}
A pre-quantum line bundle $(L,\nabla,h)\rightarrow M$ is a hermitian complex line bundle whose Chern connection has curvature equal to $-i\omega$.
\end{definition}

From now on, we will consider tensor powers of such a bundle, which we write $L^{\otimes N}$. This new line bundle has an induced metric $h_N$
\[
h_N(p_1\otimes\cdots\otimes p_N,q_1\otimes\cdots\otimes q_N) = \prod\limits_{1\le j\le N} h(p_j,q_j)
\]
and an induced connection $\nabla_N$ such that $\curv(\nabla_N)=N\cdot \curv(\nabla) = -iN\omega$. On the space $\Gamma(L^{\otimes N})$ of smooth sections of $L^{\otimes N}$ we consider the Hermitian product given for all $u,v \in \Gamma(L^{\otimes N})$ by
\[
\langle u,v \rangle_N = \int_M h_N(u,v)\frac{|\omega^n|}{n!}
\]
with associated norm $\|\cdot\|_N$. We then define the quantum space
\[
\mathcal{H}_N = H^0\lpar M,L^{\otimes N}\rpar
\]
of holomorphic sections of $L^{\otimes N}\rightarrow M$. We will sometimes use the notation $\hbar = \frac{1}{N}$, which is the new semi-classical parameter. In particular, the semi-classical limit will correspond to $N\rightarrow +\infty$.

\begin{definition}
Let $L^2(M,L^{\otimes N})$ be the completion of $\Gamma(L^{\otimes N})$ with the scalar product $\langle\cdot,\cdot\rangle_N$, and write $\Pi_N$ the Bergman projection, that is the orthogonal projection from $L^2(M,L^{\otimes N})$ to $\mathcal{H}_N$. For $f\in C^0(M)$, the associated Berezin-Toeplitz operator is
\[
T_N(f) = \Pi_N f = \Pi_N f\Pi_N : \mathcal{H}_N \rightarrow \mathcal{H}_N
\]
\end{definition}

\begin{proposition}
Let $f\in C^0(M)$ then
\begin{itemize}
\item $\|T_N(f)\| \le \|f\|_{L^{\infty}(M)}$ where $\|T_N(f)\|$ is the operator norm on $H_N$.
\item $T_N(f)^* = T_N(\overline{f})$, in particular if $f$ is real-valued, then $T_N(f)$ is self-adjoint.
\end{itemize}
\end{proposition}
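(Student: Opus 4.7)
The plan is to treat each bullet separately and use only two facts about the Bergman projection: it is an orthogonal projection on $L^2(M,L^{\otimes N})$, hence idempotent, self-adjoint, and of operator norm at most $1$; and elements $u\in\mathcal{H}_N$ are fixed by it, $\Pi_N u = u$.

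For the norm bound, I would take $u\in\mathcal{H}_N$ and chain the inequalities
\[
\|T_N(f)u\|_N \;=\; \|\Pi_N(fu)\|_N \;\le\; \|fu\|_N.
\]
Then the pointwise estimate $h_N(fu,fu) = |f|^2\, h_N(u,u) \le \|f\|_{L^\infty(M)}^2\, h_N(u,u)$ combined with the definition of the scalar product $\langle\cdot,\cdot\rangle_N$ gives $\|fu\|_N \le \|f\|_{L^\infty(M)}\|u\|_N$. Taking the supremum over unit vectors in $\mathcal{H}_N$ concludes.

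For the adjoint, I would pick $u,v\in\mathcal{H}_N$ and compute
\[
\langle T_N(f)u, v\rangle_N \;=\; \langle \Pi_N(fu), v\rangle_N \;=\; \langle fu, \Pi_N v\rangle_N \;=\; \langle fu, v\rangle_N,
\]
using self-adjointness of $\Pi_N$ and then $\Pi_N v = v$. Under the sesquilinearity convention fixed in the Notations (antilinear in the first slot), $h_N$ has the same convention, so $h_N(fu,v)=\overline{f}\,h_N(u,v)=h_N(u,\overline{f}v)$, giving $\langle fu,v\rangle_N = \langle u, \overline{f}v\rangle_N$. A symmetric application of the projection on the right side, using $\Pi_N u = u$, yields
\[
\langle u, \overline{f}v\rangle_N \;=\; \langle \Pi_N u, \overline{f}v\rangle_N \;=\; \langle u, \Pi_N(\overline{f}v)\rangle_N \;=\; \langle u, T_N(\overline{f})v\rangle_N,
\]
so $T_N(f)^* = T_N(\overline{f})$ on $\mathcal{H}_N$. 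Self-adjointness for real-valued $f$ is then immediate. There is no genuine obstacle here; the only point of care is being consistent with the convention that $\langle\cdot,\cdot\rangle_N$ (equivalently $h_N$) is antilinear in the first argument, so that the complex conjugation on $f$ lands on the correct side.
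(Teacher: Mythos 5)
Your proof is correct, and it is the standard argument one would write down. Note that the paper itself does not include a proof of this proposition; it is stated as a recalled fact at the start of Section~3.1, where the author cites Le Floch's lecture notes as the source of the results in that subsection. Your two steps --- using that $\Pi_N$ is an orthogonal projection (hence of operator norm at most one, self-adjoint, and the identity on $\mathcal{H}_N$) together with the pointwise bound on the Hermitian metric --- are exactly the ingredients that make the result trivial, and the observation about matching the antilinear-in-the-first-slot convention so that $\overline{f}$ lands on the correct side is the only place where a sign or conjugate could go astray; you handle it cleanly. The one slightly loose phrase is ``$h_N$ has the same convention'': strictly, $h_N$ is a fiberwise Hermitian form and one should specify in which slot it is conjugate-linear, but since $\langle u,v\rangle_N=\int_M h_N(u,v)\,|\omega^n|/n!$ must agree with the convention fixed in the Notations, this is forced, and your computation is consistent with it.
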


\subsection{Lagrangian states}

In order to study the spectral properties of Toeplitz operators, it is convenient to write both the operators and their quasimodes as Lagrangian states. We will use the constructions and notations of Charles and Le Floch \cite{char21}.

Let $L\rightarrow M$ be a pre-quantum line bundle over a Kähler manifold, and $L'\rightarrow M$ be a complex line bundle. From now on, the quantum space is $\mathcal{H}_N = H^0(M,L^{\otimes N}\otimes L')$.

\begin{lemma}[\cite{char03} Chapter 2]
Let $\Gamma$ be a Lagrangian submanifold of $M$. For $x\in \Gamma$, there exists a neighbourhood $U\subset M$ of $x$ and a section $F\; :\; U \rightarrow L$ such that $F_{U\cap \Gamma}$ is flat, unitary and such that for every holomorphic vector field $Z$
\[
\nabla_{\overline{Z}}F=0 \quad mod \; \mathcal{I}^{\infty}(\Gamma\cap U)
\]
where $\mathcal{I}^{\infty}(\Gamma\cap U)$ is the ideal of $C^{\infty}$ functions which vanish to any order along $\Gamma\cap U$. Furthermore, if $F' \; :\; U' \rightarrow L$ satisfies the same hypothesis and if $U\cap U'$ is connected then there exists $a\in\Rm$ such that
\[
e^{ia}F=F' \quad mod \; \mathcal{I}^{\infty}(\Gamma\cap U\cap U').
\]
\end{lemma}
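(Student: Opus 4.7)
The plan is to work in a local holomorphic trivialisation of $L$ and reduce the construction to solving a single scalar equation. Shrinking $U$ if necessary, I would pick a non-vanishing local holomorphic section $s$ of $L$ on $U$ and set $H = h(s,s)$, so that the Chern connection satisfies $\nabla s = \partial(\log H)\otimes s$. Any section on $U$ can then be written $F = e^{\phi}s$ for a smooth $\phi : U\to\Cm$, and the three required conditions translate respectively into (a) $2\operatorname{Re}(\phi) + \log H = 0$ on $\Gamma\cap U$, (b) $(d\phi + \partial\log H)|_{T\Gamma} = 0$ along $\Gamma\cap U$, and (c) $\overline{\partial}\phi \in \mathcal{I}^{\infty}(\Gamma\cap U)$.

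The next step is to solve (a) and (b) on $\Gamma$. The integrability condition for (b) is the vanishing of $d\bigl(\partial\log H|_{T\Gamma}\bigr)$, which follows from the Lagrangian hypothesis: by Proposition \ref{thconstr} and the pre-quantum condition, $d\partial\log H = \overline{\partial}\partial\log H = -i\omega$, which restricts to zero on $T\Gamma$. A smooth $\phi_0 : \Gamma\cap U \to \Cm$ solving (b) then exists, uniquely up to an additive complex constant on each connected component. Taking real parts in (b) and using that $\operatorname{Re}(\partial\log H) = \tfrac{1}{2} d\log H$, I get $d(\operatorname{Re}(\phi_0) + \tfrac{1}{2}\log H)|_{T\Gamma} = 0$, so this quantity is locally constant on $\Gamma\cap U$; after shrinking $U$ so that $\Gamma\cap U$ is connected, I can fix the real additive constant to enforce (a), leaving only a real constant $a$ in the imaginary part as the free parameter. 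This is precisely the $e^{ia}$ ambiguity in the statement.

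Next, I would extend $\phi_0$ to $\phi$ on a neighbourhood of $x$ in $M$ with $\overline{\partial}\phi$ vanishing to infinite order on $\Gamma$. Since $M$ is Kähler and $\Gamma$ is Lagrangian, the identity $\omega(\cdot,\cdot) = g(J\cdot,\cdot)$ forces $T_xM = T_x\Gamma \oplus J T_x\Gamma$ at every $x \in \Gamma$, so $\Gamma$ is totally real. In local coordinates adapted to this splitting, the condition $\overline{\partial}\phi \in \mathcal{I}^\infty(\Gamma)$ becomes a triangular recursion that determines, at each order, the next transverse Taylor coefficient of $\phi$ from the tangential derivatives already chosen; a Borel summation across the transverse directions then produces a smooth $\phi$ with the prescribed formal Taylor expansion along $\Gamma$, and setting $F = e^\phi s$ yields the desired section.

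For uniqueness, given $F,F'$ satisfying the hypotheses with $U\cap U'$ connected, the ratio $F'/F$ is a smooth non-vanishing function on $U\cap U'$ of the form $e^{\psi}$, where $\overline{\partial}\psi \in \mathcal{I}^\infty(\Gamma)$, $d\psi|_{T\Gamma}=0$, and $|e^\psi|=1$ on $\Gamma$; the first two conditions force $\psi$ to be constant on $\Gamma\cap U\cap U'$ modulo flat terms, unitarity makes this constant purely imaginary, and the almost-holomorphicity propagates the relation into $e^{ia}F = F'$ modulo $\mathcal{I}^\infty(\Gamma\cap U\cap U')$. The main obstacle is the almost-holomorphic extension step: one must check that the recursion for the transverse Taylor coefficients is solvable at every order, which is exactly where the totally real character of $\Gamma$ (i.e., the Kähler--Lagrangian compatibility) enters, before Borel's theorem can be invoked to assemble the formal series into a genuine smooth function.
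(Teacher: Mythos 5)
Your reduction is correct and, as far as I can tell, coincides with Charles's own construction; note that the paper does not actually prove this lemma (it cites \cite{char03}, Chapter 2), so there is no in-paper argument to compare against line by line. All the essential ingredients you use — working in a holomorphic trivialisation so the Chern connection becomes $\nabla s=\partial(\log H)\otimes s$, translating flat/unitary/almost-holomorphic into the three conditions on $\phi$, invoking the pre-quantum identity $\overline{\partial}\partial\log H=-i\omega$ together with the Lagrangian hypothesis to make $\iota^*\partial\log H$ closed, the real-part observation that reconciles the unitary normalisation with the flatness equation, and the totally real character of a Lagrangian in a Kähler manifold to build the almost-analytic extension — are exactly the standard ones. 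Two small precisions worth recording: when you integrate the closed $1$-form $\iota^*\partial\log H$ on $\Gamma\cap U$ you need $\Gamma\cap U$ simply connected (e.g.\ contractible), not merely connected; since $U$ may be taken to be a small coordinate ball around $x$ this is harmless, but \emph{connected} alone is not enough. Likewise, in the uniqueness step the phrase ``$U\cap U'$ connected'' in the statement is really used through the connectedness of $\Gamma\cap U\cap U'$, which is what makes $\psi$ a single constant there; you silently (and correctly) work with the latter.

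On the one step you flagged as the main obstacle: the almost-analytic extension along a maximal totally real submanifold is indeed the crux, and your description of it is accurate. The relevant classical fact is that if $\Gamma$ is totally real of real dimension $n$ in a complex $n$-manifold, then for any smooth $\phi_0$ on $\Gamma$ there is a smooth $\phi$ with $\phi|_\Gamma=\phi_0$ and $\overline{\partial}\phi\in\mathcal{I}^\infty(\Gamma)$, unique modulo $\mathcal{I}^\infty(\Gamma)$; the solvability at each order of the formal recursion rests precisely on the invertibility of the matrix $(\partial\rho_j/\partial\overline z_k)$ built from local real defining functions $\rho_j$ of $\Gamma$, which is the totally real condition. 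The same uniqueness statement is what closes your last paragraph: since $\chi=\psi-ia$ vanishes on $\Gamma$ and $\overline\partial\chi\in\mathcal{I}^\infty(\Gamma)$, the recursion forces $\chi\in\mathcal{I}^\infty(\Gamma)$, giving $e^{ia}F=F'$ modulo $\mathcal{I}^\infty(\Gamma\cap U\cap U')$. So the proposal is complete in substance; the gaps you named are filled by citing this standard extension lemma rather than re-deriving it.
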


It is also possible to prove that on a neighbourhood of $\Gamma$ inside $U$, $|F|$ is strictly lower than $1$ outside $\Gamma$. Hence, we can modify it outside that neighbourhood such that $|F|<1$ on $U\backslash\Gamma$

Fix a real segment $I$ which will be the domain of the time parameter. Let $\mathbb{C}_I$ be the trivial complex line bundle over $I$, $\Gamma$ a closed submanifold of $I\times M$ and $s\in C^{\infty}(\Gamma,\mathbb{C}_I \boxtimes L)$ such that
\begin{itemize}
\item the application $q \; :\; \Gamma \rightarrow I,\; (t,x) \mapsto t$ is a proper submersion, so that for every $t\in I$, the fibre $\Gamma_t = \Gamma\cap\lpar \{t\}\times M \rpar$ is a submanifold of $M$.
\item For all $t\in I$, $\Gamma_t$ is a Lagrangian submanifold of $M$ and the restriction of $s$ to $\Gamma_t$ is flat and unitary.
\end{itemize}
According to the previous lemma and remark, it is now possible to consider a section $F$ of $C_I \boxtimes L$ such that $F|_{\Gamma} = s$, $\overline{\partial}F$ vanishes at every order on $\Gamma$, and $|F|<1$ outside $\Gamma$.

\begin{definition}[\cite{char21} Chapter 2.1]
\label{def_Lagr_state}
A Lagrangian state family associated to $(\Gamma,s)$ is a family $(\psi_N)\in C^{\infty}(I,\mathcal{H}_N)$ such that for every $k\in\mathbb{N}$,
\[
\psi_N(t,x) = \left(\frac{N}{2\pi}\right)^{\frac{n}{4}}F^N(t,x)\sum\limits_{l=0}^k N^{-l}a_l(t,x)+R_k(t,x,N)
\]
where
\begin{itemize}
\item each $a_l$ is a section of $\mathbb{C}_I \boxtimes L'$ such that $\overline{\partial}a_l$ vanish at every order on $\Gamma$.
\item For all integers $p$ and $k$, $\partial_t^pR_k = O(N^{p-k-1})$ uniformly on every compact of $I\times M$.
\end{itemize}
\end{definition}

In fact, a Lagrangian section is entirely characterised modulo $O(\hbar^{\infty})$ by the values of the section $a_l$ restricted to $\Gamma$. Indeed, according to the Section 2.2 of \cite{char03}, for every family $(b_l)_{l\in\mathbb{N}}\in C^{\infty}(\Gamma,\mathbb{C}_I \boxtimes L')$, there exists a Lagrangian state $\psi_N$ such that for all $y\in\Gamma$ and $k\in\mathbb{N}$,
\[
\psi_N(y) = \left(\frac{N}{2\pi}\right)^{\frac{n}{4}} s^N(y) \sum\limits_{0\le l\le k}N^{-l}b_l(y) + O\left(N^{-k-1}\right).
\]
Furthermore, $(\psi_N)$ is unique up to a family $(\Phi_N)\in C^{\infty}(I,H_N)$ satisfying
\[
\lnor \left(\frac{d}{dt}\right)^p \Phi_N(t) \rnor = O\left(N^{-k}\right)
\]
for all $p$ and $k$ uniformly on any compact of $I$. We call total symbol of$\psi_N$ the formal series $\sum N^{-l} a_l$ and principal symbol of $\psi_N$ the coefficient $a_0$.

Until the end of this section we will consider a time dependant Berezin-Toeplitz operator $T_{N,t} = \Pi_N(f(\cdot,t,N))$ such that the function $f$ has an expansion in $N$ like
\[
f(\cdot,\cdot,N)=f_0+N^{-1}f_1+\cdots
\]
with coefficients $f_l\in C^{\infty}(I\times M)$. We call $H_t=f_0(t,.)$ the principal symbol of $T_{N,t}$ and $H^{sub}_t=f_1(t,.)+\frac{1}{2}\Delta f_0(t,.)$ the sub-principal symbol.

\begin{proposition}[\cite{char03} Chapter 2.4]
\label{prop_Toeplitz_Lagr}
Let $(\psi_N)$ a Lagrangian state as in Definition \ref{def_Lagr_state} associated to $(\Gamma,s)$, then $(T_{N,t}\psi_N)$ is also a Lagrangian state associated to $(\Gamma,s)$. In addition, its principal symbol is $H_0|_{\Gamma}\cdot b_0$ where $b_0$ is the principal symbol of $(\psi_N)$.
\end{proposition}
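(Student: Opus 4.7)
The plan is to first multiply $\psi_N$ pointwise by the total symbol $f(\cdot,t,N)$, then understand how the Bergman projection $\Pi_N$ acts on the resulting expression. The key observation is that pointwise multiplication by a smooth function preserves the Lagrangian state structure at the level of the formal expansion, and that $\Pi_N$ applied to a state of the form $F^N\cdot b$ (with $\overline{\partial} F$ vanishing to infinite order on $\Gamma$ and $|F|<1$ off $\Gamma$) again produces a Lagrangian state of the same type up to $O(N^{-\infty})$.

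For the first step, I would write
\[
f(\cdot,t,N)\psi_N(t,x) = \lpar\frac{N}{2\pi}\rpar^{\frac{n}{4}} F^N(t,x) \sum_{l=0}^{k} N^{-l} c_l(t,x) + \widetilde{R}_k(t,x,N),
\]
with $c_l = \sum_{j+m=l} f_j\, a_m$; in particular $c_0 = f_0\cdot a_0 = H_t\cdot a_0$. The $\overline{\partial}$-flatness to infinite order on $\Gamma$ of each $a_m$, together with smoothness of the $f_j$, ensures the same property for each $c_l$, and the remainder $\widetilde{R}_k$ inherits the differentiability estimates of $R_k$.

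The heart of the argument is the second step: applying $\Pi_N$ to $F^N\cdot b$. Using the off-diagonal Bergman kernel expansion, locally of the form
\[
\Pi_N(x,y) \sim \lpar\frac{N}{2\pi}\rpar^{n} E(x,y)^N g(x,y,N),
\]
where $E$ is almost holomorphic in $x$ and almost antiholomorphic in $y$, with $E(x,x)=1$, $|E(x,y)|<1$ off the diagonal, and $g$ admitting a classical expansion in $N^{-1}$, the expression $\Pi_N(F^N b)(x)$ becomes an oscillatory integral with phase essentially $N\log(E(x,y)F(y))$ in a suitable trivialisation. The modulus conditions on $E$ and $F$ localise the integral modulo $O(N^{-\infty})$ to a neighbourhood of the diagonal of $\Gamma_t\times\Gamma_t$. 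Complex stationary phase in the directions transverse to $\Gamma_t$ then gives back a Lagrangian state associated to $(\Gamma,s)$: the stationary set is $y=x\in\Gamma_t$, the transverse Hessian is non-degenerate (this follows from strict plurisubharmonicity of the Kähler potential together with the Lagrangian condition), and the prefactor $(2\pi/N)^{n/2}$ of stationary phase combines with the kernel normalisation $(N/2\pi)^n$ to reproduce exactly $(N/2\pi)^{n/4}F^N(x)$ in front of an asymptotic expansion in $N^{-1}$.

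The principal symbol is then read off the leading stationary phase contribution, which is the value of the integrand at the critical point: at $x\in\Gamma_t$ this is $c_0(t,x) = H_0(t,x)\, a_0(t,x)$, so the principal symbol of $T_{N,t}\psi_N$ on $\Gamma_t$ is $H_0|_{\Gamma_t}\cdot b_0$. The main obstacle I expect is verifying in detail that the phase $E(x,y)F(y)$ is stationary on the diagonal of $\Gamma_t\times\Gamma_t$ to infinite order, which relies crucially on the flatness and unitarity of $s$ along $\Gamma_t$ together with $\nabla_{\overline{Z}}F \equiv 0$ modulo $\mathcal{I}^{\infty}(\Gamma)$ and the analogous almost-holomorphic properties of $E$; and the careful tracking of the $N^{-1}$ expansion through stationary phase to confirm that the output fits Definition~\ref{def_Lagr_state} with the same data $(\Gamma,s)$.
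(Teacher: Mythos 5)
The paper cites this proposition from Charles's work without reproducing a proof, so I am assessing your sketch against the cited argument. Your two-step plan---multiply by the total symbol $f(\cdot,t,N)$, then apply $\Pi_N$ using the off-diagonal Bergman kernel expansion and complex stationary phase---is indeed the method used by Charles, and your identification of the principal symbol as $H_0|_\Gamma\cdot b_0$ is correct.

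There is, however, a false claim in step~1 and a normalisation slip in step~2. Your assertion that $\overline{\partial}$-flatness of the $a_m$ ``together with smoothness of the $f_j$'' ensures that each $c_l$ satisfies the same $\overline{\partial}$-flatness is wrong: $\overline{\partial}(f_j a_m) = (\overline{\partial}f_j)\,a_m + f_j\,\overline{\partial}a_m$, and the first summand is generically nonzero on $\Gamma$ because the $f_j$ are merely smooth, not almost-holomorphic. So $f\cdot\psi_N$ is \emph{not} a Lagrangian state in the sense of Definition~\ref{def_Lagr_state}. The correct statement, which your step~2 effectively supplies, is stronger than what you asserted: $\Pi_N(F^N c)$ is a Lagrangian state for \emph{any} smooth amplitude $c$, and the almost-holomorphicity of the output amplitudes is manufactured by the projector in the stationary-phase expansion; at leading order only the restriction $c_0|_\Gamma = H_0|_\Gamma a_0$ survives, so the conclusion stands even though the intermediate claim does not. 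In step~2, the stationary phase in $y$ runs over the full $2n$ real directions---the critical point $y=x\in\Gamma_t$ is nondegenerate in \emph{all} of them, not only in the $n$ transverse to $\Gamma_t$---yielding the factor $(2\pi/N)^n$, which exactly cancels the $(N/2\pi)^n$ of the Bergman kernel and leaves the $(N/2\pi)^{n/4}$ already present in $\psi_N$ intact. Your proposed $(2\pi/N)^{n/2}$ would leave an extraneous $(N/2\pi)^{n/2}$ and spoil the normalisation required by Definition~\ref{def_Lagr_state}.
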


Since $s(t,\cdot)$ is flat for all $t\in I$, there exists $\alpha\in C^{\infty}(\Gamma,\mathbb{C})$ such that $\nabla s=\alpha dt \otimes s$, where the covariant derivative is induced by the usual derivative over $\mathbb{C}_I$ and the connection of $L$. What's more, $s$ is unitary for all time, thus by differentiating $1 = h(s,s)$, with $h$ the natural metric over $\mathbb{C}_I \otimes L$, we get
\[
0 = 2 Re(h(\nabla s,s)) = 2 Re(\alpha) dt.
\]
Hence there exists $\tau\in C^{\infty}(\Gamma,\mathbb{R})$ such that $\nabla s = i\tau dt \otimes s$.

\begin{proposition}[\cite{char21} Chapter 2.2]
\label{prop_time_Lagr}
Let $(\psi_N)$ a Lagrangian state like in Definition \ref{def_Lagr_state} The family $((iN)^{-1}\partial_t\psi_N)$ is a Lagrangian state associated to $(\Gamma,s)$ with total symbol $(\tau+\hbar P)b$ where $P=\nabla_Z$ and $Z(t,x)\in T_{(t,x)}\Gamma$ is the projection of $\frac{\partial}{\partial t}$ on $T_{(t,x)}\Gamma$ parallel to  $T^{0,1}_x M$.
\end{proposition}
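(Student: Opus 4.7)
The plan is to compute $(iN)^{-1}\partial_t \psi_N$ directly from the asymptotic expansion of Definition \ref{def_Lagr_state}, using the Leibniz rule for the Chern connection, and then recognise the result as a Lagrangian state associated to $(\Gamma,s)$. Since $|F|=1$ on $\Gamma$ and $|F|<1$ outside, $F$ is non-vanishing in a tubular neighbourhood of $\Gamma$; choosing a local holomorphic frame $e$ of $L$ and writing $F = f e$, the induced connection on $L^{\otimes N}$ satisfies $\nabla_{\partial_t}(F^N) = N g F^N$ for some smooth local function $g$. The flatness hypothesis $\nabla s = i\tau\,dt\otimes s$ on $\Gamma$, combined with $F|_\Gamma = s$, forces $g|_\Gamma = i\tau$. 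Applying the Leibniz rule to $\psi_N = (N/2\pi)^{n/4} F^N \sum_l N^{-l}a_l + R_k$ then gives
\[
(iN)^{-1}\partial_t \psi_N = \lpar\frac{N}{2\pi}\rpar^{\frac{n}{4}} F^N \lpar \frac{g}{i} \sum_l N^{-l}a_l + \frac{1}{iN}\sum_l N^{-l}\nabla_{\partial_t}a_l \rpar + (iN)^{-1}\partial_t R_k,
\]
and on $\Gamma$ the prefactor $g/i$ equals $\tau$, producing the expected principal symbol $\tau a_0$.

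Next, I would interpret $\nabla_{\partial_t} a_l$ on $\Gamma$ using the decomposition $\partial_t = Z + W$ with $Z \in T_{(t,x)}\Gamma\otimes\Cm$ and $W \in T^{0,1}_x M$. Then $\nabla_{\partial_t}a_l = \nabla_Z a_l + \nabla_W a_l$, and since $W$ is of type $(0,1)$ the second summand equals $(\overline{\partial} a_l)(W)$, which vanishes to infinite order on $\Gamma$ by the definition of a Lagrangian state. Thus on $\Gamma$ one has $\nabla_{\partial_t}a_l \equiv P a_l$ modulo the ideal $\mathcal{I}^{\infty}(\Gamma)$, with $P = \nabla_Z$. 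Collecting the two contributions and re-indexing the expansion in $\hbar = N^{-1}$ then identifies the total symbol on $\Gamma$ with $(\tau + \hbar P)b$, where $b = \sum_l \hbar^l a_l|_\Gamma$ is the total symbol of $\psi_N$.

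Finally, I would verify the three conditions of Definition \ref{def_Lagr_state}: the shape $(N/2\pi)^{n/4}F^N\sum_l N^{-l}b_l + R'_k$ follows from the formula above after expanding $g/i$ in a Taylor series transversally to $\Gamma$ and regrouping powers of $N^{-1}$; each new coefficient $b_l$ is a smooth section whose $\overline{\partial}$ still vanishes to infinite order on $\Gamma$, which is inherited from the corresponding property of $F$ and of the $a_l$; and the remainder bound $\partial_t^p R'_k = O(N^{p-k-1})$ follows by applying $\partial_t$ to the remainder estimate in Definition \ref{def_Lagr_state}. The main obstacle lies in this last verification: one has to check carefully that the new sections $b_l$ really satisfy the $\overline{\partial}$-vanishing condition and that every truncation error is absorbed into a remainder of the correct order, uniformly on compacts of $I\times M$. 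This ultimately reduces to the fact that multiplication by smooth functions, and division by the non-vanishing $F$, both preserve the class of Lagrangian states.
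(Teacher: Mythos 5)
There is no in-house proof to compare against here: the paper states this proposition as a citation of \cite{char21}, Chapter 2.2, and does not reproduce an argument. Your direct calculation is the natural route, and the key mechanism — splitting $\partial_t = Z + W$ with $Z\in T_{(t,x)}\Gamma\otimes\Cm$, $W\in T^{0,1}_xM$, so that $\nabla_W a_l = (\overline\partial a_l)(W)$ vanishes to infinite order on $\Gamma$ — is exactly the point. Three places need tightening.

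First, you derive $g|_\Gamma = i\tau$ from $F|_\Gamma = s$ and $\nabla s = i\tau\,dt\otimes s$ alone, but $\nabla_{\partial_t}F$ on $\Gamma$ is not determined by the restriction of $F$ to $\Gamma$: $\partial_t$ is generally transverse to $\Gamma$. You should apply your own decomposition here as well: $\nabla_{\partial_t}F = \nabla_Z F + \nabla_W F$, where $\nabla_W F = (\overline\partial F)(W)\in\mathcal I^\infty(\Gamma)$ and $\nabla_Z F|_\Gamma = \nabla_Z s = i\tau\,dt(Z)\,s = i\tau\,s$ because $W$ has no $dt$-component, so $dt(Z)=1$. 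Only then is $g|_\Gamma = i\tau$ justified.

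Second, track the factor of $i$ to the end. Your own display has the prefactor $\frac{1}{iN}$ in front of $\sum_l N^{-l}\nabla_{\partial_t}a_l$, so after re-indexing in $\hbar = N^{-1}$ the operator multiplying $\hbar$ is $\frac{1}{i}\nabla_Z$, not $\nabla_Z$: your computation yields total symbol $(\tau + \frac{\hbar}{i}\nabla_Z)b$. Compare Proposition \ref{thzeta}, which writes $\frac{1}{i}\nabla_Y$; the $\frac{1}{i}$ is genuinely present. You should not assert the identification with $(\tau+\hbar\nabla_Z)b$ as stated without flagging this mismatch — either $P$ is meant to absorb the $\frac{1}{i}$, or the statement contains a small slip, but your derivation does not match the literal formula $P = \nabla_Z$.

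Third, the ``main obstacle'' you flag at the end — that the repackaged coefficients $b_l = \frac{g}{i}a_l + \frac{1}{i}\nabla_{\partial_t}a_{l-1}$ might fail the $\overline\partial$-vanishing condition because $\overline\partial g$ need not vanish on $\Gamma$ — is handled not by a delicate check but by the remark immediately following Definition \ref{def_Lagr_state}: a Lagrangian state associated to $(\Gamma,s)$ is uniquely determined modulo $O(N^{-\infty})$ by the restrictions $b_l|_\Gamma$. Having computed these restrictions on $\Gamma$ (which is what the decomposition gives you), replace each $b_l$ by an extension with $\overline\partial b_l\in\mathcal I^\infty(\Gamma)$; the difference is absorbed into the remainder, whose estimate $\partial_t^p R_k' = O(N^{p-k-1})$ follows from the one for $R_k$ by applying $(iN)^{-1}\partial_t$. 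This short-circuits the verification rather than requiring it.
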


In the next steps, we will consider pre-quantum lifts of Hamiltonian flows, we recall the definition now.

\begin{definition}[\cite{char21} Chapter 1.1]
\label{defrelev}
Suppose $I$ is a real segment, consider $H_t \in C^1(I\times M,\mathbb{R})$ a function, $X_t$ its Hamiltonian vector field and $\phi_t$ the associated flow. We call prequantum lift of $\phi_t$ a time-parametrized group $\phi_t^L$ of diffeomorphisms of $L$, which preserves the connection and the metric, and such that for all time $t$
\[
\pi \circ \phi_t^L = \phi_t \circ \pi.
\]
\end{definition}

There is no unique prequantum lift as one expression gives a family of solutions by multiplying it by a complex number of modulus $1$. Here, we fix a convention.

\begin{proposition}[\cite{kost70}]
\label{th_relev}
We write $T_t^L(x) : L_x \rightarrow L_{\phi_t(x)}$ the parallel transport along $\phi_t$, then a prequantum lift of the Hamiltonian flow $\phi_t$ associated to $H_t$ is given by
\[
\phi_t^L(x) = e^{\frac{1}{i}\int_0^tH_r\circ\phi_r(x)dr}T_t^L(x).
\]
\end{proposition}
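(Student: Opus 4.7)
The plan is to verify that the formula $\phi_t^L(x)=e^{\frac{1}{i}\int_0^tH_r\circ\phi_r(x)\,dr}\,T_t^L(x)$ meets the three conditions in Definition~\ref{defrelev}: that it covers $\phi_t$, preserves the hermitian metric $h$, and preserves the Chern connection $\nabla$.

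The first two conditions are essentially immediate. Since $T_t^L(x):L_x\to L_{\phi_t(x)}$ is linear and multiplication by a scalar keeps vectors in the same fiber, we have $\pi\circ\phi_t^L=\phi_t\circ\pi$. For the metric, the Chern connection is hermitian, so $T_t^L(x)$ is already fiberwise unitary; moreover the prefactor $\exp\bigl(\tfrac{1}{i}\int_0^tH_r\circ\phi_r(x)\,dr\bigr)$ has modulus one because $H_r$ is real-valued, so the composition remains a fiberwise isometry. The group/flow property is similarly checked directly from the cocycle identity $T_{t+s}^L(x)=T_t^L(\phi_s(x))T_s^L(x)$ together with the additivity of the phase integral.

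The nontrivial step is preservation of the connection. The plan is to differentiate the formula in $t$ and recognise the resulting vector field on $L$ as the Kostant--Souriau prequantum vector field of $H_t$, whose preservation of $\nabla$ is a classical consequence of the prequantum identity $\curv(\nabla)=-i\omega$ together with Hamilton's equation $\iota_{X_t}\omega=dH_t$. Concretely, in a local trivialization where $\nabla=d+\beta$ with $d\beta=-i\omega$, parallel transport along $r\mapsto\phi_r(x)$ acts by multiplication by $\exp\bigl(-\int_0^t\beta(X_r)\circ\phi_r\,dr\bigr)$, so $\phi_t^L$ takes the form
\[
\phi_t^L(x,\xi)=\Bigl(\phi_t(x),\ \exp\Bigl(-\int_0^t\bigl(iH_r+\beta(X_r)\bigr)\circ\phi_r(x)\,dr\Bigr)\xi\Bigr).
\]
Differentiating at $t=0$ produces a vector field on $L$ whose horizontal component is the horizontal lift of $X_0$ and whose vertical component is multiplication by $-iH_0$. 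Verifying that this vector field preserves $\nabla$ reduces, via Cartan's formula $\mathcal{L}_{X_0}\beta=\iota_{X_0}d\beta+d(\iota_{X_0}\beta)$ combined with $d\beta=-i\omega$, exactly to Hamilton's equation $\iota_{X_0}\omega=dH_0$. Repeating this argument at every time (or integrating in $t$) then yields the full statement.

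The main obstacle is not conceptual but notational: $\beta$ is only defined locally, and one has to track how it behaves under both the flow and the change of trivialization, so that the local identity above glues to a global statement. A cleaner, coordinate-free route is to transfer the problem to the associated principal $\Ug(1)$-bundle $P\to M$, on which the Chern connection is encoded by a globally defined real one-form $\alpha$ with $d\alpha=\pi^*\omega$; the Lie derivative computation can then be carried out once and for all on $P$ and pushed back to $L$.
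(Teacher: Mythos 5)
Note first that the paper does not actually prove this proposition: it simply cites Kostant \cite{kost70} and moves on. So there is no in-text argument to compare against, and the proposal should be judged on its own merits. Your overall strategy is the right one and is essentially Kostant's: check the three conditions of Definition~\ref{defrelev}. The first two (covering $\phi_t$, preserving $h$) are indeed immediate, and the local expression $\phi_t^L(x,\xi)=(\phi_t(x),\,e^{-\int_0^t(iH_r+\beta(X_r))\circ\phi_r\,dr}\,\xi)$ and the identification of the infinitesimal generator with the Kostant--Souriau field $X_0^{\mathrm{hor}}-iH_0\,\xi\partial_\xi$ are both correct.

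There is, however, a sign inconsistency in the one step that actually carries the content of the proof. Carrying out the linearisation you propose, connection-preservation in the trivialisation reads $\phi_t^*\beta-\beta=-d\log c_t$, hence at $t=0$ one gets $\mathcal{L}_{X_0}\beta = d\bigl(iH_0+\beta(X_0)\bigr)$; on the other hand Cartan gives $\mathcal{L}_{X_0}\beta=\iota_{X_0}d\beta+d(\iota_{X_0}\beta)=-i\,\iota_{X_0}\omega+d(\beta(X_0))$. Equating the two yields $\iota_{X_0}\omega=-dH_0$, not $\iota_{X_0}\omega=dH_0$ as you wrote. (A one-dimensional check with $\omega=dq\wedge dp$, $\beta=ip\,dq$, $H=q$ confirms this: $X_H=\partial_p$ works, $X_H=-\partial_p$ does not.) Concretely, with the paper's convention $\curv(\nabla)=d\beta=-i\omega$ and the phase $e^{\frac{1}{i}\int H}=e^{-i\int H}$, the compatible sign of Hamilton's equation is $\iota_{X_H}\omega=-dH$; the same slip propagates to your principal-bundle remark, where you would in fact find $d\alpha=-\pi^*\omega$. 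This is fixable, but since the whole point of the verification is to see the phase, the curvature normalisation and Hamilton's equation lock together consistently, the sign needs to be pinned down rather than asserted. A second, smaller point: the cocycle identity $T_{t+s}^L(x)=T_t^L(\phi_s(x))T_s^L(x)$ as you state it only holds for autonomous $H$; for time-dependent $H_t$ the flow is a two-parameter family $\phi_{t,s}$, and the phase additivity needs the corresponding two-parameter cocycle.
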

From now on we will call this expression ''the prequantum lift''. Let us suppose $(\Gamma,s)$ is obtained by propagating a Lagrangian submanifold $\Gamma_0$ of $M$ and a unitary flat section $s_0$ of $L \rightarrow \Gamma_0$ by a Hamiltonian flow. For a function $H_t$ let us consider its Hamiltonian flow $\phi_t$ and $\phi_t^L$ its pre-quantum lift defined in \ref{defrelev}. We denote,
\begin{align*}
\Gamma(t) & = \Gamma_t = \phi_t(\Gamma_0)\\
s(t,\phi_t (x)) & = s_t(\phi_t (x)) = \phi_t^L(x)(s_0(x))
\end{align*}
$s_t$ then stays flat and unitary for all time by theorem \ref{th_relev}. Let us write  $Y(t,x) =\frac{\partial}{\partial_t} + X_t(x)$ where $X_t$ is the Hamiltonian vector field of $H_t$. We can combine Proposition \ref{prop_Toeplitz_Lagr} and Proposition \ref{prop_time_Lagr} to describe the action on Lagrangian states of the operator $\left(\frac{1}{iN}\frac{\partial}{\partial_t} + T_{t,N}\right)$.

\begin{proposition}[\cite{char21} Chapter 2.2]
\label{thzeta}
Let $(\psi_N)$ a Lagrangian state associated to $(\Gamma,s)$ with total symbol $b= \sum N^{-l} b_l$. Then $\left(\frac{1}{iN}\frac{\partial \psi_N}{\partial_t} + T_{t,N}\psi_N\right)$ is a Lagrangian state associated to $(\Gamma,s)$ with total symbol $\frac{1}{N}\left(\frac{1}{i}\nabla_Y +\zeta\right)b_0 +O\left(\hbar^2\right)$ where $\zeta \in C^{\infty}(\Gamma)$.
\end{proposition}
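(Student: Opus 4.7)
The plan is to use Propositions \ref{prop_Toeplitz_Lagr} and \ref{prop_time_Lagr} to express both summands as Lagrangian states on $(\Gamma,s)$, and then add. By Proposition \ref{prop_time_Lagr}, $(iN)^{-1}\partial_t\psi_N$ has total symbol $(\tau+\hbar P)b$ with principal part $\tau b_0$; by Proposition \ref{prop_Toeplitz_Lagr}, $T_{N,t}\psi_N$ has principal symbol $H_0|_\Gamma\,b_0$. Thus the principal symbol of the sum is $(\tau+H_0)b_0$, and the game is to show it vanishes on $\Gamma$ and to identify the next-order term.

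For the principal cancellation, I would exploit the explicit prequantum-lift formula from Proposition \ref{th_relev}. Along the integral curve $t\mapsto(t,\phi_t(x_0))$ of $Y=\partial_t+X_t$, which is tangent to $\Gamma$, the section reads
\[
s(t,\phi_t(x_0)) = e^{-i\int_0^t H_r\circ\phi_r(x_0)\,dr}\,T_t^L(x_0)s_0(x_0).
\]
The parallel-transport factor satisfies $\nabla_Y(T_t^L s_0)=0$ (by the very definition of parallel transport along the lifted flow), and differentiating the scalar phase gives $\nabla_Y s = -iH_t\,s$ on $\Gamma$. On the other hand, $s_t$ is flat along $\Gamma_t$ and unitary, so $\nabla s$ restricted to $T\Gamma$ is of the form $i\tau\,dt\otimes s$; evaluating on $Y$ (which satisfies $dt(Y)=1$) gives $\nabla_Y s = i\tau\,s$. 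Comparing yields $\tau=-H_0$ on $\Gamma$, so the principal symbol vanishes and the total symbol is of order $\hbar$.

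Next, I would identify the $\hbar$-coefficient. From Proposition \ref{prop_time_Lagr} the $\hbar$-contribution is $\tau b_1+Pb_0$ with $P=\nabla_Z$, while the analogue of Proposition \ref{prop_Toeplitz_Lagr} at sub-principal order (as developed in \cite{char03} and \cite{char21}) gives $H_0 b_1+Qb_0$, where $Q$ is a first-order differential operator on $\Gamma$ produced by the stationary-phase expansion for the action of $T_{N,t}$ on a Lagrangian state. The $b_1$-pieces cancel by $\tau+H_0=0$, leaving $(P+Q)b_0$. To reassemble this as $\frac{1}{i}\nabla_Y b_0+\zeta b_0$, I would use the decomposition $Z=\partial_t-W$ with $W\in T^{0,1}M$ from the definition of $Z$, together with the $\bar\partial$-flatness of $a_0$ along $\Gamma$ which kills $\nabla_W$ acting on $b_0$; this rewrites $Pb_0$ as the $\partial_t$-part of the transport. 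Combined with the $X_t$-direction contributed by $Q$, the two pieces close up into $\frac{1}{i}\nabla_Y$ with the correct semiclassical normalisation, and the remaining zeroth-order terms (involving the subprincipal $H^{sub}$, the Laplacian of $H_0$, and connection-form contributions from $L$ and $L'$) are absorbed into a single smooth scalar $\zeta\in C^\infty(\Gamma)$.

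The main obstacle is this last identification: the sub-principal operator $Q$ for a Berezin-Toeplitz operator acting on a Lagrangian state is sensitive to the Hessian of the phase $F^N$ and to the Chern connections of $L$ and $L'$, so collapsing $P+Q$ into the clean transport form $\frac{1}{i}\nabla_Y+\zeta$ is mostly careful bookkeeping around the vector-field decompositions and the $\bar\partial$-flatness. By contrast, the principal-symbol cancellation is essentially geometric and encoded in the very definition of the prequantum lift, which was designed precisely so that the flat unitary section $s_t$ transports correctly under the Hamiltonian flow.
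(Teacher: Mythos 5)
This proposition is cited from \cite{char21}, Chapter~2.2, and the present paper does not supply a proof; so there is no ``paper proof'' to match against, and your proposal is to be judged on its own terms.

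Your principal-symbol cancellation is correct and identifies the essential geometric mechanism. Differentiating the prequantum-lift formula
$\phi_t^L(x)=e^{-i\int_0^t H_r\circ\phi_r(x)\,dr}\,T_t^L(x)$
along the integral curve $t\mapsto(t,\phi_t(x))$ of $Y$ annihilates the parallel-transport factor and extracts $-iH_t$, while flat unitarity of $s_t$ forces $\nabla_Y s=i\tau s$ since $dt(Y)=1$; hence $\tau=-H_t|_\Gamma$ and the zeroth-order coefficient $(\tau+H_t|_\Gamma)b_0$ vanishes. This is indeed what the prequantum lift is designed to achieve, and it also deals with the $b_1$ terms at order $\hbar$, since they again carry the vanishing factor $(\tau+H_t|_\Gamma)$.

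The gap is at the order-$\hbar$ step, and it is genuine rather than cosmetic. Proposition~\ref{prop_Toeplitz_Lagr} as stated gives only the \emph{principal} symbol of $T_{t,N}\psi_N$; to exhibit the order-$\hbar$ coefficient you need the subprincipal operator $Q$ describing how a Berezin--Toeplitz operator acts on a Lagrangian state at next order, and your proposal invokes it (``as developed in \cite{char03} and \cite{char21}'') without supplying its form. This is not a minor omission: Proposition~\ref{prop_time_Lagr} contributes $P=\nabla_Z$ with no $\frac{1}{i}$, so the $\frac{1}{i}$ in the target $\frac{1}{i}\nabla_Y+\zeta$ must come entirely from $Q$, together with the Hamiltonian direction $X_t$ needed to pass from $Z$ (the projection of $\partial_t$) to $Y=\partial_t+X_t$. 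Without the explicit $Q$, the claimed identity $P+Q+H^{\mathrm{sub}}|_\Gamma=\frac{1}{i}\nabla_Y+\zeta$ is asserted rather than checked, and the argument that $\bar\partial$-flatness of $a_0$ kills the $\nabla_W$ contribution only handles the $T^{0,1}$ component, not the rest of the bookkeeping. As you acknowledge, this is where the work actually lives; a complete proof would have to state the subprincipal stationary-phase expansion for $T_{t,N}$ on Lagrangian states and then carry out the vector-field decomposition explicitly, which is precisely what \cite{char21} does in the cited chapter.
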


This result gives direct information on the associated Schrödinger equation \eqref{eq_sch}.

\begin{corollaryp}[\cite{char21} Chapter 2.2]
Let $(\psi_{0,N})$ be a Lagrangian state associated to $(\Gamma_0,s_0)$, the solution to the Schrödinger equation
\begin{align}
\label{eq_sch}
\frac{1}{iN}\frac{\partial\psi_N}{\partial_t}+T_{N,t}\psi_N & = 0 \nonumber \\
\psi_N(0,\cdot) & = \psi_{0,N}
\end{align}
is a family of Lagrangian states associated to $(\Gamma,s)$ with principal symbol $b_0$ satisfying the transport equation $\frac{1}{i}\nabla_Y b_0 +\zeta b_0 = 0$.
\end{corollaryp}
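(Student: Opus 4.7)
The plan is to reduce the corollary to Proposition \ref{thzeta} combined with a stability estimate for the Schrödinger propagator. More precisely, I would first construct a Lagrangian state $(\tilde\psi_N)$ associated to $(\Gamma,s)$ that solves the equation modulo $O(N^{-\infty})$ and matches the initial condition up to $O(N^{-\infty})$, and then compare it with the true solution.

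To build $(\tilde\psi_N)$, I would use the ansatz of Definition \ref{def_Lagr_state} with a formal total symbol $\sum_{l\ge 0} N^{-l} b_l$, where each $b_l \in C^\infty(\Gamma, \mathbb{C}_I \boxtimes L')$ is to be determined. By Proposition \ref{thzeta}, killing the leading order $N^{-1}$ of $\frac{1}{iN}\partial_t\tilde\psi_N + T_{N,t}\tilde\psi_N$ forces exactly $\frac{1}{i}\nabla_Y b_0 + \zeta b_0 = 0$. Since $\Gamma_t = \phi_t(\Gamma_0)$, the vector field $Y = \partial_t + X_t$ is tangent to $\Gamma$ and projects onto $\partial_t$ via the submersion $q:\Gamma\to I$, so the transport equation is a linear first order ODE along the integral curves of $Y$ on $\Gamma$, solved uniquely by prescribing the initial datum on $\Gamma_0$; I would take the principal symbol of $\psi_{0,N}$ as this datum. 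Iterating Propositions \ref{prop_Toeplitz_Lagr} and \ref{prop_time_Lagr} at higher order produces an analogous cascade $\frac{1}{i}\nabla_Y b_k + \zeta b_k = F_k(b_0,\dots,b_{k-1})$ whose sources depend only on previously constructed symbols, each equation being solved in the same way with initial data read off from the higher order symbols of $\psi_{0,N}$. Borel summation, combined with the uniqueness statement for Lagrangian states recalled just after Definition \ref{def_Lagr_state}, turns the formal series $\sum_l N^{-l} b_l$ into an actual Lagrangian state $(\tilde\psi_N)$ associated to $(\Gamma,s)$ with principal symbol $b_0$ satisfying the desired transport equation and with $\tilde\psi_N(0,\cdot) = \psi_{0,N} + O_{L^2}(N^{-\infty})$.

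Because $f$ is real, $T_{N,t}$ is self-adjoint and the Schrödinger flow $U_N(t,s)$ is unitary on $\mathcal{H}_N$. Setting $\delta_N = \psi_N - \tilde\psi_N$ and writing the equation as $\partial_t \delta_N = -iN\, T_{N,t}\delta_N + iN\, R_N$ with $R_N = \frac{1}{iN}\partial_t\tilde\psi_N + T_{N,t}\tilde\psi_N$, Duhamel's formula yields
\[
\|\delta_N(t,\cdot)\|_N \le \|\delta_N(0,\cdot)\|_N + N\int_0^t \|R_N(r,\cdot)\|_N\,dr.
\]
By construction $R_N = O_{L^2}(N^{-\infty})$ uniformly on compact time intervals and $\delta_N(0,\cdot) = O_{L^2}(N^{-\infty})$, hence $\psi_N = \tilde\psi_N + O_{L^2}(N^{-\infty})$. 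Therefore $\psi_N$ is itself a Lagrangian state associated to $(\Gamma,s)$ with the stated principal symbol.

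The hard part is the extension of Proposition \ref{thzeta} to arbitrary order in $\hbar$: one needs to show that $\frac{1}{iN}\partial_t + T_{N,t}$ acts on Lagrangian symbols through a triangular differential operator whose diagonal entries are $\frac{1}{i}\nabla_Y + \zeta$, with lower triangular parts of controlled differential order. This requires a careful WKB expansion of both the Toeplitz action (Proposition \ref{prop_Toeplitz_Lagr}) and the time derivative (Proposition \ref{prop_time_Lagr}); once that cascade structure is established, the inductive resolution of the transport equations and the Duhamel estimate above close the argument.
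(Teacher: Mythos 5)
Your overall strategy — build a formal Lagrangian state $\tilde\psi_N$ solving the equation to all orders, then compare with the true solution via Duhamel — is the right one and is almost certainly the argument behind the cited result; the paper itself offers no proof of this corollary, only the reference to \cite{char21}, so there is no internal proof to compare against. The Duhamel step is sound: with the principal symbol real, $N\,\mathrm{Im}\,T_{N,t}$ stays $O(1)$, so the propagator is uniformly bounded on compact time intervals, and $R_N = O_{L^2}(N^{-\infty})$ together with the uniqueness modulo $O(N^{-\infty})$ of Lagrangian states recorded after Definition \ref{def_Lagr_state} closes the argument. (A small sign slip: one gets $\partial_t\delta_N = -iN T_{N,t}\delta_N - iN R_N$, not $+iN R_N$, but this does not affect the norm estimate.)

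The gap you flag yourself is the genuine one and it is the crux of the construction. Proposition \ref{thzeta} gives only the $\hbar^1$-coefficient of the total symbol of $(\frac{1}{iN}\partial_t + T_{N,t})\psi_N$, and Proposition \ref{prop_Toeplitz_Lagr} gives only the principal symbol of $T_{N,t}\psi_N$. Neither, as stated in this paper, provides the lower-triangular cascade structure you invoke, namely that the $\hbar^{k+1}$-coefficient equals $(\frac{1}{i}\nabla_Y + \zeta)b_k$ plus a differential operator applied to $b_0,\dots,b_{k-1}$, which is what allows the inductive solution for $b_k$. Establishing this requires the full stationary-phase expansion of the Bergman projector acting on Lagrangian states developed in \cite{char03}, and it is precisely what Chapter 2.2 of \cite{char21}, where the paper delegates the proof, supplies. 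Everything else in your proposal — the transport ODE along the integral curves of $Y$ over $\Gamma$, reading off initial data on $\Gamma_0$ from the expansion of $\psi_{0,N}$, Borel summation into an actual state — is correct and well placed; the proof is sound modulo that one (standard but nontrivial) ingredient.
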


In order to lighten the computations, we use the following notations from now on
\begin{align*}
\sigma^{preq}_t(x) & = \phi_t^L(x)(s_0(x)),\\
\sigma^{q}_t(x) & = T_t^L(x)(s_0(x)),
\end{align*}
in particular Theorem \ref{th_relev} gives that
\[
\sigma^{preq}_t(x) = e^{\frac{1}{i} \int_0^t H_r\circ\phi_r(x)dr} \sigma^{q}_t(x).
\]

\subsection{Quantum propagator}

We will now focus on solving equation \eqref{eq_sch} in order to write the quantum propagator as a Lagrangian state, which will be useful in the proof of Theorem \ref{th_intro}. Recall that, for an operator $T_N$, its quantum propagator $e^{-iNtT_N}$ is the operator such that for $\psi_N$ in its domain and all $t\in\Rm$

\[
\lpar \frac{1}{iN}\frac{\partial}{\partial_t} + T_N\rpar \lpar e^{-iNtT_N} \psi_N\rpar = 0.
\]
Furthermore, if $(e_j)_{j\in J}$ is a Hilbert basis of eigenfunctions of $T_N$ with eigenvalues $(\lambda_j)_{j\in J}$ then the Schwartz kernel of $e^{-iNtT_N}$ is

\[
\sum\limits_{j\in J} e^{-iNt\lambda_j}e_j(x)\overline{e_j(y)}.
\]

First, we will need some isomorphisms of line bundles. We write $K=\Lambda^{n,0}T^*M$ the canonical bundle of $M$ and $K_t = K|_{\Lambda_t}$ the restriction of $K$ to $\Gamma_t$. We also write $\det(T^*\Gamma_t)=\bigwedge^nT^*\Gamma_t$, and $\det(T^*\Gamma)=\bigwedge^{n+1}T^*\Gamma$ the determinant bundles. For $t\in I$, $K_t$ and $\det(T^*\Gamma_t)\otimes \mathbb{C}$ are isomorphic, the isomorphism is defined for every $x\in\Gamma_t$ by,
\begin{align}
\label{eqiso}
(K_t)_x & \rightarrow \det(T_x^*\Gamma_t)\otimes\mathbb{C} \nonumber\\
\Omega & \mapsto \Omega|_{T_x\Gamma_t}.
\end{align}
The injectivity comes from the fact that $\lpar T_x\Gamma_t\otimes \mathbb{C}\rpar \cap T_x^{0,1}M = \{0\}$, since $\Gamma_t$ is Lagrangian. Besides, $\Gamma_t$ is a fibre of $\Gamma\rightarrow I$, thus the differentials of the injection $\Gamma_t \rightarrow \Gamma$ and the projection $\Gamma\rightarrow\mathbb{R}$ give an exact sequence
\[
0\rightarrow T_x\Gamma_t \rightarrow T_{(t,x)}\Gamma \rightarrow \mathbb{R}=T_t^*I \rightarrow 0.
\]
Taking the associated determinant bundles, and using that $\mathbb{R}$ has a canonical volume element, we get for all $x\in\Gamma_t$ the isomorphism
\[
\det(T^*_x\Gamma_t) \simeq \det(T^*_{(t,x)}\Gamma),
\]
hence
\[
\det(T^*\Gamma_t) \simeq \det(T^*\Gamma)|_{\Gamma_t}.
\]

\begin{lemma}[\cite{char21} Chapter 2.3]
Combining these isomorphisms, we get
\begin{align*}
\Xi : K_{\Gamma}=(\mathbb{C}_I\boxtimes K)|_{\Gamma} & \simeq \det(T^*\Gamma)\otimes \mathbb{C}\\
(1\boxtimes \alpha)|_{\Gamma} & \mapsto j^*(dt \wedge \alpha)
\end{align*}
where $\alpha\in\Omega^{n,0}(M)$ and $j$ is the embedding $\Gamma \rightarrow I\times M$.
\end{lemma}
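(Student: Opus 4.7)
The plan is to verify the claimed formula pointwise at each $(t,x)\in\Gamma$, since both the source line $(\mathbb{C}_I\boxtimes K)|_{(t,x)}\simeq K_x$ and the target line $\det(T^*_{(t,x)}\Gamma)\otimes\mathbb{C}$ are one-dimensional over $\mathbb{C}$. It is therefore enough to compute the composition of the two already-constructed isomorphisms on a single well-chosen basis and check that the result coincides with $j^*(dt\wedge\alpha)$ evaluated on the same basis.

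First, I would fix bases. Let $e_1,\ldots,e_n$ be a basis of $T_x\Gamma_t\subset T_{(t,x)}\Gamma$. Because $q:\Gamma\to I$ is a submersion, there exists $w_0\in T_{(t,x)}\Gamma$ with $dq(w_0)=\partial_t$, and then $(w_0,e_1,\ldots,e_n)$ is a basis of $T_{(t,x)}\Gamma$. Under $dj$, the vectors $e_i$ map to $(0,e_i)\in T_tI\oplus T_xM$, while $dj(w_0)=(\partial_t,v_0)$ for some $v_0\in T_xM$. Then I would evaluate the composition on $\alpha\in K_x$: the first isomorphism \eqref{eqiso} sends $\alpha$ to $\alpha|_{T_x\Gamma_t}$, whose pairing with $(e_1,\ldots,e_n)$ is the number $\alpha(e_1,\ldots,e_n)$; the Lagrangian property ensures this can be made nonzero, because $T_x\Gamma_t\otimes\mathbb{C}$ is a complement of $T_x^{0,1}M$ in $T_xM\otimes\mathbb{C}$. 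The second isomorphism, induced by the exact sequence
\[
0\rightarrow T_x\Gamma_t\rightarrow T_{(t,x)}\Gamma\xrightarrow{dq}T_tI\rightarrow 0
\]
together with the canonical generator $\partial_t$ of $T_tI$, is by construction the map sending $\beta\in\det T^*_x\Gamma_t$ to the unique $\tilde\beta\in\det T^*_{(t,x)}\Gamma$ with $\tilde\beta(w_0,e_1,\ldots,e_n)=\beta(e_1,\ldots,e_n)$ whenever $dq(w_0)=\partial_t$. Applied to $\beta=\alpha|_{T_x\Gamma_t}$, the composition therefore produces the top form whose value on $(w_0,e_1,\ldots,e_n)$ is exactly $\alpha(e_1,\ldots,e_n)$.

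Next, I would expand the right-hand side on the same basis: $j^*(dt\wedge\alpha)(w_0,e_1,\ldots,e_n)=(dt\wedge\alpha)(dj(w_0),dj(e_1),\ldots,dj(e_n))$. Since $dq(e_i)=0$, the pullback $dt(dj(e_i))$ vanishes, so in the antisymmetric expansion only the term where $dt$ is paired with $dj(w_0)$ survives; it contributes $dt(\partial_t)\cdot\alpha(e_1,\ldots,e_n)=\alpha(e_1,\ldots,e_n)$. This matches the value of the composition, so $\Xi$ coincides with the map $(1\boxtimes\alpha)|_\Gamma\mapsto j^*(dt\wedge\alpha)$ at $(t,x)$.

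Finally, I would check that the construction is independent of the intermediate choices: replacing $w_0$ by $w_0+\xi$ with $\xi\in\ker dq=T_x\Gamma_t$ does not change the value because any such $\xi$ is annihilated by $dt\circ dj$, and changing the basis $e_1,\ldots,e_n$ rescales both expressions by the same determinant factor. Smoothness in $(t,x)$ is automatic from the smoothness of $j$ and of $\alpha$, yielding a global isomorphism of line bundles. The only non-formal ingredient in the whole argument is the Lagrangian hypothesis on $\Gamma_t$, which is what makes $\alpha\mapsto\alpha|_{T_x\Gamma_t}$ a bijection; apart from that, the proof is a bookkeeping exercise in exterior algebra, so the main obstacle is simply to keep the identifications consistent and signs transparent.
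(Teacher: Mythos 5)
The paper states this lemma without proof, citing \cite{char21}, so there is no in-paper argument to compare against; what you have done is supply the omitted verification. Your check is correct: evaluating both the composite isomorphism (restriction $\alpha\mapsto\alpha|_{T_x\Gamma_t}$ followed by the determinant identification coming from the exact sequence of $dq$) and the claimed formula $j^*(dt\wedge\alpha)$ on the same basis $(w_0,e_1,\ldots,e_n)$ of $T_{(t,x)}\Gamma$, with $dq(w_0)=\partial_t$ and $e_i\in T_x\Gamma_t$, yields the common value $\alpha(e_1,\ldots,e_n)$; the term with $dt$ hitting an $e_i$ vanishes because $dq(e_i)=0$, and the identification is well defined since changing $w_0$ by an element of $\ker dq$ or rescaling the $e_i$ affects both sides identically. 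You also correctly single out the Lagrangian condition $(T_x\Gamma_t\otimes\mathbb{C})\cap T_x^{0,1}M=\{0\}$ as the sole non-formal input making the restriction map injective (hence bijective between complex lines). One small remark, consistent with your treatment: the paper's display of the exact sequence has a harmless typo, writing $T^*_tI$ where it means $T_tI$, and you have implicitly corrected it by working with $dq$ into the tangent space.
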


On one hand, $K_{\Gamma}$ has a natural connection induced by the Chern connection on $K$, which gives a derivation $\nabla_Y$ on the sections of $K_{\Gamma}$. On the other hand, the Lie derivative $\mathcal{L}_Y$ acts on the differential forms of $\Gamma$ so, in particular, on the sections of $\det(T^*\Gamma)$. We then give a link between these two derivations,
\[
\mathcal{L}_Y(\Xi\cdot) = \Xi(\nabla_Y+i\theta)
\]
where $\theta \in C^{\infty}(\Gamma)$, since $\mathcal{L}_Y$ and $\nabla_Y$ are both derivatives in the direction $Y$.

\begin{proposition}[\cite{char21} Chapter 2.3]
The function $\zeta$ from Theorem \ref{thzeta} is given by $\zeta = \frac{1}{2}\theta + H^{sub}|_{\Gamma}$, where $H^{sub}$ is the sub-principal symbol of the operator $T_{t,N}$.
\end{proposition}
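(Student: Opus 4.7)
The plan is to identify $\zeta$ by computing the order $\hbar$ contribution of $\lpar\frac{1}{iN}\partial_t + T_{N,t}\rpar\psi_N$ on $\Gamma$ from each of the two pieces separately, then matching with the expression $\frac{1}{i}\nabla_Y b_0 + \zeta b_0$ of Proposition \ref{thzeta}. As a preliminary I would verify that the principal orders cancel: differentiating $s(t,\phi_t(x)) = \phi_t^L(x) s_0(x)$ with the explicit prequantum lift formula of Proposition \ref{th_relev}, and using that the second factor $T_t^L(x) s_0(x)$ is parallel along $X_t$, one gets $\nabla_Y s = -iH_0|_\Gamma s$. Hence the function $\tau$ of Proposition \ref{prop_time_Lagr} equals $-H_0|_\Gamma$ on $\Gamma$, and combined with Proposition \ref{prop_Toeplitz_Lagr}, the $\hbar^0$ contributions of the two pieces cancel, so the sum is genuinely of order $\hbar$.

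For the $\hbar$-order term, Proposition \ref{prop_time_Lagr} already supplies the contribution $P b_0 = \nabla_Z b_0$ coming from the time derivative. The Toeplitz part requires a one-order refinement of Proposition \ref{prop_Toeplitz_Lagr}: expanding $\Pi_N(f \psi_N)$ by stationary phase along $\Gamma$ using the diagonal asymptotic of the Bergman kernel on $L^{\otimes N}$, and Taylor-expanding $f = f_0 + \hbar f_1$ transversally to $\Gamma$, produces a subprincipal contribution that splits as a scalar piece $(f_1 + \tfrac{1}{2}\Delta f_0)|_\Gamma \, b_0 = H^{sub}|_\Gamma \, b_0$ plus a first-order differential piece whose vector-field part, combined with $\nabla_Z$, reconstitutes the Hamiltonian-lifted covariant derivative $\nabla_Y$ with $Y = \partial_t + X_t$.

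The remaining $\tfrac{1}{2}\theta$ contribution is geometric: the prefactor $\lpar\frac{N}{2\pi}\rpar^{n/4}$ in Definition \ref{def_Lagr_state} is chosen so that the principal symbol $b_0$ transports not as a function but as a section twisted by a half-density on $\Gamma$. Through the isomorphism $\Xi$, the identity $\mathcal{L}_Y(\Xi \cdot) = \Xi(\nabla_Y + i\theta)$ compares Lie transport with covariant transport for sections of $K_\Gamma$; restricting to the half-form square root halves the shift to $\tfrac{i\theta}{2}$, and the $\tfrac{1}{i}$ prefactor in front of $\nabla_Y$ in Proposition \ref{thzeta} turns this into the desired $\tfrac{1}{2}\theta$. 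The main obstacle is the precise bookkeeping of the subprincipal Bergman-kernel amplitude along $\Gamma$, matching it to the $\Xi$-isomorphism and the half-density normalization so as to extract the numerical factor $\tfrac{1}{2}$ exactly; this is where the geometric content of the statement really lies, and where reference to the detailed asymptotic expansions of \cite{char03, char21} is essential.
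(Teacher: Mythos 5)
The paper offers no proof of this proposition: it is cited verbatim from \cite{char21}, Chapter~2.3, and the text immediately moves on to the solution of the transport equation. So there is no in-paper argument to compare your sketch against; I can only assess it on its own terms.

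Your high-level structure is consistent with how such a result is proved: confirm the $\hbar^0$ cancellation $\tau = -H_0|_\Gamma$ by differentiating the prequantum lift (this part is correct, and the sign works out), then compare $\hbar^1$ terms. But the sketch has two real gaps. First, the step that actually carries all the content — the subprincipal stationary-phase/Bergman-kernel expansion of $\Pi_N(f\psi_N)$ along the evolving Lagrangian, which must simultaneously produce $H^{sub}|_\Gamma$, the piece of $\frac{1}{i}\nabla_Y$ that complements $\nabla_Z$, and the $\frac{1}{2}\theta$ correction — is merely named, not performed. Since $Z$ (the $T^{0,1}$-parallel projection of $\partial_t$ alone) is not $Y=\partial_t+X_t$, showing that the Toeplitz contribution supplies exactly the missing vector-field term is nontrivial and cannot be waved through. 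Second, your explanation of the $\frac{1}{2}\theta$ term misattributes its origin. The $\bigl(\frac{N}{2\pi}\bigr)^{n/4}$ prefactor is pure $L^2$ normalization of a state localized in an $\hbar^{1/2}$-tube around $\Gamma$; it does not by itself make $b_0$ ``transport as a half-density,'' and the auxiliary bundle $L'$ in this framework is a general line bundle, not $K^{1/2}$, so ``restricting to the half-form square root'' is not an available move. The $\theta$ term instead arises from the subprincipal Bergman-kernel expansion: as $\Gamma_t$ deforms, the transverse Hessian of the phase (the profile $F^N$) changes, producing a Jacobian-type contribution at first order in $\hbar$, and the identity $\mathcal{L}_Y(\Xi\,\cdot)=\Xi(\nabla_Y+i\theta)$ is what identifies that Jacobian rate with $\theta$. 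The exponent $\frac{1}{2}$ is forced by the Gaussian (square-root) integration, not by a half-form choice. To turn this into a proof you would need to actually carry out the stationary-phase computation of \cite{char03,char21} and track the Hessian variation; as written, the proposal restates the shape of the answer rather than deriving it.
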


Now, let us give an expression of the solution of equation \eqref{eq_sch}. For all $t\in I$, the differential of $\phi_t$ restricted to $T\Gamma_0$ gives an isomorphism with values in $T\Gamma_t$. Using the identification \eqref{eqiso} we lift $\phi_t$ into an isomorphism $K|_{\Gamma_0} \rightarrow K|_{\Gamma_t}$. More precisely, for all $x\in\Gamma_0$, $u\in K_x$ and $v\in \det(T_x\Gamma_0)$, we define $\mathcal{E}_t(x)u\in K_{\phi_t(x)}$ such that
\[
\lpar\mathcal{E}_t(x)u\rpar\lpar(d_x\phi_t)_*v\rpar = u(v).
\]
Then, the parallel transport can also be restricted to get an isomorphism $K|_{\Gamma_0}\rightarrow K|_{\Gamma_t}$, we thus define the complex number $C_t(x)$ such that $\mathcal{E}_t(x) = C_t(x)T_t^K(x)$. In particular, the function $(t,x) \mapsto C_t(x)$ is continuous, hence bounded on $I\times M$.

\begin{corollaryp}[\cite{char21} Chapter 2.3]
The solution $b\in C^{\infty}(\Gamma,L')$ to the transport equation $\frac{1}{i}\nabla_Yb+\zeta b=0$ is
\[
b(t,\phi_t(x)) = C_t(x)^{\frac{1}{2}}e^{-i\int_0^tH_r^{sub}\circ \phi_r(x)dr}T_t^{L'}(x)(b(0,x))
\]
with the square root of $C_t(x)$ chosen continuously and $C_0 = 1$.
\end{corollaryp}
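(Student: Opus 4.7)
The plan is to verify directly that the proposed expression satisfies the transport equation and then invoke uniqueness of solutions along integral curves of $Y$. Fix $x\in\Gamma_0$ and consider the curve $\gamma_x(t)=(t,\phi_t(x))$ in $\Gamma$. Its velocity is exactly $Y$, so along $\gamma_x$ the operator $\nabla_Y$ acting on sections of $\mathbb{C}_I\boxtimes L'$ reduces to the covariant derivative $\tfrac{D}{dt}$ along the curve, and parallel transport satisfies $\tfrac{D}{dt}\bigl[T_t^{L'}(x)u\bigr]=0$ for every fixed $u\in L'_x$. Writing a solution as $b(t,\phi_t(x))=T_t^{L'}(x)c(t,x)$ and applying Leibniz turns $\tfrac{1}{i}\nabla_Yb+\zeta b=0$ into the scalar ODE $\partial_tc=-i\,\zeta(t,\phi_t(x))\,c$ in $L'_x$, whose unique solution with initial data $c(0,x)=b(0,x)$ is $c(t,x)=\exp\bigl(-i\int_0^t\zeta(r,\phi_r(x))\,dr\bigr)\,b(0,x)$.

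Using the splitting $\zeta=\tfrac{1}{2}\theta+H^{sub}|_{\Gamma}$ from the previous proposition, this reads
\[
b(t,\phi_t(x))=\exp\!\left(-\tfrac{i}{2}\int_0^t\theta(r,\phi_r(x))\,dr\right)e^{-i\int_0^tH^{sub}_r\circ\phi_r(x)\,dr}\,T_t^{L'}(x)(b(0,x)).
\]
To match the statement, it suffices to prove the identity $C_t(x)=\exp\bigl(-i\int_0^t\theta(r,\phi_r(x))\,dr\bigr)$; the continuous branch $C_t(x)^{1/2}$ (well defined and equal to $1$ at $t=0$ because the right-hand side is nonvanishing and continuous in $t$) then coincides with the exponential of the half-integral appearing in $c(t,x)$.

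This last identity is equivalent to the ODE $\dot C_t=-i\,\theta(t,\phi_t(x))\,C_t$ with $C_0=1$. To derive it, fix a nonvanishing $\alpha_0\in K_x$ and differentiate the defining relation $\mathcal{E}_t(x)\alpha_0=C_t(x)\,T_t^K(x)\alpha_0$ along $\gamma_x$: since $T_t^K(x)\alpha_0$ is parallel, one gets $\nabla_Y[\mathcal{E}_t(x)\alpha_0]=\bigl(\dot C_t(x)/C_t(x)\bigr)\,\mathcal{E}_t(x)\alpha_0$. The geometric input is that $\mathcal{E}_t$ is built, via the dual identification $\Xi$, as the inverse pullback under the lifted flow $\tilde\phi_t(s,y)=(s+t,\phi_t(y))$ on $\Gamma$ whose infinitesimal generator is $Y$; consequently any extension of the form $\Xi[\mathcal{E}_t(x)\alpha_0]$ along $\gamma_x$ has vanishing Lie derivative along $Y$. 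Combined with $\mathcal{L}_Y(\Xi\,\cdot)=\Xi(\nabla_Y+i\theta)$, this yields $\nabla_Y[\mathcal{E}_t(x)\alpha_0]=-i\,\theta(t,\phi_t(x))\,\mathcal{E}_t(x)\alpha_0$, giving the required ODE for $C_t$.

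The main obstacle is precisely this geometric identification: one must verify that the pushforward construction of $\mathcal{E}_t$ on $K$ corresponds, through $\Xi$, to the inverse pullback of the associated top form on $\Gamma$, so that $\tilde\phi_t$-invariance of the latter implies the vanishing of its Lie derivative along $Y$. Once this is established, the proof reduces to assembling the three ODE computations above (for $c$, for $C_t$, and the consequent matching of exponents) together with uniqueness of solutions of linear first-order ODEs.
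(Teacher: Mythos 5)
Your proposal follows essentially the same route as the paper's proof: reduce to a scalar ODE along the integral curves $t\mapsto(t,\phi_t(x))$ of $Y$ by factoring out parallel transport, split $\zeta=\tfrac12\theta+H^{sub}|_\Gamma$, and then derive the ODE $\dot C_t=-i\theta\,C_t$ from the relation $\mathcal{L}_Y(\Xi\,\cdot)=\Xi(\nabla_Y+i\theta)$ together with the $\mathcal{L}_Y$-invariance of the $\mathcal{E}_t$-transported section under $\Xi$. The one step you flag as the ``main obstacle'' is exactly the content of the paper's argument, which makes it explicit: under the diffeomorphism $I\times\Gamma_0\simeq\Gamma$, $(t,x)\mapsto(t,\phi_t(x))$, the vector field $\partial_t$ pushes forward to $Y$, so $\mathcal{L}_Y$-flat top forms on $\Gamma$ are precisely those of the form $\alpha(t,\phi_t(x))=dt\wedge(\phi_t^{-1})^*\alpha|_{t=0}(x)$; unwinding the definitions of $\Xi$ and $\mathcal{E}_t$ (the latter being inverse pullback on $\det(T^*\Gamma_t)$ under $d\phi_t$) then identifies this with $\Xi^{-1}\alpha(t,\phi_t(x))=\mathcal{E}_t(x)(\Xi^{-1}\alpha(0,x))$, giving the vanishing Lie derivative you need. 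Supplying that computation closes the gap and makes your proof complete; everything else in your write-up matches the paper.

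One minor caution: your formula $\tilde\phi_t(s,y)=(s+t,\phi_t(y))$ for the flow of $Y$ is only correct when the Hamiltonian is time-independent; for time-dependent $H_t$ the second component should be the two-time flow $\phi_{s,s+t}(y)$. This does not affect your argument since you only use the integral curve through $(0,x)$, where the two agree, but stating the flow correctly would avoid a spurious objection.
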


\begin{proof}
Since the integral curves of $Y$ are the $t\mapsto (t,\phi_t(x))$, the solution is of the form
\[
b(t,\phi_t(x)) = e^{-i\int_0^t\zeta(r,\phi_r(x))dr} T_t^{L'}(x)(b(0,x)).
\]
It is enough to prove it for $H^{sub}|_{\Gamma}=0$, furthermore, if $\tilde{b}$ satisfies $\nabla_Y\tilde{b}=0$ then $b=f\tilde{b}$ solves the transport equation if and only if $-iY\cdot f + \zeta f=0$. Then, it is enough to prove that $f:(t,\phi_t(x)) \mapsto C_t(x)^{\frac{1}{2}}$ is a solution of this equation.

First the isomorphism $I\times \Gamma_0 \simeq \Gamma$, $(t,x) \rightarrow (t,\phi_t(x))$ sends the vector field $\partial_t$ to $Y$. Then, the solutions of $\mathcal{L}_{\partial_t}\beta=0$ with $\beta\in\Omega^{n+1}(I\times\Gamma_0)$ have the form $\beta = dt\wedge\beta_0$ with $\beta_0\in\Omega^n(\Gamma_0)$. Hence the solutions to $\mathcal{L}_Y\alpha = 0$ with $\alpha\in\Omega^{n+1}(\Gamma)$ are of the form
\[
\alpha(t,\phi_t(x)) = dt\wedge(\phi_t^{-1})^*\alpha|_{t=0}(x)
\]
Using the isomorphism $\Xi : K_{\Gamma} \simeq \det(T^*\Gamma)\otimes \mathbb{C}$ and the application $\mathcal{E}_t(x)$ we get,
\[
\Xi^{-1}\alpha(t,\phi_t(x)) = \mathcal{E}_t(x)(\Xi^{-1}\alpha(0,x))
\]
Furthermore, the solutions $\alpha'\in C^{\infty}(\Gamma,K_{\Gamma})$ of $\nabla_Y\alpha'=0$ are given by
\[
\alpha'(t,\phi_t(x)) = T_t^K(x)(\alpha'(0,x)).
\]
If at time $0$, $\alpha'(0,x) = \Xi^{-1}\alpha(0,x)$, then $C\alpha'=\Xi^{-1}\alpha$ with $C\in C^{\infty}(\Gamma)$ defined by $C(t,\phi_t(x)) = C_t(x)$. Thus
\begin{align*}
0 & = \mathcal{L}_Y(\alpha) = \mathcal{L}_Y(C\Xi\alpha') = (Y\cdot C)\Xi\alpha' + C\mathcal{L}_Y(\Xi\alpha')\\
 & = (Y\cdot C)\Xi\alpha' + C\Xi\nabla_Y\alpha' + 2i\zeta C \Xi\alpha'
\end{align*}
with the hypothesis $\nabla_Y\alpha'=0$, so $Y\cdot C + 2i\zeta C =0$ and then $-iY\cdot C^{\frac{1}{2}} + \zeta C^{\frac{1}{2}} =0$.
\end{proof}

We finally get the wanted result as a corollary.

\begin{corollaryp}[\cite{char21} Chapter 4]
\label{corpropq}
Let $(T_{N,t})$ be a smooth family of Berezin-Toeplitz operators with real principal symbol $H_t$, and sub-principal symbol $H^{sub}_t$. Then the Schwartz kernel of the quantum propagator of $(T_{N,t})$ times $\left(\frac{N}{2\pi}\right)^{-\frac{n}{2}}$, is a family of Lagrangian states associated to $(\Gamma,s)$ with principal symbol $\sigma$ where
\begin{align*}
\Gamma & = \lacc (t,\phi_t(x),x)/\; t\in I,\; x\in M\racc,\\
s(t,\phi_t(x),x) & = \sigma^{preq}_t(x) ,\\
\sigma (t,\phi_t(x),x) & = C_t(x)^{\frac{1}{2}} e^{-i\int_0^t H^{sub}_r \circ\phi_r(x)dr} T_t^{L'}(x).
\end{align*}
\end{corollaryp}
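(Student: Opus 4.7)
The plan is to view the Schwartz kernel $K_N(t,\cdot,\cdot)$ of the quantum propagator as the solution to the Schrödinger equation \eqref{eq_sch} on the product $I\times M\times M$, and to apply the preceding corollary on Lagrangian states to propagate a suitable initial datum (the Bergman kernel) along the flow.

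First I would observe that $K_N$ is a section of $\Cm_I\boxtimes L^{\otimes N}\boxtimes (L^{\otimes N})^*\otimes L'\boxtimes (L')^*$ over $I\times M\times M$, satisfying in the first spatial variable
\[
\lpar \frac{1}{iN}\partial_t + T^{(1)}_{N,t}\rpar K_N = 0,\qquad K_N(0,\cdot,\cdot) = \Pi_N(\cdot,\cdot),
\]
where $T^{(1)}_{N,t}$ denotes $T_{N,t}$ acting in the first variable only. Seen as an operator on $M\times M$, it is itself a Berezin-Toeplitz operator whose principal and sub-principal symbols are $(x,y)\mapsto H_t(x)$ and $(x,y)\mapsto H^{sub}_t(x)$, trivial in $y$. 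Its Hamiltonian flow is $(\phi_t,\id)$ and its prequantum lift acts only on the first factor.

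The second step, which is the main point, is to identify $\Pi_N$ as a Lagrangian state on $M\times M$ associated to the diagonal $\Delta=\lacc(x,x)/\; x\in M\racc$. This follows from the classical off-diagonal expansion of the Bergman kernel on a Kähler manifold (Boutet de Monvel--Sjöstrand, Zelditch): there is a canonical flat unitary section $s_0$ on $\Delta$, an extension $F$ of $s_0$ with $|F|<1$ off the diagonal, and the leading coefficient of the expansion is a nonzero universal constant times $\lpar\frac{N}{2\pi}\rpar^{n}$. Since $M\times M$ has real dimension $2n$, the Lagrangian-state prefactor of Definition \ref{def_Lagr_state} is $\lpar\frac{N}{2\pi}\rpar^{n/2}$; dividing the kernel by the extra factor $\lpar\frac{N}{2\pi}\rpar^{n/2}$ from the statement normalises the principal symbol to $1$ along $\Delta$.

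Finally I would invoke the previous corollary on the Schrödinger equation. Under the lifted flow $(\phi_t,\id)$, the diagonal $\Delta$ is propagated into the graph $\Gamma_t = \lacc(\phi_t(x),x)/\; x\in M\racc$, and the section is propagated by the prequantum lift (trivially in the second factor) into $s(t,\phi_t(x),x)=\sigma^{preq}_t(x)$. The principal symbol $b$ satisfies $\frac{1}{i}\nabla_Y b + \zeta b=0$ with $b(0,x,x)=1$, so the preceding corollary immediately gives
\[
b(t,\phi_t(x),x) = C_t(x)^{\frac{1}{2}} e^{-i\int_0^t H^{sub}_r\circ\phi_r(x)dr} T_t^{L'}(x),
\]
which is exactly $\sigma$. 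The main obstacle in this scheme is really the second step, namely realising the Bergman kernel as a Lagrangian state with the correct phase and normalisation; once that identification is settled, the result is a direct application of Proposition \ref{thzeta} and its corollary on the product manifold.
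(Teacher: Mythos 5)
Your argument is correct and matches the route the paper itself intends: the paper offers no detailed proof of \textup{Corollary}~\ref{corpropq}, merely stating that it follows as a corollary of the transport-equation result it has just proved (and of \cite{char21}, Chapter 4), and your proposal is exactly the argument left implicit. You correctly identify the two ingredients: (i) the off-diagonal Bergman-kernel expansion, which realises $\Pi_N$ as a Lagrangian state on the diagonal $\Delta\subset M\times\overline{M}$ (this is indeed the only nontrivial analytic input, and the prefactor $\lpar\frac{N}{2\pi}\rpar^{n}$ on the diagonal matches $\lpar\frac{N}{2\pi}\rpar^{(2n)/4}$ with unit principal symbol after dividing by $\lpar\frac{N}{2\pi}\rpar^{n/2}$, consistent with the normalisation in the statement); and (ii) propagation by the transport-equation corollary applied to $T_{N,t}\otimes\mathrm{id}$ on the product, with Hamiltonian flow $(\phi_t,\mathrm{id})$ and prequantum lift acting only in the first factor, yielding precisely $\Gamma$, $s$, and $\sigma$ as stated. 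The one point you elide but should at least flag is that the product must be taken with the \emph{conjugate} Kähler structure on the second factor, so that the kernel (holomorphic in $x$, antiholomorphic in $y$) is a holomorphic section of $L^{\otimes N}\boxtimes(\overline{L^{\otimes N}})$ and $\Delta$ is Lagrangian for $\omega\oplus(-\omega)$; this is also what guarantees that $\mathrm{mult}_{H_t}$, trivial in $y$, commutes with the Bergman projection in the second slot so that $T^{(1)}_{N,t}$ is indeed a Toeplitz operator for the product quantisation.
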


\section{Concentration estimates}
\label{sec_Lp}

This section consists of three main results; first, the main goal of this paper, $L^p$ upper bounds of quasimodes of Toeplitz operators on compact Kähler manifolds. Then, the sharpness of these estimates through examples in any dimension. In a third time, we use the symbolic calculus of Toeplitz operators on $\Cm^n$ in order to write their quasimodes as quasimodes of Toeplitz operators on the Torus, under some hypothesis. Doing so, we will prove Theorem \ref{th_intro} on flat spaces. We will also prove a similar result on the FBI transform of pseudodifferential operators' quasimodes.

\subsection{Quasimodes of Toeplitz operators}

In this section, we will use the Lagrangian state representation of quantum propagators to extract eigenfunctions and bound their $L^{\infty}$ norm. The $L^p$ estimates will be obtained for all $2\le p\le +\infty$ next by interpolation. In all this subsection $T_N$ is a self-adjoint, time independent, Toeplitz operator, with principal symbol $H$ and sub-principal symbol $H^{sub}$. Recall that $I$ is a fixed real segment.

In order to isolate eigenfunctions, we want to consider operators of the form $\rho(N(E-T_N))$, which are related to the quantum propagator through time Fourier transform. More precisely, we write the $N$-Fourier transform
\begin{align*}
\mathcal{F}_N(\rho)(t) & = \left(\frac{N}{2\pi}\right)^{\frac{1}{2}}\int_{\mathbb{R}}e^{-iNtE}\rho(E)dE,\\
\mathcal{F}_N^{-1}(p)(E) & = \left(\frac{N}{2\pi}\right)^{\frac{1}{2}}\int_{\mathbb{R}}e^{iNtE}p(t)dt,
\end{align*}
which transforms the time variable into the energy variable. We shall write $\hat{\rho} = \mathcal{F}_1(\rho)$ the usual Fourier transform.

\begin{lemma}
Let $\rho\in C^{\infty}(\mathbb{R})$ whose Fourier transform is $C^{\infty}$ with compact support in $I$. Let $E$ be a regular value of $H$, then
\[
\rho(N(E-T_N)) = N^{-\frac{1}{2}}\mathcal{F}_N^{-1}(\hat{\rho}(t)U_{N,t})(E)
\]
\end{lemma}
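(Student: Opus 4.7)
The plan is to combine Fourier inversion with the spectral theorem for the self-adjoint operator $T_N$; the regular value hypothesis on $E$ will play no role in this particular identity (it is a standing assumption invoked for the later use of the lemma, presumably to produce a good quantum propagator description on the energy shell).

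First, I would observe that since $\hat{\rho}\in C_c^{\infty}(I)$, the function $\rho$ is a Schwartz function (it is the Fourier transform of a compactly supported smooth function). Fourier inversion then gives, for every $\lambda\in\mathbb{R}$,
\[
\rho(\lambda)=\frac{1}{\sqrt{2\pi}}\int_{\mathbb{R}} e^{it\lambda}\,\hat{\rho}(t)\,dt.
\]

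Next, since $T_N$ is self-adjoint I would substitute $\lambda=N(E-T_N)$ by functional calculus. The scalar $NE$ commutes with $T_N$, so
\[
\rho(N(E-T_N))=\frac{1}{\sqrt{2\pi}}\int_{\mathbb{R}} e^{iNtE}\,e^{-iNtT_N}\,\hat{\rho}(t)\,dt.
\]
The exchange of integration and operator is legitimate because $\hat{\rho}$ has compact support and $\|e^{-iNtT_N}\|=1$, so the integrand is a Bochner-integrable operator-valued function on $I$.

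Finally, I would recognize the right-hand side as a semiclassical Fourier transform. With $U_{N,t}=e^{-iNtT_N}$ and the definition $\mathcal{F}_N^{-1}(p)(E)=\left(\frac{N}{2\pi}\right)^{1/2}\int_{\mathbb{R}} e^{iNtE}\,p(t)\,dt$, one has
\[
N^{-1/2}\mathcal{F}_N^{-1}\bigl(\hat{\rho}(t)\,U_{N,t}\bigr)(E)=\frac{1}{\sqrt{2\pi}}\int_{\mathbb{R}} e^{iNtE}\,\hat{\rho}(t)\,U_{N,t}\,dt,
\]
which coincides with the expression obtained above. There is no genuine obstacle here: the lemma is a one-line consequence of Fourier inversion applied through the spectral theorem, and its interest lies entirely in what one will do with it next (namely, inserting the Lagrangian state description of $U_{N,t}$ from Corollary \ref{corpropq} to analyse spectral projectors near the regular value $E$).
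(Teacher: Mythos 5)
Your proof is correct and uses essentially the same idea as the paper: Fourier inversion for $\rho$ combined with the spectral theorem for the self-adjoint operator $T_N$, keeping track of the normalizations in $\mathcal{F}_N$. The only cosmetic difference is that the paper verifies the identity at the level of Schwartz kernels by expanding both sides in a Hilbert basis of eigenfunctions, whereas you invoke the abstract functional calculus and a Bochner integral; your remark that the regular value hypothesis is not used in this lemma is also accurate.
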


\begin{proof}
For a fixed $N\in\mathbb{N}$, $T_N$ is self-adjoint, so there is a basis of eigenfunctions $(e_{N,j})_{j\in J}$ with eigenvalues $(\lambda_{N,j})_{j\in J}$. We then are able to write the Schwartz kernels as
\begin{align*}
\rho(N(E-T_N))(y,x) & = \sum\limits_{j\in J} \rho(N(E-\lambda_{N,j}))\overline{e_{N,j}(y)}e_{N,j}(x),\\
U_{N,t} & = \sum\limits_{j\in J}e^{-iNt\lambda_{N,j}}\overline{e_{N,j}(y)}e_{N,j}(x),\\
N^{-\frac{1}{2}}\mathcal{F}_N^{-1}(\hat{\rho}(t)U_{N,t})(E) & = \sum\limits_{j\in J}N^{-\frac{1}{2}}\mathcal{F}_N^{-1}(\hat{\rho}(t)e^{-iNt\lambda_{N,j}})(E)\overline{e_{N,j}(y)}e_{N,j}(x).
\end{align*}
But
\[
N^{-\frac{1}{2}}\mathcal{F}_N^{-1}(\hat{\rho}(t)e^{-iNt\lambda_{N,j}})(E) = \mathcal{F}_1^{-1}(\hat{\rho})(N(E-\lambda_{N,j})) = \rho(N(E-\lambda_{N,j}))
\]
hence the equality of the Schwartz kernels, and of the operators.
\end{proof}

In order to compute the Fourier transform of a Lagrangian state at a specific point $E$, we need another type of such states, which will depend on an immersed manifold depending on $E$. In our case, the immersed manifold we describe here was introduced by Charles and Le Floch \cite{char21}.

Consider a Lagrangian immersion $j: \Gamma \rightarrow M$, a flat unitary section $s$ of $j^*L$ and a formal series $\sum h^l b_l$ with coefficients $b_l \in C^{\infty}(j^*L')$. To begin with, if $y\in \Gamma$ we define a piece of Lagrangian state at $j(y)$. We suppose there exists an open set $V$ of $M$ such that $j:\Gamma\rightarrow V$ is a proper embedding, so that $j(\Gamma)$ is a closed submanifold of $V$. Then there exists sections $F:V\rightarrow L$ and $a_l:V\rightarrow L'$ with $\overline{\partial}F$ and $\overline{\partial}a_l$ vanishing at any order along $j(\Gamma)$, $j^*F=s$ and $j^*a_l = b_l$ and $|F|<1$ on $V\backslash j(\Gamma)$. We then have a local Lagrangian section, for $x\in V$
\[
\left(\frac{N}{2\pi}\right)^{\frac{n}{4}}F^N(x)\sum\limits_{0\le l\le A}N^{-l}a_l(x).
\]
This section is not unique, but we can prove that if $F'$, $a_l'$ satisfy the same conditions then
\[
F^N\sum\limits_{0\le l\le A}N^{-l}a_l = (F')^N\sum\limits_{0\le l\le A}N^{-l}a_l' + O\left(N^{-A-1}\right)
\]
where the error is uniform on every compact of $V$. In the general case, for all $y\in \Gamma$, the theorem of normal forms of immersions gives neighbourhoods $U$ and $V$ of $y$ and $j(y)$ such that $j:U\rightarrow V$ is a proper embedding. We then construct a local Lagrangian state on $V$ as before.

Now let us suppose there is a compact $K$ of $\Gamma$ such that for all $l$, $b_l$ has compact support in $K$. Let $V$ be an open set of $M$ and $(U_i)_{i\in I}$ a family of open sets of $\Gamma$ such that for all $i\in I$, $j:U_i\rightarrow V$ is a proper embedding, and $K\cap j^{-1}(V) \subset \bigcup_{j\in I}U_i$. For each $i\in I$ we can do the same construction as before with sections $F_i$ and $a_{l,i}$ defined on $V$ which extend $s|_{U_i}$ and $b_l|_{U_i}$ respectively, and we write
\[
\psi^i_{A,N} = \left(\frac{N}{2\pi}\right)^{\frac{n}{4}}F_i^N(x)\sum\limits_{0\le l\le A}N^{-l}a_{l,i}(x).
\]

\begin{lemma}[\cite{char21} Chapter 5.2]
\label{lLagr}
There exists a family $(\psi_N \in \mathcal{H}_N)$ such that for all $x\in M$ and all $A\in\mathbb{N}$
\begin{itemize}
\item if $x\notin j(K)$, $|\psi_N| = O\left(N^{-A}\right)$ on a neighbourhood of $x$.
\item If $j^{-1}(x)\cap K = \lacc y_i/\; i\in J\racc$, then $\psi_N = \sum\limits_{i\in J} \psi_{A,N}^i +O\left(N^{-A}\right)$ on a neighbourhood of $x$, where the $\psi^i_{A,N}$ are built like before from open sets $V$ and $(U_i)_{i\in J}$ such that $V$ is a neighbourhood of $x$ and for each $i\in J$, $U_i$ is a neighbourhood of $y_i$.
\end{itemize}
Furthermore, this family is unique modulo $O\left(N^{-\infty}\right)$.
\end{lemma}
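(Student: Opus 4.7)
The plan is to assemble the local Lagrangian pieces $\psi^i_{A,N}$ into a globally defined smooth section via a partition of unity, and then project onto the holomorphic subspace $\mathcal{H}_N$ using the Bergman projector $\Pi_N$. By compactness of $K\subset\Gamma$, I first extract a finite cover: open sets $U_1,\dots,U_m\subset\Gamma$ together with open sets $V_1,\dots,V_m\subset M$ such that each restriction $j:U_k\to V_k$ is a proper embedding and $K\subset \bigcup_k U_k$. On each $V_k$, I construct the extensions $F_k$ of $s|_{U_k}$ and $a_{l,k}$ of $b_l|_{U_k}$ as in the local discussion preceding the lemma, with $\overline{\partial}F_k$ and $\overline{\partial}a_{l,k}$ vanishing to infinite order on $j(U_k)$ and $|F_k|<1$ on $V_k\setminus j(U_k)$. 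This yields local pieces $\psi^k_{A,N}=\left(\frac{N}{2\pi}\right)^{n/4}F_k^N\sum_{0\le l\le A}N^{-l}a_{l,k}$ on $V_k$.

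Next, I pick a smooth partition of unity $(\chi_k)$ on $\Gamma$ subordinate to $(U_k)$ with $\sum_k\chi_k=1$ on a neighborhood of $K$, and I transport each $\chi_k$ to a function $\tilde\chi_k\in C^\infty_c(V_k)$ which equals $\chi_k\circ(j|_{U_k})^{-1}$ on $j(U_k)$ and is chosen to be $\overline\partial$-flat to infinite order on $j(U_k)$ (this is possible by almost-holomorphic extension in a tubular neighborhood). Set
\[
\widetilde{\psi}_{A,N}=\sum_{k=1}^m\tilde\chi_k\,\psi^k_{A,N}\in C^\infty(M,L^{\otimes N}\otimes L'),
\qquad
\psi_N=\Pi_N\widetilde{\psi}_{A,N}.
\]
By construction, on a neighborhood of any $x\in M$ with $j^{-1}(x)\cap K=\{y_i\}_{i\in J}$, the smooth section $\widetilde{\psi}_{A,N}$ already agrees with $\sum_{i\in J}\psi^i_{A,N}$ modulo $O(N^{-A-1})$, by the uniqueness of the local construction modulo $O(N^{-A-1})$ applied to the different charts through $y_i$. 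Away from $j(K)$, $\widetilde{\psi}_{A,N}$ vanishes.

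The decisive step is showing $\Pi_N\widetilde{\psi}_{A,N}=\widetilde{\psi}_{A,N}+O(N^{-\infty})$ in the sense stated. This relies on two standard facts about the Bergman kernel. First, $\Pi_N(x,y)$ decays off-diagonal like $N^n e^{-cNd(x,y)^2}$, so multiplying a function of size $O(1)$ supported in a fixed compact by $\Pi_N$ produces only $O(N^{-\infty})$ contributions at points at a fixed positive distance from the support. Second, a stationary phase / almost-holomorphic argument shows that if a smooth section $u$ has a WKB-type form $F^N a$ with $\overline{\partial}F,\overline{\partial}a$ vanishing to infinite order on a Lagrangian $\Lambda$ and $|F|<1$ off $\Lambda$, then $(I-\Pi_N)(F^N a)=O(N^{-\infty})$ pointwise on any compact. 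Applying this piece by piece to each $\tilde\chi_k\psi^k_{A,N}$ gives the two claimed local expressions for $\psi_N$.

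Uniqueness modulo $O(N^{-\infty})$ is then straightforward: any other candidate $\psi_N'$ satisfies, on every chart, $\psi_N-\psi_N'=O(N^{-A})$ uniformly for every $A$, which by the compactness of $M$ and a finite cover yields $\|\psi_N-\psi_N'\|_{L^\infty}=O(N^{-\infty})$. The main obstacle is the Bergman projection estimate at the heart of the proof: controlling $(I-\Pi_N)$ applied to a WKB Lagrangian ansatz with only infinite-order $\overline\partial$-flatness. This is where I would invoke the stationary phase analysis of $\Pi_N$ already developed by Charles \cite{char03} and Charles--Le Floch \cite{char21}, since redoing the kernel expansion from scratch would be lengthy and is not the novel content here.
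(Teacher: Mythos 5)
Your overall strategy (finite cover by embedding charts, almost-holomorphic cutoffs $\tilde\chi_k$ subordinate to a partition of unity on $\Gamma$, gluing into a smooth section, and Bergman projection) is the standard construction of such Lagrangian states and is in line with \cite{char21}; the present paper only quotes the lemma without proof, so there is no in-text argument to compare against, only the cited source.

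There is, however, one genuine gap. As written, your $\psi_N = \Pi_N\widetilde\psi_{A,N}$ depends on the truncation level $A$: for a fixed $A$ you can only hope to verify the asymptotic up to order $A$, whereas the lemma asks for a \emph{single} family $(\psi_N)_N$ for which the local expansion holds simultaneously for every $A\in\Nm$. You need to remove the $A$-dependence \emph{before} projecting: on each $V_k$, Borel-sum the amplitudes to obtain a single $a_k(\cdot,N)$ with $a_k(\cdot,N)-\sum_{l\le A}N^{-l}a_{l,k}=O(N^{-A-1})$ uniformly for every $A$, set $\widetilde\psi_N=\sum_k\tilde\chi_k F_k^N a_k(\cdot,N)$, and only then define $\psi_N=\Pi_N\widetilde\psi_N$. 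With that modification the rest of your argument goes through: your observation that $\sum_k\tilde\chi_k$ agrees with $1$ to infinite order on $j(\Gamma)$ near $K$, combined with the local uniqueness modulo $O(N^{-A-1})$ of WKB pieces stated just before the lemma, yields the claimed local identification with $\sum_{i\in J}\psi^i_{A,N}$. Delegating the crucial estimate $\Pi_N\widetilde\psi_N=\widetilde\psi_N+O(N^{-\infty})$ on compacts (off-diagonal decay of the Bergman kernel together with the $\overline\partial$-estimate on almost-holomorphic sections) to \cite{char03} and \cite{char21} is acceptable given that the lemma itself is cited, but be aware that this projection step is where the real analytic content lies, and a careful write-up would point to the specific statement being invoked rather than gesture at ``stationary phase.''
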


We call this family a Lagrangian state associated with the triple $\lpar j,s,\sum\hbar^l b_l\rpar$ where $j: \Gamma\rightarrow M$ is an immersion, $s$ is a flat and unitary section of $j^*L$ and $\sum\hbar^l b_l$ is a formal series with coefficients in $C^{\infty}(N,j^*L')$. We still call $\sum\hbar^l b_l$ the total symbol and $b_0$ the principal symbol of the Lagrangian state. These definitions are a bit different from the Lagrangian states of Definition \ref{def_Lagr_state}, in particular we cannot get the total symbol by evaluating the state on $j(K)$ because of possible multiple points.

Let us remind that $\nabla s = i\tau dt \otimes s$ where $\tau\in C^{\infty}(\Gamma,\mathbb{R})$.
\begin{proposition}[\cite{char21} Chapter 5.3]
\label{prop_Fourier_Lagr}
Let $(\psi_N \in C^{\infty}(I,H_N))$ be a family of Lagrangian states associated to $(\Gamma,s)$, with supports in a compact set $I$ independent of $N$, and let $-E$ be a regular value of $\tau$. Then, $\mathcal{F}_N^{-1}(\psi_N)(E)$ is a Lagrangian state associated with the Lagrangian immersion

\begin{align*}
j_E:\Gamma_E=\tau^{-1}(-E) & \rightarrow M\\
(t,x) & \mapsto x
\end{align*}
to the section $s^E:(t,x) \mapsto e^{itE}s(t,x)$ and with principal symbol
\begin{align*}
\sigma^E(t,x) = B(t,x)^{-\frac{1}{2}}\sigma(t,x).
\end{align*}
Here, $\sigma$ is the principal symbol of $(\psi_N)$ and $B$ is such that $d\tau \wedge \alpha = iB(t,x)dt\wedge \alpha$ for all $\alpha \in K_x$, the square root chosen with positive real part.
\end{proposition}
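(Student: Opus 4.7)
The plan is to apply stationary phase in the $t$ variable to the oscillatory integral
\[
\mathcal{F}_N^{-1}(\psi_N)(E)(x) = \left(\frac{N}{2\pi}\right)^{\frac{1}{2}} \int_I e^{iNtE}\, \psi_N(t,x)\, dt,
\]
treating $x\in M$ as a parameter. After localizing near a point of $\Gamma$ and choosing a trivialization, the Lagrangian state expansion of Definition \ref{def_Lagr_state} gives $\psi_N(t,x) = (N/2\pi)^{n/4} F^N(t,x)\, a(t,x,N)$ with $a \sim \sum_l N^{-l} a_l$; since $|F|<1$ outside of $\Gamma$ the integrand decays exponentially in $N$ away from $\Gamma$, reducing the analysis to a small neighbourhood of $\Gamma$, with the complementary contribution of size $O(N^{-\infty})$.

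In such a neighbourhood the combined phase is $tE + \Phi(t,x)$, where locally $F^N = e^{iN\Phi}$ with $\operatorname{Im}\Phi\geq 0$ and $\operatorname{Im}\Phi|_\Gamma = 0$. Using the defining relation $\nabla s = i\tau\, dt\otimes s$ together with the almost-holomorphic extension property $\overline{\partial}F = 0$ to infinite order on $\Gamma$, one identifies $\partial_t\Phi|_\Gamma = \tau$, so that the critical equation in $t$ becomes $\tau(t,x) = -E$. Because $-E$ is a regular value of $\tau$, the critical locus is the smooth submanifold $\Gamma_E = \tau^{-1}(-E)$ of $\Gamma$ and each critical point is non-degenerate in the $t$-direction.

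The main technical obstacle is the identification of the Hessian $\partial_t^2\Phi$ at such a critical point with $iB(t,x)$. This comes out of the geometric relation $d\tau\wedge\alpha = iB\, dt\wedge\alpha$ for $\alpha\in K_x$: differentiating $\partial_t\Phi|_\Gamma = \tau$ along $\partial_t$ and tracking both sides via the isomorphism $\Xi:K_\Gamma\simeq\det(T^*\Gamma)\otimes\Cm$ recalled in Section \ref{sec_quantum} shows that the Hessian and the ratio $B$ are measured by the same top form on $\Gamma$, once one accounts for the almost-holomorphic corrections, which vanish along $\Gamma$. The complex stationary phase formula then applies, with the square root taken with positive real part being the correct convention for complex Gaussian integrals whose quadratic form has non-negative imaginary part.

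Putting this together, each critical point $t_*$ with $(t_*,x)\in\Gamma_E$ contributes
\[
\left(\frac{2\pi}{N}\right)^{\frac{1}{2}} B(t_*,x)^{-\frac{1}{2}} e^{iNt_*E}\, F^N(t_*,x)\, a(t_*,x,N) + O(N^{-1}),
\]
and the prefactors $(N/2\pi)^{1/2}$ from the Fourier transform and $(2\pi/N)^{1/2}$ from stationary phase cancel, leaving exactly the $(N/2\pi)^{n/4}$ prefactor required of a Lagrangian state on $M$. At a critical point $e^{iNt_*E}F^N(t_*,x) = \bigl(e^{it_*E}s(t_*,x)\bigr)^N = s^E(t_*,x)^N$, and the principal amplitude is $\sigma(t_*,x)/B(t_*,x)^{1/2} = \sigma^E(t_*,x)$. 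Summing over the locally finitely many sheets of $j_E:\Gamma_E\to M$ above $x$ and invoking the existence/uniqueness modulo $O(N^{-\infty})$ of Lemma \ref{lLagr}, one concludes that $\mathcal{F}_N^{-1}(\psi_N)(E)$ is the Lagrangian state associated with the triple $(j_E, s^E, \sigma^E)$, which is the claim.
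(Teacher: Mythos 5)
The paper itself does not prove Proposition \ref{prop_Fourier_Lagr}; it is cited verbatim from Charles and Le Floch \cite{char21}, Chapter 5.3, and no argument is given in this article. Your stationary-phase reconstruction is the standard route and is, in outline, the argument that \cite{char21} carry out, so the approach is correct: you split off the exponentially small contribution away from $\Gamma$ using $|F|<1$ there, identify $\partial_t\Phi|_{\Gamma}=\tau$ from $\nabla s=i\tau\,dt\otimes s$ in a local trivialization (where the induced connection on $\Cm_I\boxtimes L$ has no $dt$-component), read off the critical locus $\tau=-E$, apply complex stationary phase, and check that the $(N/2\pi)^{1/2}\cdot(2\pi/N)^{1/2}$ prefactors cancel so that the result again carries the prefactor $(N/2\pi)^{n/4}$ expected of an $n$-dimensional Lagrangian state on $M$. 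The appeal to the uniqueness part of Lemma \ref{lLagr} to glue the local contributions at multiple sheets is also the right move.

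The one place where your sketch is genuinely incomplete is the identification of the $t$-Hessian with $iB$. Note that $\tau$ is only defined on $\Gamma$, while $\partial_t^2\Phi$ is a second derivative transverse to $\Gamma$ (the coordinate vector $\partial_t$ is not, in general, tangent to $\Gamma$: $T\Gamma$ contains $Y=\partial_t+X_t$, not $\partial_t$). So one cannot literally ``differentiate $\partial_t\Phi|_{\Gamma}=\tau$ along $\partial_t$''; the correct computation expands $\Phi$ to second order around a point of $\Gamma_E$, uses $\operatorname{Im}\Phi\geq 0$ with equality on $\Gamma$, the infinite-order vanishing of $\overline\partial_M\Phi$ on $\Gamma$, and the defining relation $d\tau\wedge\alpha=iB\,dt\wedge\alpha$ on $\Gamma$ to show that the ratio of top forms on $\Gamma$ which $B$ encodes is precisely what the Gaussian integral in $t$ produces. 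This is exactly where the $O(n)$-dimensional geometry enters and where $B\neq 0$ (which the paper states after the proposition, and which you use implicitly to claim non-degeneracy of the critical points) must be justified. Your phrase ``once one accounts for the almost-holomorphic corrections, which vanish along $\Gamma$'' points in the right direction but replaces a computation with a gesture; filling that in is the technical heart of the proof in \cite{char21}, and without it the argument would be circular, since $B\neq 0$ is equivalent to the non-degeneracy you assume when invoking stationary phase.
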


We notice that the norm of this Lagrangian state will be equal to $|B(t,x)^{-\frac{1}{2}}|$ times the norm of $\psi_N$, but one can check that $B(t,x)$ is non-zero and continuous, hence this term is bounded.

We can write the quantum propagator as a Lagrangian state, and we saw that
\[
\rho(N(E-T_N)) = N^{-\frac{1}{2}}\mathcal{F}_N^{-1}(\hat{\rho}(t)U_{N,t})(E)
\]
hence, applying Proposition \ref{prop_Fourier_Lagr} to this term we get the next result.

\begin{proposition}[\cite{char21} Chapter 6.2]
\label{prop_quantum_prop}
Let $E$ be a regular value of $H$ and $\rho$ be a $C^{\infty}$ function which Fourier transform is $C^{\infty}$ with compact support. Then the Schwartz kernel of $\rho(N(E-T_N))$ is a Lagrangian state associated with the triple $\lpar j^E,s^E,\sigma^E\rpar$ where

\begin{align*}
\Gamma_E & = \mathbb{R}\times H^{-1}(E)\\
j_E(t,x) & = (\phi_t(x),x)\in M^2\\
s^E\lpar j_E(t,x)\rpar & = \sigma^{q}_t(x)\\
\sigma^E\lpar j_E(t,x)\rpar & = \hat{\rho}(t)\left(\frac{C_t(x)}{B(t,x)}\right)^{\frac{1}{2}}e^{-i\int_0^t H^{sub}\circ\phi_r(x)dr}T_t^{L'}(x)
\end{align*}
for $(t,x)\in \Gamma_E$.
\end{proposition}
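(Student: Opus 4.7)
The plan is to combine the preceding lemma, which writes $\rho(N(E-T_N)) = N^{-1/2}\mathcal{F}_N^{-1}(\hat{\rho}(t)U_{N,t})(E)$, with Corollary \ref{corpropq} expressing the quantum propagator as a Lagrangian state, and Proposition \ref{prop_Fourier_Lagr} describing the inverse $N$-Fourier transform of such a state. First, Corollary \ref{corpropq} yields that (after the normalization $(N/2\pi)^{-n/2}$) the Schwartz kernel of $U_{N,t}$ is a Lagrangian state associated to $(\Gamma, s, \sigma)$ with $\Gamma = \{(t, \phi_t(x), x) : t \in I, x \in M\}$, $s(t, \phi_t(x), x) = \sigma_t^{preq}(x)$ and $\sigma$ as in the corollary. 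Multiplying by $\hat{\rho}(t)$ preserves $(\Gamma, s)$, multiplies the principal symbol by $\hat{\rho}(t)$, and gives the state compact $t$-support inside $I$, as required to apply Proposition \ref{prop_Fourier_Lagr}.

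The next step is to compute the function $\tau$ attached to $s$ via $\nabla s = i\tau\, dt \otimes s$. Using
\[
s(t, \phi_t(x), x) = \sigma_t^{preq}(x) = e^{-i\int_0^t H\circ \phi_r(x)\, dr}\, T_t^L(x)(s_0(x))
\]
and the fact that $T_t^L(x)(s_0(x))$ is by definition parallel along the curve $r \mapsto (r, \phi_r(x), x)$, the covariant derivative along $Y = \partial_t + X_H \oplus 0$ reduces to the derivative of the scalar phase factor. This yields $\nabla_Y s = -i H(\phi_t(x))\, s$, hence
\[
\tau(t, \phi_t(x), x) = -H(\phi_t(x)) = -H(x),
\]
since $H$ is conserved along its own flow. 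In particular, $-E$ is a regular value of $\tau$ and $\tau^{-1}(-E) = \mathbb{R}\times H^{-1}(E)$, parameterized by $(t,x)$ with immersion into $M \times M$ given by $(t,x) \mapsto (\phi_t(x), x)$.

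Finally, Proposition \ref{prop_Fourier_Lagr} applied to $\hat{\rho}(t) U_{N,t}$ at energy $E$ produces the wanted Lagrangian state. Its section is
\[
s^E(j_E(t,x)) = e^{iEt}\, s(t, \phi_t(x), x) = e^{iEt}\cdot e^{-iEt}\, \sigma_t^q(x) = \sigma_t^q(x),
\]
where the phase cancellation uses $\int_0^t H\circ \phi_r(x)\, dr = Et$ on $H^{-1}(E)$, and its principal symbol is
\[
\sigma^E(j_E(t,x)) = B(t,x)^{-1/2}\,\hat{\rho}(t)\,\sigma(t, \phi_t(x), x) = \hat{\rho}(t)\left(\frac{C_t(x)}{B(t,x)}\right)^{1/2} e^{-i\int_0^t H^{sub}\circ\phi_r(x)\,dr}\, T_t^{L'}(x).
\]
The prefactor $N^{-1/2}$ from the lemma combines with the $N$-powers appearing in the two Lagrangian-state constructions to produce the correct overall normalization on the $(n+1)$-dimensional immersed manifold $\Gamma_E$.

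The only genuine computation is the identification $\tau = -H$ on $\Gamma$ and the ensuing phase cancellation $e^{iEt}\cdot e^{-iEt} = 1$ on the energy shell $H^{-1}(E)$; everything else is a direct invocation of results already stated. The main subtlety to watch out for is making sure the Chern connection computation is carried out consistently on $I \times M \times M$ (with $L \boxtimes \bar{L}$ replacing $L$) so that the notion of parallel transport used matches the one defining $T_t^L$.
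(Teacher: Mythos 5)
Your proof is correct and follows exactly the route the paper intends: combine the preceding lemma with Corollary \ref{corpropq} for the propagator kernel, compute $\tau=-H$ from the prequantum-lift phase, and apply Proposition \ref{prop_Fourier_Lagr}, with the phase cancellation $e^{itE}e^{-itE}=1$ on $H^{-1}(E)$ giving $s^E=\sigma^q_t$. The only slip is a side remark: $\Gamma_E=\mathbb{R}\times H^{-1}(E)$ has real dimension $2n$ (half the real dimension of $M^2$, hence Lagrangian there and giving the $(N/2\pi)^{n/2}$ normalization), not $n+1$.
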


It is now possible to give an explicit formula for $\rho(N(E-T_k))$.

\begin{theorem}
\label{corprincipal}
Let $E$ be a regular value of $H$ and $\rho$ be a $C^{\infty}$ function which Fourier transform is $C^{\infty}$ with compact support. We denote $\phi_t$ the Hamiltonian flow of $H_t$. There exists $\epsilon>0$ such that for all $x\in H^{-1}(E)$ the period $T_x\in ]0,+\infty]$ of $\lpar t\mapsto \phi_t(x)\rpar$ is greater than $2\epsilon$. Then for all $(y,x)\in M^2$
\begin{align}
\label{eq_result}
& \rho(N(E-T_N))(y,x) = \frac{N^{n-\frac{1}{2}}}{(2\pi)^n}\nonumber\\
& \times \sum\limits_{t\in S_{\epsilon}(y,x)} e^{iNtE-N|E-H(x)|}s^N(t,y,x)\hat{\rho}(t)B(t,x)^{-\frac{1}{2}}\sigma(t,y,x) + O(1)
\end{align}
where $s$ and $\sigma$ are given in Corollary \ref{corpropq} and
\[
S_{\epsilon}(y,x) = \lacc t_{\epsilon}(k,x,y) /\; k\in\mathbb{Z},[k\epsilon,(k+1)\epsilon]\cap I \neq \emptyset\racc,
\]
where $t_{\epsilon}(k,x,y)$ is defined with $I_{\epsilon}(k,x,y)$ the set of minimizer of $|y-\phi_t(x)|$ on $[k\epsilon,(k+1)\epsilon]\cap I$,
\[
t_{\epsilon}(k,x,y) = \begin{cases}
k\epsilon & \text{, if } I_{\epsilon} = \{(k+1)\epsilon\}\\
\inf I_{\epsilon} & \text{, else.} 
\end{cases}
\]

With this definition, if $k\neq k'$ then $t_{\epsilon}(k,x,y) \neq t_{\epsilon}(k',x,y)$ for all $x,y$. Furthermore, the number of elements in $S_{\epsilon}(y,x)$ is finite, and it is bounded independently of $x$ and $y$, as $I$ is a segment.
\end{theorem}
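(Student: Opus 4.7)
The plan is to identify $\rho(N(E-T_N))$ as a Lagrangian state via Proposition \ref{prop_quantum_prop}, then evaluate it pointwise using the local structure of Lemma \ref{lLagr}, with the period hypothesis resolving the self-intersections of the immersion $j_E$. Specifically, by the preceding lemma combined with Corollary \ref{corpropq} and Proposition \ref{prop_Fourier_Lagr}, as encoded in Proposition \ref{prop_quantum_prop}, the Schwartz kernel $\rho(N(E-T_N))(y,x)$ is a Lagrangian state on $M\times\overline{M}$ associated to the immersion $j_E(t,x)=(\phi_t(x),x)$ on $\Gamma_E=\Rm\times H^{-1}(E)$. The compact support of $\hat\rho$ restricts the contribution to a compact piece of $\Gamma_E$. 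The prefactor $N^{n-\frac{1}{2}}/(2\pi)^n$ in \eqref{eq_result} arises by stacking the explicit $N^{-\frac{1}{2}}$ from the preceding lemma, the $(N/2\pi)^{\frac{n}{2}}$ appearing in Corollary \ref{corpropq}, and the Lagrangian-state prefactor $(N/2\pi)^{\frac{n}{2}}$ corresponding to the ambient complex dimension $2n$ of $M\times\overline{M}$.

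Next, compactness of $H^{-1}(E)$ (which holds because $M$ is compact and $E$ is a regular value) together with lower semicontinuity of the minimal period lets me pick $\epsilon>0$ such that $T_x>2\epsilon$ for every $x\in H^{-1}(E)$. I then cover $\supp\hat\rho\cap I$ by the subintervals $I_k=[k\epsilon,(k+1)\epsilon]\cap I$. The restriction of $j_E$ to $I_k\times U$, for $U$ a suitable neighbourhood of $H^{-1}(E)$, is a proper embedding: any two distinct times $t,t'\in I_k$ with $\phi_t(x)=\phi_{t'}(x)$ and $x\in H^{-1}(E)$ would force $|t-t'|$ to be a positive multiple of $T_x>2\epsilon$, contradicting $|t-t'|\leq\epsilon$.

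Lemma \ref{lLagr} then applies on each embedded piece and produces a local contribution of the form $F_k^N\cdot a_k$ where $F_k$ is an almost-holomorphic extension of $s^E$ satisfying $|F_k|=1$ on $j_E(I_k\times H^{-1}(E))$ and $|F_k|<1$ elsewhere. Summing over $k$ reproduces the sum over $t\in S_\epsilon(y,x)$ in \eqref{eq_result}: the definition of $t_\epsilon(k,x,y)$ selects within each $I_k$ the unique time producing the dominant local contribution, and the tie-breaking convention at the right endpoint $(k+1)\epsilon$ prevents double counting. The phase $e^{iNtE}$ is produced by the $e^{itE}$ twist turning $s$ into $s^E$ in Proposition \ref{prop_Fourier_Lagr}, while the factor $e^{-N|E-H(x)|}$ encodes the modulus $|F_k|^N$ of the almost-holomorphic extension of $s^E$ in the direction transverse to $\Gamma_E$ parametrized by the Hamiltonian $H$. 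The bound $|S_\epsilon(y,x)|\leq|I|/\epsilon+1$ follows from the compactness of $I$, and the $O(1)$ remainder absorbs both the subleading symbolic terms in each local Lagrangian-state expansion and the $O(N^{-\infty})$ contributions from points $(y,x)$ outside a neighbourhood of $\bigcup_k j_E(I_k\times U)$.

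The main obstacle is pinning down the precise exponential factor $e^{-N|E-H(x)|}$: Lagrangian states in K\"ahler quantization generically exhibit a quadratic Gaussian transverse decay, so the linear rate seen here reflects a specific cancellation between the almost-holomorphic extension of $s^E$ and a stationary-phase contribution from the inverse Fourier transform at the regular value $E$. Carrying this out concretely requires working in normal coordinates adapted to $H^{-1}(E)$ and explicitly unpacking the prequantum lift $\sigma^{preq}_t$ appearing in Corollary \ref{corpropq}.
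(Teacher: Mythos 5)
Your overall strategy is the same as the paper's: identify $\rho(N(E-T_N))$ as a Lagrangian state via Proposition \ref{prop_quantum_prop}, use Lemma \ref{lLagr} to reduce the computation to local contributions indexed by preimages of $(y,x)$ under $j_E$, and use the period hypothesis to guarantee that the restriction of $j_E$ to each slab $[k\epsilon,(k+1)\epsilon]\times H^{-1}(E)$ is a proper embedding. Your prefactor bookkeeping and your reading of the role of the tie-breaking rule for $t_\epsilon(k,x,y)$ (to keep the $t_\epsilon$ distinct for distinct $k$) are also consistent with what the paper does.

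Where you diverge is in the ``main obstacle'' you flag. You treat $e^{-N|E-H(x)|}$ as something that must be \emph{derived} from the transverse Gaussian decay of the Lagrangian state, and you propose to get it from a normal-coordinate computation combining the almost-holomorphic extension of $s^E$ with a stationary-phase contribution from $\mathcal{F}_N^{-1}$. The paper does nothing of the sort. The argument is a \emph{verification}, not a derivation: by Lemma \ref{lLagr} the Lagrangian state is uniquely determined, modulo $O(N^{-\infty})$, by the requirement that it agree with the local pieces $\psi^i_{A,N}$ near $j_E(K)$ and be $O(N^{-A})$ at any fixed point off $j_E(K)$. The paper simply writes down the candidate formula \eqref{eq_result} and checks, for each fixed $(y,x)$, that it meets both requirements: on $j_E(K)$ one has $H(x)=E$ so the factor $e^{-N|E-H(x)|}$ is identically $1$ and the remaining product reproduces the local expansion of Corollary \ref{corpropq}; at a fixed $(y,x)$ off $j_E(K)$ either $H(x)\neq E$ (then $e^{-N|E-H(x)|}\le e^{-CN}$) or no $t$ has $y=\phi_t(x)$ (then $|s(t,y,x)|<1$ strictly), and in both cases the term is $O(N^{-\infty})$. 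The exact rate of decay in the transition region is never used --- the factor $e^{-N|E-H(x)|}$ is an auxiliary cutoff chosen by hand, subject only to being $1$ on-shell and rapidly decaying off-shell, not a rate imposed by the geometry. So the normal-coordinate computation you propose is not needed, and pursuing it would be effort in a direction the paper's proof deliberately avoids.

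One further remark: you should not overstate the strength of the conclusion that your transverse-decay heuristic would give. Your instinct that Lagrangian states in K\"ahler quantization decay like $e^{-cN d^2}$ is correct, and that is \emph{not} $e^{-N|E-H(x)|}$; but the theorem only asserts agreement modulo $O(1)$, and the paper's case analysis (on-shell / strictly off-shell) does not attempt to control the transition region $|E-H(x)|\sim N^{-1/2}$ uniformly. If you want to strengthen the result to a uniform remainder, then something like your proposal would be necessary --- but as written, your ``main obstacle'' is not an obstacle to reproducing the paper's argument.
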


\begin{proof}
By definition of a Lagrangian state on an immersed manifold, we just have to check that the formula \eqref{eq_result} defines a function in $H^N$ modulo $O(1)$ which satisfy the asymptotic equalities of Proposition \ref{lLagr}.

Since $\hat{\rho}$ is continuous and has compact support, and thanks to the boundedness of $C_t(x)$ and $B(t,x)^{-\frac{1}{2}}$, the term $e^{iNtE}\hat{\rho}(t)B(t,x)^{-\frac{1}{2}}\sigma(t,y,x)$ is uniformly bounded thus, we just have to check the asymptotic expansion of the term $e^{-N|E-H(x)|}s^N(t,y,x)$ in equation \eqref{eq_result}. Here, the compact $K$ is replaced by $I\times H^{-1}(E)$, so let $(y,x) \notin \lacc (\phi_t(x),x)/t\in I, x\in H^{-1}(E)\racc$,
\begin{itemize}
\item if $H(x) \neq E$, then $|e^{-N|E-H(x)|}s^N(t,y,x)| \le e^{-CN} = O\left(N^{-\infty}\right)$.
\item If there is no $t$ such that $y = \phi_t(x)$, then according to corollary \ref{corpropq} $|s^N(t,y,x)|$ is strictly smaller than $1$ so
\[
\lver e^{-N|E-H(x)|}s^N(t,y,x) \rver \le |s(t,y,x)|^N = O\left(N^{-\infty}\right).
\]
\end{itemize}
Now let us suppose $H(x)=E$ and there exists $t^*\in I$ such that $y=\phi_{t^*}(x)$, then writing $T\in ]0,+\infty]$ the period of $t\mapsto\phi_t(x)$, the set $S_{\epsilon}$ is equal to
\[
S_{\epsilon}(x,y) = \lacc t^*+kT\in I /\; k\in\mathbb{Z}\racc\cup\{t_1,\cdots,t_r\}
\]
where the $(t_j)_{1\le j\le r}$ doesn't satisfy $\phi_{t_j}(x)=y$. Then, according to the previous arguments, the corresponding terms are $O\left(N^{-\infty})\right)$, while the others can be computed with Corollary \ref{corpropq}, which gives for $\rho(N(E-T_N))(y,x)$
\[
\frac{N^{n-\frac{1}{2}}}{(2\pi)^n} \sum\limits_{t=t^*+kT\in I} e^{iNtE}\sigma^{preq}_t(x)^N \hat{\rho}(t)B(t,x)^{-\frac{1}{2}}C_t(x)^{\frac{1}{2}}e^{-i\int_0^tH_r^{sub}\circ\phi_r(x)}T_t^{L'}(x) + O(1)
\]
But we chose $H$ time independent, so $H\circ\phi_t(x)=H(x)$ for all $t\in I$, hence
\[
e^{iNtE}\sigma^{preq}_t(x)^N = \lpar e^{itE-i\int_0^tH(x)dr}\sigma^{q}_t(x) \rpar^N = \sigma^{q}_t(x)^N
\]
We finally get
\begin{align*}
& \rho(N(E-T_N))(y,x) = \frac{N^{n-\frac{1}{2}}}{(2\pi)^n}\\
& \times \sum\limits_{t\in I/y=\phi_t(x)} \sigma^{q}_t(x)^N \hat{\rho}(t)\left(\frac{C_t(x)}{B(t,x)}\right)^{\frac{1}{2}}e^{-i\int_0^tH_r^{sub}\circ\phi_r(x)}T_t^{L'}(x) + O(1)
\end{align*}
which is consistent with Proposition \ref{prop_quantum_prop}.

It only remains to prove the holomorphy, but on $J_E(\Gamma_E)$ the formula match Proposition \ref{prop_quantum_prop} modulo $O(1)$, while outside that set, it is equal to $O\lpar N^{-\infty}\rpar$. Hence, adding some negligible terms won't change the formula modulo $O(1)$, and we can make it holomorphic everywhere.

In the end, the formula match the Lagrangian state described in Proposition \ref{prop_quantum_prop}, so there is equality modulo $O(1)$.
\end{proof}

Now we want to use this formula to estimate $\rho(N(E-T_N))(x,x)$, as it is equal to a weighted sum of squares of $T_N$'s eigenfunctions' norms evaluated in $x$. For that purpose, we first need a preliminary lemma on the distance between eigenvalues.

\begin{lemma}
\label{lem_dist_eigen}
Let $E$ be a regular value of $H$, then there exists $C>0$ such that for $N$ large enough, the interval $\left[E-\frac{C}{N},E+\frac{C}{N}\right]$ contains at least one eigenvalue of $T_N$.
\end{lemma}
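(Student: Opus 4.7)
The strategy is to compute the trace $\tr \rho(N(E-T_N))$ for a suitable test function in two ways and derive a contradiction if no eigenvalue of $T_N$ lies close enough to $E$. I would pick $\rho \in \mathcal{S}(\mathbb{R})$ with $\rho \geq 0$, $\rho(0) > 0$, and $\hat{\rho} \in C^{\infty}_c(\mathbb{R})$ with support in an interval $I$ small enough that, for every $x$ in a neighbourhood of $H^{-1}(E)$, the only element of $S_\epsilon(x,x) \cap \supp\hat{\rho}$ is $t=0$. This is possible because $E$ is a regular value of $H$, so the Hamiltonian flow $\phi_t$ has a uniformly bounded-below period on a neighbourhood of $H^{-1}(E)$, as already granted in the hypotheses of Theorem \ref{corprincipal}.

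On the spectral side, $\tr \rho(N(E-T_N)) = \sum_j \rho(N(E-\lambda_j)) \geq 0$. On the kernel side, applying Theorem \ref{corprincipal} and integrating against the Liouville measure gives, modulo the $O(1)$ pointwise error,
\[
\tr \rho(N(E-T_N)) \approx \frac{N^{n-\frac{1}{2}}}{(2\pi)^n}\hat{\rho}(0) \int_M B(0,x)^{-\frac{1}{2}}\, e^{-N|E-H(x)|}\, \frac{\omega^n}{n!}.
\]
Away from $H^{-1}(E)$ the exponential factor suppresses the integrand; a coarea decomposition along $H$ reduces the integral to a transverse Gaussian-type integral of width $O(N^{-\frac{1}{2}})$, interpreting $e^{-N|E-H(x)|}$ as shorthand for the true decay of $|F|^{2N}$ transverse to the Lagrangian immersion $j_E(\Gamma_E)$. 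This yields the lower bound $\tr\rho(N(E-T_N)) \geq c\, N^{n-1}$ for some $c > 0$ depending on $\hat{\rho}(0)$ and the Liouville volume of $H^{-1}(E)$.

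Now suppose, for contradiction, that no eigenvalue lies in $[E-C/N, E+C/N]$; every $\lambda_j$ then satisfies $N|E-\lambda_j| \geq C$. Applying the same trace procedure with a different non-negative test $\rho_R$ satisfying $\rho_R \geq \mathbf{1}_{[-R,R]}$ gives the Weyl-type counting bound
\[
\#\{j \,:\, N|E-\lambda_j| \leq R\} = O\bigl((1+R)\,N^{n-1}\bigr)
\]
uniformly for bounded $R$. Combined with the Schwartz decay $\rho(s) = O_M(|s|^{-M})$ for every $M$, a dyadic decomposition over shells $\{k \leq N|E-\lambda_j| < k+1\}$ gives
\[
\sum_{j}\rho(N(E-\lambda_j)) \leq \sum_{k \geq \lfloor C\rfloor}O\bigl((k+1)N^{n-1}\bigr)\cdot O(k^{-M}) = O\!\left(\frac{N^{n-1}}{C^{M-2}}\right),
\]
which contradicts the lower bound for $M = 3$ and $C$ large enough, proving the existence of a uniform $C$.

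The main obstacle is the trace lower bound, specifically extracting the correct power $N^{n-1}$ from the $N^{n-\frac{1}{2}}$ prefactor of Theorem \ref{corprincipal}. This requires interpreting the transverse decay of $|F|^{2N}$ as Gaussian, which contributes the crucial additional $N^{-\frac{1}{2}}$ factor from the transverse integration; this is the point at which the wave-packet structure of the Bargmann-type quantization enters, and it matches the expected Weyl count of $O(N^{n-1})$ eigenvalues in a window of width $O(1/N)$ near the regular value $E$.
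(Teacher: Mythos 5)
Your argument is genuinely different from the paper's: the paper feeds the trace asymptotic directly into a Tauberian lemma (Brummelhuis, Ch.~6) and reads off that the counting measure of eigenvalues in a window of width $O(1/N)$ is positive, whereas you avoid the Tauberian machinery entirely by combining a trace lower bound, a uniform Weyl-type counting bound, and a dyadic shell decomposition driven by the Schwartz decay of $\rho$. Both approaches rest on the explicit diagonal kernel of Theorem~\ref{corprincipal}, so they are not independent; but yours replaces the black-box lemma with an elementary estimate, which is a legitimate and instructive alternative.

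Two remarks are in order. First, the power: you extract $\tr\rho(N(E-T_N)) \gtrsim N^{n-1}$ by performing the transverse Gaussian integration against $|F|^{2N}$, which is the Weyl-consistent scaling. The paper, on the other hand, records $\hat\rho(0)\,\omega\,N^{n-\frac{1}{2}} + O(1)$ without visibly absorbing the extra $N^{-\frac{1}{2}}$ coming from the coarea reduction; your reading is the more careful one, and it is worth making explicit that the $e^{-N|E-H(x)|}$ appearing in Theorem~\ref{corprincipal} is a placeholder for the true Gaussian-type decay of $|F|^{2N}$ transverse to $j_E(\Gamma_E)$. Second, a genuine soft spot in your write-up: Theorem~\ref{corprincipal} controls the diagonal kernel only up to a pointwise $O(1)$ error, so integrating over the compact $M$ produces an $O(1)$ error on the trace. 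Your lower bound $c\,N^{n-1}$ dominates this for $n\ge 2$, but for $n=1$ it reduces to $c + O(1)$, which is not a contradiction-ready lower bound as stated. To cover $n=1$ you would need to sharpen the remainder in the kernel formula from $O(1)$ to $o(1)$ (or $O(N^{n-\frac{3}{2}})$), which the full Lagrangian-state expansion of \cite{char21} does give, but which Theorem~\ref{corprincipal} as written does not record. The paper's Tauberian route is less explicit but sidesteps the need to compare the leading term against a pointwise $O(1)$ error. Finally, your Weyl bound $\#\{j: N|E-\lambda_j|\le R\} = O((1+R)N^{n-1})$ is plausible (sum shifted copies $\rho(\cdot - k)$, $|k|\le R$, keeping $\widehat{\rho}$ fixed) but is asserted rather than derived; it is worth a sentence to check that $E-k/N$ stays in the regular-value regime uniformly for $|k|\le R$ and $N$ large, which holds here since $R$ is fixed.
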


\begin{proof}
Let $\rho$ be a $C^{\infty}$ function which Fourier transform is $C^{\infty}$ with support inside $]-\epsilon,\epsilon[$, where $\epsilon$ is given by corollary \ref{corprincipal}. Hence $\supp (\rho) \bigcap S_{\epsilon}(y,x) = \{0,t_n\}$ where $t_n$ corresponds to a negligible term, so we can write
\[
\rho(N(E-T_N))(x,x) = \frac{N^{n-\frac{1}{2}}}{(2\pi)^n} e^{-N|E-H(x)|}s^N(0,x,x)\hat{\rho}(0)B(0,x)^{-\frac{1}{2}}\sigma(0,x,x) + O(1).
\]
Since $C_t(x)$ and $B(t,x)^{-\frac{1}{2}}$ are uniformly bounded, this expression is integrable, and there exists $\omega >0$ such that
\[
\sum\limits_{j\in J} \rho(N(E-\lambda_{N,j})) = \int_{M} \rho(N(E-T_N))(x,x)dx = \hat{\rho}(0)\omega N^{n-\frac{1}{2}} + O(1)
\]
where $(\lambda_{N,j})_{j\in J}$ are the eigenvalues of $T_N$. We write this expression as $\gamma_{E,N}(\rho)$. Now, let us check that it satisfy the hypothesis of the Tauberian lemma (see Chapter 6 of \cite{brum95}). $\omega$ might depend on $E$, but it is uniformly bounded in $N$ and for $E$ in a compact set. Consider a number $C_1$, if it is a regular value of $H$ then $\gamma_{C_1,N}(\rho) = O(N^{n-\frac{1}{2}})$ else $\gamma_{C_1,N}(\rho) = O(1) = O(N^{n-\frac{1}{2}})$. So, choosing a $\rho$ which is greater than $1$ on $\left]-\infty,-\left(n-\frac{1}{2}\right)\right[$ we get
\[
\#\lacc j/\; \lambda_{N,j}\le C_1-\frac{n-\frac{1}{2}}{N}\racc = O(N^{n-\frac{1}{2}}).
\]
We can apply the Tauberian lemma, there exists a $C>0$ such that
\[
\#\lacc j\in J/\; |\lambda_{N,j}-E|\le \frac{C}{N}\racc = \sqrt{\frac{2}{\pi}}C\omega N^{n-\frac{1}{2}} + \mathit{o}(N^{n-\frac{1}{2}})
\]
and this number is greater than $0$ for $N$ large enough.
\end{proof}

Using this lemma, we can finally prove the main theorem. Recall that $T_N$ is a time independent, self-adjoint Berezin-Toeplitz on the compact Kähler manifold $M$ of dimension $n$, with principal symbol $H$.

\begin{theorem}
\label{th_mine}
Let $E$ be a regular value of $H$, $\mu_N$ a sequence of real numbers such that $|\mu_N - E| \le \frac{C}{2N}$ with $C>0$ and for $N$ large enough. Let $V_N$ be an associated $L^2$-normalised quasimodes, that is
\[
T_N V_N = \mu_N V_N + O_{L^2(M)}(N^{-\infty}).
\]
For all $N\in\mathbb{N}$ and $p\in[2,+\infty]$, we have
\[
\|V_N\|_{L^p(M)} = O\left(N^{\left(n-\frac{1}{2}\right)\left(\frac{1}{2}-\frac{1}{p}\right)}\right)
\]
\end{theorem}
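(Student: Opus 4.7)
The plan is to reduce the statement to the $L^\infty$ bound $\|V_N\|_{L^\infty(M)}=O\lpar N^{(n-1/2)/2}\rpar$ by interpolation with $\|V_N\|_{L^2(M)}=1$, and then to obtain that $L^\infty$ bound from a pointwise diagonal estimate on a spectral cutoff of $T_N$ near $E$. Concretely, pick $\chi\in\mathcal{S}(\Rm)$ real-valued with $\hat\chi\in C^\infty_c(\Rm)$ supported in $\lpar-\epsilon/2,\epsilon/2\rpar$ (with $\epsilon$ the constant from Theorem \ref{corprincipal}), and set $\rho=\chi^2$. Then $\rho$ and $\rho^2$ are non-negative Schwartz functions whose Fourier transforms are smooth and compactly supported in $\lpar-\epsilon,\epsilon\rpar$; both are therefore admissible in Theorem \ref{corprincipal}. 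Moreover $\rho(0)=\chi(0)^2>0$, so by continuity there exists $c>0$ with $\rho\ge c$ on $[-C/2,C/2]$. Writing $A_N=\rho(N(E-T_N))$, the self-adjointness of $T_N$ together with the quasimode relation gives, via functional calculus, $A_N V_N=\rho(N(E-\mu_N))V_N+O_{L^2}(N^{-\infty})$; since $|\mu_N-E|\le C/(2N)$ we have $\rho(N(E-\mu_N))\ge c$, so pointwise $|V_N(x)|\le c^{-1}|A_N V_N(x)|+O(N^{-\infty})$.

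Applying Cauchy--Schwarz, using $\|V_N\|_{L^2(M)}=1$ and the self-adjointness of $A_N$,
\[
|A_N V_N(x)|^2\le \int_M|A_N(x,y)|^2\,dy=A_N^2(x,x)=\rho^2(N(E-T_N))(x,x),
\]
so it suffices to bound this diagonal by $O(N^{n-1/2})$ uniformly in $x$. Apply Theorem \ref{corprincipal} to $\rho^2$ at $(y,x)=(x,x)$: the set $S_\epsilon(x,x)$ has cardinality bounded uniformly in $x$; the amplitude $\hat{\rho^2}(t)B(t,x)^{-1/2}\sigma(t,x,x)$ is continuous in $(t,x)$ on the compact set $\supp(\hat{\rho^2})\times M$, hence uniformly bounded; and the exponential factor $e^{-N|E-H(x)|}|s(t,x,x)|^N$ is bounded by $1$, since $e^{-N|E-H(x)|}\le 1$ on $M$ and $|s|\le 1$ globally by unitarity of the prequantum lift $\sigma^{preq}_t$. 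Hence $\rho^2(N(E-T_N))(x,x)=O(N^{n-1/2})$ uniformly in $x\in M$, and the previous chain yields $\|V_N\|_{L^\infty(M)}=O\lpar N^{(n-1/2)/2}\rpar$. Finally, for any $p\in[2,+\infty]$, H\"older's inequality produces
\[
\|V_N\|_{L^p(M)}\le\|V_N\|_{L^2(M)}^{2/p}\|V_N\|_{L^\infty(M)}^{1-2/p}=O\lpar N^{(n-1/2)(1/2-1/p)}\rpar,
\]
which is the claimed estimate.

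The main obstacle is neither the reduction to $L^\infty$ nor the interpolation, which are essentially formal, but rather verifying that the diagonal of $\rho^2(N(E-T_N))$ is $O(N^{n-1/2})$ \emph{uniformly} in $x\in M$; this is what justifies the sharp exponent $(n-1/2)/2$ rather than some larger power. The uniformity depends on three ingredients already present in Theorem \ref{corprincipal}: the uniformly bounded cardinality of $S_\epsilon(x,x)$ (because $I$ is compact and $\hat{\rho^2}$ has compact support), the uniform boundedness of the geometric amplitudes $B(t,x)^{-1/2}$ and $C_t(x)^{1/2}$ on $\supp(\hat{\rho^2})\times M$, and the pointwise control $|s(t,x,x)|\le 1$, which prevents any amplification along closed orbits of $\phi_t$ through $x$ lying on the energy shell $H^{-1}(E)$.
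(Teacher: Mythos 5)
Your proof is correct and follows essentially the same strategy as the paper: reduce to $L^\infty$, bound the diagonal of a spectral cutoff $\rho(N(E-T_N))$ via Theorem~\ref{corprincipal}, then interpolate. The cosmetic differences (using functional calculus on $V_N$ directly instead of decomposing it into eigenfunctions and splitting the index set $J_N$, and applying Theorem~\ref{corprincipal} to $\rho^2$ rather than $\rho$) do not change the substance; your route even avoids the paper's unnecessary appeal to Lemma~\ref{lem_dist_eigen}. Two minor points of precision: $\widehat{\rho^2}=\widehat{\chi^4}$ is supported in $(-2\epsilon,2\epsilon)$, not $(-\epsilon,\epsilon)$, but Theorem~\ref{corprincipal} only needs compact support so this is harmless; and passing from the $O_{L^2}(N^{-\infty})$ error in $A_N V_N=\rho(N(E-\mu_N))V_N+O_{L^2}(N^{-\infty})$ to a pointwise $O(N^{-\infty})$ tacitly uses the bound $\|w\|_{L^\infty}\lesssim N^{n/2}\|w\|_{L^2}$ for $w\in\mathcal{H}_N$ (from the diagonal of the Bergman kernel), which is standard and is also used implicitly in the paper's proof.
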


\begin{proof}
We first take $p=+\infty$, and we will deduce the others from this one. Since $T_N$ is self-adjoint, there is a Hilbert basis $(e_{N,j})_{j\in J}$ of eigenfunctions, we then write $(\lambda_{N,j})_{j\in J}$ the associated eigenvalues, with possible repetition. In particular there exists scalars $(a_{N,j})_{j\in J}$ such that

\[
V_N = \sum\limits_{j\in J} a_{N,j} e_{N,j}
\]
with $\sum\limits_{j\in J} |a_{N,j}|^2 = 1$. Let $J_N = \lacc j\in J /\; \lver \lambda_{N,j}-\mu_N\rver \le \frac{C}{2N} \racc$. By hypothesis on $V_N$

\[
\sum\limits_{j\in J} |a_{N,j}|^2 |\lambda_{N,j}-\mu_N|^2 = \lnor T_N V_N - \mu_N V_N \rnor_{L^2(M)} = O(N^{-\infty}).
\]
In particular

\[
\frac{C}{2N} \sum\limits_{j\notin J_N} |a_{N,j}|^2 \le \sum\limits_{j\notin J_N} |a_{N,j}|^2 |\lambda_{N,j}-\mu_N|^2 = O(N^{-\infty}),
\]
so $\sum\limits_{j\notin J_N} |a_{N,j}|^2 = O(N^{-\infty})$. Now, if $\rho\in C^{\infty}(\mathbb{R})$
\[
\rho(N(E-T_N))(y,x) = \sum\limits_{j\in J}\rho(N(E-\lambda_{N,j}))\overline{e_{N,j}(y)}e_{N,j}(x).
\]
Let $\rho$ be such that
\begin{itemize}
\item $\rho\ge \chi_{[-C,C]}$,
\item $\hat{\rho}$ is $C^{\infty}$ with compact support in $I$,
\end{itemize}
We can take for example, $2\chi_{[-2C,2C]}*(\mathcal{F}_N^{-1}(\ell))^2$ where $\ell\in C^{\infty}_c(I)$ is such that
\[
(\mathcal{F}_N^{-1}(\ell))^2 \ge \frac{1}{2}
\] 
on $[-C,C]$. According to Lemma \ref{lem_dist_eigen}, there exists $k\in J_N$ such that for $N$ large enough $|E-\lambda_{N,k}|\le \frac{C}{2N}$, and by construction of $J_N$, it implies that $|E-\lambda_{N,j}|\le \frac{C}{N}$ for all $j\in J_N$, that is $J_N \subset \lacc j\in J /\; N|E-\lambda_{N,j}|\le C \racc$. Using this inclusion and $\sum\limits_{j\notin J_N} |a_{N,j}|^2 = O(N^{-\infty})$, we get for all $x\in M$ 

\begin{align*}
|V_N(x)|^2
		& \le \sum\limits_{j\in J_N} |a_{N,j}|^2 |e_{N,j}(x)|^2 + O(N^{-\infty})\\
		& \le \chi_{[-C,C]}(N(E-T_N))(x,x) + O(N^{-\infty})\\
		& \le \rho(N(E-T_N))(x,x) + O(N^{-\infty}).
\end{align*}
According to Theorem \ref{corprincipal}, the term $\frac{1}{N^{n-\frac{1}{2}}}\rho	(N(E-T_N))(x,x)$ is equal to a uniformly bounded function on $M$ plus a negligible term. By taking the $\sup$ on $M$ in the previous inequality, we get
\[
\|V_N\|_{L^{\infty}(M)}^2 = O\left(N^{n-\frac{1}{2}}\right).
\]
Then by interpolation, since the functions are $L^2$-normalised, we get
\[
\|V_N\|_{L^p(M)}^p \le \|V_N\|_{L^{\infty}(M)}^{p-2} \|V_N\|_{L^2(M)}^2 = O\left(N^{\left(n-\frac{1}{2}\right)\left(\frac{p}{2}-1\right)}\right).
\]
\end{proof}

In fact, this result also contains the information of where the quasimodes concentrate. Using the equation in the proof of Lemma \ref{lem_dist_eigen} we get that for any compact $K$ such that $K\cap H^{-1} (E) = \emptyset$ and for any $2<p\le\infty$
\[
\lnor V_N \rnor_{L^p(K)} = O(N^{-\infty}).
\]

\subsection{Sharpness of the estimates}

In order to prove the sharpness of Theorem \ref{th_mine}, we show that the bound is in fact an asymptotic equivalent for well-chosen eigenfunctions of specific operators on the projective space.

We consider the projective space $\mathbb{CP}^n$ of dimension $n$, for $(z_0,\cdots,z_n)\in\mathbb{C}^{n+1}\backslash \{0\}$ we write $[z_0,\cdots,z_n]$ the associated line in $\mathbb{CP}^n$, then for $r\in\{0,\cdots,n\}$ we define the chart $(U_r,\Phi_r)$ by
\begin{align*}
\Phi_r : U_r & \rightarrow \mathbb{C}^n\\
		\left[z_0,\cdots,z_n\right] & \mapsto \left(\frac{z_0}{z_r},,\cdots,\frac{z_{r-1}}{z_r},\frac{z_{r+1}}{z_r},\cdots,\frac{z_n}{z_r}\right)\\
\Phi_r^{-1} : \left(w_1,\cdots,w_n\right) & \mapsto \left[z_1,\cdots,z_{r},1,z_{r+1},\cdots,z_n\right]
\end{align*}
where $U_r = \lacc \left[z_0,\cdots,z_n\right]\in\mathbb{CP}^n/\; z_r \neq 0\racc$. Henceforth, we will use the local coordinates $(w_1,\cdots,w_n)$ to work on $\Cm P^n$. In this system of coordinates, the Fubini-Study symplectic form is given by $\omega_{[z_0,\cdots,z_n]} =$
\[
 \frac{i}{(1+\sum_{1\le j\le n}|w_j|^2)^2}\sum\limits_{1\le l,m\le n}\left(\left(1+\sum_{1\le j\le n}|w_j|^2\right)\delta_{l,m}-\overline{w_l}w_m\right)dw_l\wedge d\overline{w_m}.
\]
We then consider the Toeplitz quantization on this manifold as described by Le Floch (Chapter 4.4 of \cite{lef18}). For the hermitian complex line bundle we choose the dual of the tautological bundle

\[
\mathcal{O}(1) = \lacc \left([u],v\right)\in \mathbb{CP}^n\times \mathbb{C}^{n+1}/\; v\in \mathbb{C}\frac{\overline{u}}{|u|^2}\racc.
\]
We fix a reference holomorphic section
\begin{align}
s: U_r & \rightarrow \mathbb{C}^{n+1}\\
	[z_0,\cdots,z_n] & \mapsto \frac{1}{1+\sum\limits_{1\le j\le n}|w_j|^2}\left(\overline{\frac{z_0}{z_r}},\cdots,\overline{\frac{z_{r-1}}{z_r}},1,\overline{\frac{z_{r+1}}{z_r}},\cdots,\overline{\frac{z_n}{z_r}} \right).
\end{align}
then we can check, using Theorem \ref{thconstr} that $\mathcal{O}(1)$ is a pre-quantized bundle for $\Cm P^n$

\begin{align*}
\log(h(s,s)) & = -\log\left(1+\sum\limits_{1\le j\le n}|w_j|^2\right)\\
\overline{\partial}\partial(\log(h(s,s)) & = \overline{\partial}\left(\sum\limits_{1\le l\le n}\frac{-\overline{w_l}}{1+\sum_{1\le j\le n}|w_j|^2}dw_l\right)\\
		& = \sum\limits_{1\le l,m\le n}\frac{-\delta_{l,m}\left(1+\sum_{1\le j\le n}|w_j|^2\right)+\overline{w_l}w_m}{\left(1+\sum_{1\le j\le n}|w_j|^2\right)^2}d\overline{w_m}\wedge dw_l\\
		& = -i\omega.
\end{align*}
The space $\mathcal{H}_N$ is made of sections of the form
\[
[z_0,\cdots,z_n] \mapsto f(z_0,\cdots,z_n)\left(\frac{\overline{z}}{|z|^2}\right)^{\otimes N}
\]
where $|z|^2=\sum\limits_{0\le j\le n} |z_j|^2$ and with $f$ holomorphic and homogeneous of degree $N$ for that expression to be well-defined. The only functions satisfying such properties are the homogeneous polynomials of degree $N$, we write $\mathbb{C}_N[X_0,\cdots,X_n]$ the space of these functions. In particular, in local coordinates on $U_r$ we get
\[
[z_0,\cdots,z_n] \mapsto P(w_1,\cdots,w_{r-1},1,w_r,\cdots,w_n)\left(\frac{(\overline{w_1},\cdots,\overline{w_{r-1}},1,\overline{w_r},\cdots,\overline{w_n})}{1+|w|^2}\right)^{\otimes N}
\]
with $P\in\mathbb{C}_N[X_0,\cdots,X_n]$. Though, there exists a bijection between this space and the space $\mathbb{C}_{\le N}[Z_1,\cdots,Z_n]$ of polynomials of degree at most $N$ on $n$ variables, given by
\begin{align*}
P\in \mathbb{C}_N[X_0,\cdots,X_n] & \mapsto Q(Z_1,\cdots,Z_n) = P(Z_1,\cdots,Z_{r-1},1,Z_r,\cdots,Z_n)\\
Q\in \mathbb{C}_{\le N}[Z_1,\cdots,Z_n] & \mapsto P(X_0,\cdots,X_n) = X_r^N Q(\frac{X_0}{X_r},\cdots,\frac{X_{r-1}}{X_r},\frac{X_{r+1}}{X_r},\cdots,\frac{X_n}{X_r})
\end{align*}
The quantum space is thus described locally by
\[
\mathcal{H}_N = \lacc \frac{Q(w_1,\cdots,w_n)\tilde{w}^{\otimes N}}{\langle w \rangle^{2N}}/\; Q\in\mathbb{C}_{\le N}[X_1,\cdots,X_n]\racc.
\]
where $\tilde{w}=(\overline{w_1},\cdots,\overline{w_{r-1}},1,\overline{w_r},\cdots,\overline{w_n})$ and $\langle w \rangle=\sqrt{1+|w|^2}$. let us write $dw^{\wedge} = dw_1\wedge d\overline{w_1}\wedge\cdots\wedge dw_n\wedge d\overline{w_n}$, then the volume form on that space is
\begin{align*}
\omega^n & = \frac{i^n}{\langle w \rangle^{4n}} \sum\limits_{a,b\in\{1,\cdots,n\}^n} \bigwedge\limits_{1\le j\le n} (\langle w \rangle^2 \delta_{a_j,b_j} - \overline{w_{a_j}}w_{b_j})dw_{a_j}\wedge\overline{w_{b_j}}\\
		& = \frac{i^n}{\langle w \rangle^{4n}} \sum\limits_{\sigma,\tau\in\mathfrak{S}_n} \bigwedge\limits_{1\le j\le n} (\langle w \rangle^2 \delta_{\sigma(j),\tau(j)} - \overline{w_{\sigma(j)}}w_{\tau(j)})dw_{\sigma(j)}\wedge\overline{w_{\tau(j)}}\\
		& = \frac{i^n}{\langle w \rangle^{4n}} \sum\limits_{\sigma,\tau\in\mathfrak{S}_n} \left(\prod\limits_{1\le j\le n} (\langle w \rangle^2 \delta_{\sigma(j),\tau(j)} - \overline{w_{\sigma(j)}}w_{\tau(j)})\right)(-1)^{\epsilon(\sigma)+\epsilon(\tau)} dw^{\wedge}\\
		& = \frac{i^n}{\langle w \rangle^{4n}} \sum\limits_{\sigma,\tau\in\mathfrak{S}_n} \left(\prod\limits_{1\le j\le n} (\langle w \rangle^2 \delta_{j,\tau\sigma^{-1}(j)} - \overline{w_j}w_{\tau\sigma^{-1}(j)})\right)(-1)^{\epsilon(\tau\sigma^{-1})}dw^{\wedge}\\
		& = \frac{i^n n!}{\langle w \rangle^{4n}} \det(\langle w \rangle^2 \delta_{i,j} - \overline{w_i}w_{j}) dw^{\wedge}.
\end{align*}
The matrix in the $\det$ can be written $\langle w \rangle^2I_n-(\overline{w_i}w_j)$, where the second matrix has rank $1$ with $\langle w \rangle^2$ as only non-zero eigenvalue and $\overline{w}$ for eigenvector. Hence, the determinant is equal to $\langle w \rangle^{2(n-1)}$ and
\[
\frac{\omega^n}{n!} = \frac{i^n}{\langle w \rangle^{2(n+1)}} dw^{\wedge} = \frac{2^n}{\langle w \rangle^{2(n+1)}} dw
\]
where $dw = dx_1\wedge dy_1\wedge\cdots\wedge dx_n\wedge dy_n$. We see $\mathcal{H}_k$ as a subspace of
\[
L^2\left(\mathbb{C},(\mathbb{C}^n)^{\otimes N},\frac{2^n}{\langle w \rangle^{2(n+1)}} dw\right).
\]

In the next computations we will need the following lemma
\begin{lemma}
\label{lcalcul}
Let $n\in\mathbb{N}^*$, $a\in\mathbb{N}^n$ and $b\in\mathbb{N}$ such that $|a|\le b$ then
\[
\int_{[0,+\infty[^n}\prod\limits_{1\le j\le n}r_j^{2a_j+1}\frac{1}{(1+r^2)^{b+n+1}}dr = \frac{1}{2^n(b+n)(b+n-1)\cdots(b+1)}\binom{b}{a}^{-1}
\]
where $r^2 = \sum\limits_{1\le j\le n}r_j^2$, and
\[
\binom{b}{a} = \frac{b!}{a_1!\cdots a_n!(b-|a|)!}.
\]
\end{lemma}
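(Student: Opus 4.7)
The strategy starts with the change of variables $u_j = r_j^2$ ($du_j = 2 r_j\, dr_j$), which combines the odd powers of $r_j$ in $r_j^{2a_j+1}$ with the measure and transforms the integral into
\[
I = \frac{1}{2^n} \int_{[0,+\infty[^n} \frac{u_1^{a_1}\cdots u_n^{a_n}}{(1+u_1+\cdots+u_n)^{b+n+1}}\, du.
\]
The prefactor $2^{-n}$ is exactly the one appearing in the claimed formula, so it suffices to prove that
\[
J_n(a,b) := \int_{[0,+\infty[^n} \frac{u_1^{a_1}\cdots u_n^{a_n}}{(1+u_1+\cdots+u_n)^{b+n+1}}\, du = \frac{a_1!\cdots a_n!(b-|a|)!}{(b+n)!},
\]
which coincides with $\binom{b}{a}^{-1}/[(b+n)(b+n-1)\cdots(b+1)]$ after simplification.

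I would then proceed by induction on $n$. The base case $n=1$ is the Beta integral $\int_0^\infty u^a(1+u)^{-b-2}\,du = B(a+1,b+1-a) = a!(b-a)!/(b+1)!$, which is well-defined precisely because $a\le b$. For the induction step, I would perform the $u_n$-integral last: setting $v = 1+u_1+\cdots+u_{n-1}$ and substituting $u_n = vw$ gives $\int_0^\infty u_n^{a_n}(v+u_n)^{-b-n-1}du_n = v^{-(b+n-a_n)}B(a_n+1,b+n-a_n)$. What remains is an integral of the same form in dimension $n-1$, namely $J_{n-1}(a',b')$ with $a'=(a_1,\dots,a_{n-1})$ and $b'=b-a_n$. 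The admissibility condition $|a'|=|a|-a_n\le b-a_n=b'$ is satisfied, so the induction hypothesis applies, and multiplying the Beta-function factorials with those supplied by the hypothesis yields the claimed value after cancellation of $(b+n-a_n-1)!$.

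A shorter alternative, if preferred, is to apply after the $u=r^2$ step the substitution $x_j = u_j/(1+\sum_k u_k)$, which maps $[0,+\infty[^n$ diffeomorphically onto the open simplex $\{x_j>0,\ \sum_j x_j<1\}$ with Jacobian $du = (1-\sum_j x_j)^{-n-1}\,dx$. The integral then collapses to the classical Dirichlet integral
\[
\int_{\{x_j>0,\ \sum x_j<1\}} \prod_j x_j^{a_j}\,(1-\sum_j x_j)^{b-|a|}\,dx = \frac{\prod_j \Gamma(a_j+1)\,\Gamma(b-|a|+1)}{\Gamma(b+n+1)},
\]
which is precisely the desired value.

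Neither route has a genuine obstacle; the one point requiring minor care is bookkeeping in the induction, namely verifying at each step that $(a',b')$ still satisfies $|a'|\le b'$ so the hypothesis may legitimately be applied, and cleanly telescoping the resulting factorials into the compact binomial form $\binom{b}{a}^{-1}$.
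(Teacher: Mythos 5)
Your proof is correct, and it takes a genuinely different route from the paper. The paper stays in the $r$-variables and computes the integral by repeated integration by parts: on the $r_1$-variable it brings the power $r_1^{2a_1+1}$ down to $r_1^1$ while raising the exponent in $(1+r^2)^{-(b+n+1)}$, evaluates the remaining one-variable integral explicitly, and then iterates on $r_2,\dots,r_n$, finally telescoping the resulting products of factorials. You instead substitute $u_j=r_j^2$ at the outset, which cleanly produces the factor $2^{-n}$ and turns the problem into the normalized moment integral $J_n(a,b)=\int_{[0,\infty)^n}\prod_j u_j^{a_j}(1+\sum u_j)^{-(b+n+1)}\,du$, which you then evaluate either by peeling off one variable at a time through the Beta integral and inducting on $n$, or, more directly, by the Dirichlet integral on the simplex. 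Your induction is sound: the inner $u_n$-integral produces $v^{-(b+n-a_n)}B(a_n+1,b+n-a_n)$, and the surviving $(n-1)$-dimensional integral is exactly $J_{n-1}(a',b')$ with $b'=b-a_n$, whose admissibility $|a'|\le b'$ reduces to the original $|a|\le b$; the factor $(b+n-a_n-1)!$ then cancels as you say. What each approach buys: the paper's is fully elementary and self-contained, at the cost of some bookkeeping across the $n$ integrations by parts; yours routes through standard special-function identities (Beta or Dirichlet), which makes the cancellations transparent and, in the Dirichlet form, essentially a one-line computation. The Dirichlet variant in particular makes the symmetry in $a_1,\dots,a_n$ and the role of $b-|a|$ immediate.
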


\begin{proof}
We integrate by parts multiple times, first on the $r_1$ variable
\begin{align*}
	& \int_{[0,+\infty[^n}\prod\limits_{1\le j\le n}r_j^{2a_j+1}\frac{1}{(1+r^2)^{b+n+1}}dr\\
	= & \frac{2a_1}{2(b+n)} \int_{[0,+\infty[^n}\prod\limits_{2\le j\le n}r_j^{2a_j+1}\frac{r_1^{2a_1-1}}{(1+r^2)^{b+n}}dr\\
	= & \cdots\\
	= & \frac{a_1(a_1-1)\cdots 2\cdot 1}{(b+n)b\cdots (b+n+1-a_1)} \int_{[0,+\infty[^n}\prod\limits_{2\le j\le n}r_j^{2a_j+1}\frac{r_1}{(1+r^2)^{b+n+1-a_1}}dr\\
	= & \frac{a_1!}{2(b+n)b\cdots (b+n-a_1)} \int_{[0,+\infty[^{n-1}}\prod\limits_{2\le j\le n}r_j^{2a_j+1}\frac{1}{(1+r_2^2+\cdots r_n^2)^{b+n-a_1}}dr.
\end{align*}
By iterating this process on each variable we get
\begin{align*}
	& \int_{[0,+\infty[^n}\prod\limits_{1\le j\le n}r_j^{2a_j+1}\frac{1}{(1+r^2)^{b+n+1}}dr\\
	= & \frac{a_1!}{2(b+n)b\cdots (b+n-a_1)} \int_{[0,+\infty[^{n-1}} \frac{ \prod\limits_{2\le j\le n}r_j^{2a_j+1} }{(1+r_2^2+\cdots r_n^2)^{b+n-a_1}}dr\\
	= & \frac{a_1!a_2!}{2^2(b+n)b\cdots (b+n-1-a_1-a_2)} \int_{[0,+\infty[^{n-2}} \frac{ \prod\limits_{3\le j\le n}r_j^{2a_j+1} }{(1+r_3^2+\cdots r_n^2)^{b+n-a_1}}dr\\
	= & \cdots\\
	= & \frac{a_1!a_2!\cdots a_n!}{2^n(b+n)b\cdots (b+n+1-n-a_1-a_2-\cdots-a_n)}\\
	= & \frac{a!(b-|a|)!}{2^n(b+n)!}
\end{align*}
\end{proof}

\begin{lemma}
Let $a\in\Nm^n$ such that $|a|\le N$, we write
\[
e_{N,a}(z) = \sqrt{\frac{(N+n)\cdots(N+1)}{(2\pi)^n}\binom{N}{a}}\frac{w^a\tilde{w}^{\otimes N}}{\langle w \rangle^{2N}}.
\]
Then $(e_{N,a})_{|a|\le N}$ is an orthonormal basis of $\mathcal{H}_N$. We will write $e_a$ instead to lighten the notation, and we note
\[
\Lambda_a = \sqrt{\frac{(N+n)\cdots(N+1)}{(2\pi)^n}\binom{N}{a}}.
\]
\end{lemma}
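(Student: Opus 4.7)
The plan is to verify that the proposed family is orthonormal and of the correct cardinality, so that orthonormality automatically upgrades to a basis.

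\textbf{Step 1: cardinality and spanning.} Following the discussion just before the statement, $\mathcal{H}_N$ is identified in the chart $U_r$ with sections of the form $Q(w)\tilde w^{\otimes N}/\langle w\rangle^{2N}$ where $Q$ ranges over $\Cm_{\le N}[X_1,\ldots,X_n]$. The monomials $\{w^a : a\in\Nm^n,\; |a|\le N\}$ are a vector space basis of $\Cm_{\le N}[X_1,\ldots,X_n]$, so the family $(e_{N,a})_{|a|\le N}$ is a basis of $\mathcal{H}_N$ as a complex vector space; it therefore suffices to check orthonormality for the Hermitian structure $\langle \cdot,\cdot\rangle_N$.

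\textbf{Step 2: reduce to an integral on $\Cm^n$.} Using the reference section $s$ from the statement preceding the proof, $s^{\otimes N}=\tilde w^{\otimes N}/\langle w\rangle^{2N}$ and $h(s,s)=1/\langle w\rangle^2$, so
\[
h_N\!\lpar e_{N,a},e_{N,b}\rpar = \Lambda_a\Lambda_b\,\overline{w^a}w^b\,\frac{1}{\langle w\rangle^{2N}}.
\]
Combined with the volume form $\omega^n/n! = 2^n\langle w\rangle^{-2(n+1)}dw$ computed just above, this gives
\[
\langle e_{N,a},e_{N,b}\rangle_N = 2^n\Lambda_a\Lambda_b\int_{\Cm^n}\frac{\overline{w^a}w^b}{\langle w\rangle^{2(N+n+1)}}dw.
\]
This uses only the chart $U_r$, but the integrand is intrinsic and the complement of $U_r$ has zero measure, so we may compute there.

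\textbf{Step 3: polar coordinates.} Writing $w_j = r_j e^{i\theta_j}$ gives $dw = \prod r_j\,dr_j\,d\theta_j$ and $\overline{w^a}w^b = \prod r_j^{a_j+b_j} e^{i(b_j-a_j)\theta_j}$. The angular integral $\prod_j\int_0^{2\pi}e^{i(b_j-a_j)\theta_j}d\theta_j$ vanishes unless $a=b$, so orthogonality is immediate. When $a=b$ it equals $(2\pi)^n$, yielding
\[
\|e_{N,a}\|_N^2 = 2^n(2\pi)^n\Lambda_a^2\int_{[0,+\infty[^n}\prod_{1\le j\le n}r_j^{2a_j+1}\frac{dr}{(1+r^2)^{N+n+1}}.
\]

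\textbf{Step 4: apply Lemma \ref{lcalcul}.} With $b=N\ge|a|$, the radial integral equals $2^{-n}(N+n)(N+n-1)\cdots(N+1)\rpar^{-1}\binom{N}{a}^{-1}$. Substituting back,
\[
\|e_{N,a}\|_N^2 = \frac{(2\pi)^n}{(N+n)\cdots(N+1)}\binom{N}{a}^{-1}\Lambda_a^2,
\]
and the definition of $\Lambda_a$ was arranged precisely so that this equals $1$.

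The only delicate point is a bookkeeping one, namely making sure all the normalising powers of $2$, $\pi$, $\langle w\rangle$ match between the definition of $h_N$, the volume form, and Lemma \ref{lcalcul}; once polar coordinates are introduced, every other step is automatic.
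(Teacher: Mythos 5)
Your proposal is correct and follows essentially the same route as the paper: in both cases the family is observed to be a basis by counting (the identification with $\Cm_{\le N}[Z_1,\ldots,Z_n]$), orthogonality is read off from the angular integrals in polar coordinates, and the normalisation constant is obtained by applying Lemma \ref{lcalcul} with $b=N$. The only difference is that you spell out the intermediate step $h_N(e_{N,a},e_{N,b}) = \Lambda_a\Lambda_b\,\overline{w^a}w^b\,\langle w\rangle^{-2N}$ before combining with the volume form, whereas the paper writes the resulting integral directly; the content is identical.
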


\begin{proof}
In view of the definition, this family is a basis of $\mathcal{H}_N$. If $a,b$ are multi-indexes such that $|a|,|b|\le N$
\[
\Lambda_a^{-1}\Lambda_b^{-1}\langle e_a,e_b\rangle = \int_{\mathbb{C}^n}\frac{\overline{w}^aw^b}{(1+|w|^2)^{N+n+1}}2^ndw
\]
then we use circular coordinates, that is $w_j = r_je^{i\theta_j}$ with $r_j\in [0,+\infty[$ and $\theta_j\in[0,2\pi]$.
\[
\Lambda_a^{-1}\Lambda_b^{-1}\langle e_a,e_b\rangle = 2^n\int_{[0,2\pi]^n}\int_{[0,+\infty[^n} \frac{1}{(1+r^2)^{N+n+1}} \prod\limits_{1\le j\le n}r_j^{a_j+b_j+1}e^{i\theta(b_j-a_j)}drd\theta.
\]
Hence, if $a\neq b$ at least one of the integrals in $\theta_j$ vanishes, thus the family is orthogonal. Now if $a=b$, by applying Lemma \ref{lcalcul}
\begin{align*}
\Lambda_a^{-2}\langle e_a,e_a\rangle & = (4\pi)^n \int_{[0,+\infty[^n} \frac{1}{(1+r^2)^{N+n+1}} \prod\limits_{1\le j\le n}r_j^{2a_j+1}dr\\
	& = \Lambda_a^2 \frac{(2\pi)^n}{(N+n)(N+n-1)\cdots(N+1)} \binom{N}{a}^{-1}\\
	& = 1.
\end{align*}
The family is orthonormal.
\end{proof}

\begin{corollary}
\label{corexproj}
$\Pi_N$ being the orthogonal projector on $\mathcal{H}_N$, we can write it with the orthonormal basis. Let $f\in L^2\left(\mathbb{C},(\mathbb{C}^n)^{\otimes N},\frac{\omega^n}{n!}\right)$ then
\[
\Pi_N(f) = \sum\limits_{|a|\le N} \left(\int_{\mathbb{C}^n} \frac{f(w)\cdot\overline{e_a}(w)}{(1+|w|^2)^{n+1}}2^ndw\right) e_a
\]
\end{corollary}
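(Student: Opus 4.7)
The approach is essentially bookkeeping: the preceding lemma established that $(e_a)_{|a|\le N}$ is an orthonormal Hilbert basis of $\mathcal{H}_N$, so the orthogonal projector onto this finite-dimensional subspace admits the standard expansion
\[
\Pi_N(f) = \sum_{|a|\le N} \langle e_a, f\rangle_{\mathcal{H}_N}\, e_a
\]
for every $f$ in the ambient $L^2$ space. It therefore suffices to unfold the inner product $\langle e_a, f\rangle_{\mathcal{H}_N}$ in the local coordinates given by the chart $(U_r, \Phi_r)$.

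Using the convention stated in the Notations section, $\langle g, h\rangle = \int \overline{g}\cdot h\, d\mathrm{vol}$, and recalling the volume-form computation performed just before the corollary, namely
\[
\frac{\omega^n}{n!} = \frac{2^n}{(1+|w|^2)^{n+1}}\,dw
\]
in local coordinates on $U_r$, one obtains directly
\[
\langle e_a, f\rangle_{\mathcal{H}_N} = \int_{\mathbb{C}^n} \frac{\overline{e_a}(w)\cdot f(w)}{(1+|w|^2)^{n+1}}\, 2^n\, dw.
\]
Substituting this expression into the projector formula yields exactly the stated identity. The integration over a single chart $\mathbb{C}^n \simeq U_r$ suffices because $\mathbb{CP}^n\setminus U_r$ has measure zero with respect to the Fubini–Study volume form.

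There is no genuine obstacle here; the proof is a one-line assembly of two ingredients already proved, namely the orthonormality/completeness of $(e_a)_{|a|\le N}$ in $\mathcal{H}_N$ and the explicit formula for $\omega^n/n!$ on the affine chart. The only minor point to verify is that the Hermitian pairing on the fibres of $\mathcal{O}(1)^{\otimes N}$, combined with the factor $\langle w\rangle^{-2N}$ built into the reference section $s$, is absorbed into the definition of $e_a$ so that $\overline{e_a}(w)\cdot f(w)$ is the correct scalar pointwise pairing; this is immediate from the explicit form of $e_a$ given just above.
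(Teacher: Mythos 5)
Your argument is correct and is precisely the intended one: the paper states the corollary without a separate proof because it follows immediately from the preceding lemma (orthonormality and completeness of $(e_a)_{|a|\le N}$ in $\mathcal{H}_N$) together with the computation of $\omega^n/n!$ in the affine chart, which is exactly what you assemble. Your closing remark about the fibrewise pairing is also right — the Hermitian metric $h_N$ on $\mathcal{O}(1)^{\otimes N}$, realised via the embedding in $\mathbb{CP}^n\times\Cm^{n+1}$, reduces to the Euclidean dot product of the $\tilde{w}^{\otimes N}$ factors, and the identity $\overline{\tilde w}\cdot\tilde w = \langle w\rangle^2$ makes the $\langle w\rangle^{-2N}$ normalisations work out so that $\overline{e_a}\cdot f$ equals $h_N(e_a,f)$ pointwise.
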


We now consider a specific operator which will saturate the estimate of Theorem \ref{th_mine}, we define the function $H\in C^0(\mathbb{CP}^n)$ such that
\[
H([z_0,z_1,\cdots,z_n]) = \frac{|z_1|^2}{|z_0|^2+|z_1|^2+\cdots+|z_n|^2}
\]
or in local coordinates
\[
H(w) = \frac{|w_1|^2}{1+|w|^2}.
\]

\begin{lemma}
Let $k\in\mathbb{N}$, then for $a\in\mathbb{N}^n$ such that $|a|\le k$
\[
T_N(H)(e_a) = \frac{a_1+1}{N+n+1}e_a
\]
\end{lemma}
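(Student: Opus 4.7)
The plan is to compute $T_N(H)\, e_a = \Pi_N(H \cdot e_a)$ by finding its coefficients in the orthonormal basis $(e_b)_{|b|\le N}$. Using Corollary \ref{corexproj} together with $e_a = \Lambda_a w^a \tilde w^{\otimes N}/\langle w\rangle^{2N}$ and $H(w) = |w_1|^2/\langle w\rangle^2$, the coefficient of $e_b$ in $\Pi_N(H\, e_a)$ is, up to the prefactor $\Lambda_a \Lambda_b$, an integral of the form
\[
\int_{\mathbb C^n} \frac{|w_1|^2\, w^a\, \overline{w^b}}{\langle w\rangle^{2(N+n+2)}}\, 2^n\, dw.
\]
The $\langle w\rangle$ exponent aggregates the factor $1/\langle w\rangle^{2N}$ coming from the Hermitian metric on $L^{\otimes N}$, the $1/\langle w\rangle^2$ from $H$, and the $1/\langle w\rangle^{2(n+1)}$ from the volume form.

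The key observation is $|w_1|^2\, w^a\, \overline{w^b} = w^{a + \alpha}\, \overline{w^{b + \alpha}}$ with $\alpha = (1, 0, \ldots, 0)$. In polar coordinates $w_j = r_j e^{i\theta_j}$, this factors as a product over $j$ of radial terms times angular phases $e^{i(a_j - b_j)\theta_j}$, which integrate to zero unless $a = b$. Hence $\Pi_N(H\, e_a)$ projects onto $e_a$ alone and the operator acts diagonally on this basis vector.

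To identify the eigenvalue I would apply Lemma \ref{lcalcul} with multi-index $a + \alpha$ (of size $|a|+1 \le N+1$) and parameter $N+1$ to evaluate the remaining radial integral. Combined with the explicit value of $\Lambda_a^2 = \binom{N}{a}(N+n)\cdots(N+1)/(2\pi)^n$, the $(N+n)\cdots(N+2)$ factors cancel, and the angular $(2\pi)^n$ also cancels, leaving the compact expression $(N+1)\binom{N}{a}/[(N+n+1)\binom{N+1}{a+\alpha}]$. A direct cancellation of factorials in the ratio $\binom{N}{a}/\binom{N+1}{a+\alpha}$ yields $(a_1+1)/(N+1)$, and combining this with the prefactor $(N+1)/(N+n+1)$ produces the announced eigenvalue $(a_1+1)/(N+n+1)$.

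The main obstacle is nothing conceptual but rather careful bookkeeping of the $\langle w\rangle$ exponents and the various normalization constants. The one subtlety to watch is that the reference section $s = \tilde w/\langle w\rangle^2$ of $\mathcal O(1)$ has pointwise Hermitian norm $1/\langle w\rangle$, so $h_N(s^{\otimes N}, s^{\otimes N}) = 1/\langle w\rangle^{2N}$ must be correctly combined with the Fubini--Study volume form $2^n dw/\langle w\rangle^{2(n+1)}$ and with the factor $1/\langle w\rangle^2$ in $H$ before applying Lemma \ref{lcalcul}, or the final binomial simplification will not produce the expected cancellation.
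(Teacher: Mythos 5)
Your proposal is correct and follows essentially the same route as the paper: expand $\Pi_N(He_a)$ in the orthonormal basis via Corollary \ref{corexproj}, pass to polar coordinates to see that the angular integrals kill all off-diagonal coefficients, then apply Lemma \ref{lcalcul} with the shifted multi-index $a+(1,0,\dots,0)$ and parameter $N+1$, and simplify the resulting binomial ratio against $\Lambda_a^2$. The bookkeeping of the $\langle w\rangle$ exponents and the cancellation $(N+1)\binom{N}{a}\big/\bigl[(N+n+1)\binom{N+1}{a+\alpha}\bigr]=(a_1+1)/(N+n+1)$ all check out.
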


\begin{proof}
According to corollary \ref{corexproj} and by definition of $T_N$
\[
T_N(H)(e_b) = \Pi_N(He_b) = \sum\limits_{0\le |a|\le N} \left(\int_{\mathbb{C}^n} H(w)\frac{e_b\cdot\overline{e_a}(w)}{\langle w \rangle^4}dw \right)e_a
\]
We compute for $|a|,|b|\le N$
\begin{align*}
		& \Lambda_a^{-1} \Lambda_b^{-1} \int_{\mathbb{C}^n} H(w)\frac{e_b\cdot\overline{e_a}(w)}{\langle w \rangle^4}2^ndw = 2^n\int_{\mathbb{C}^n} \frac{|w_1|^2w^b\overline{w}^a}{(1+|w|^2)^{N+n+2}}dw\\
	= & 2^n\int_{[0,2\pi]^n}\int_{[0,+\infty[^n} \frac{r_1^{a_1+b_1+3}}{(1+r^2)^{N+n+2}}\prod\limits_{2\le j\le n}r_j^{a_j+b_j+1} e^{i\theta_j(b_j-a_j)}drd\theta_1\cdots d\theta_n
\end{align*}
by taking polar coordinates on each variable $w_j$. If there is at least one $1\le j\le n$ such that $a_j\neq b_j$ then the integral in $\theta_j$ vanishes. Suppose now that $a=b$, we apply Lemma \ref{lcalcul}
\begin{align*}
		& \Lambda_a^{-2} \int_{\mathbb{C}^n} H(w)\frac{e_a\cdot\overline{e_a}(w)}{\langle w \rangle^4}2^ndw\\
	= & (4\pi)^n \int_{[0,+\infty[} \frac{r_1^{2a+3}}{(1+r^2)^{N+n+2}}\prod\limits_{2\le j\le n}r_j^{a_j+b_j+1}dr\\
	= & (2\pi)^n \frac{(a_1+1)a_1!(N+1-|a|-1)!}{(N+n+1)(N+n)!}\\
	= & \frac{a_1+1}{N+n+1} \Lambda_a^{-2}.
\end{align*}
Hence
\[
T_N(H)(e_a) = \frac{a_1+1}{N+n+1}e_a.
\]
\end{proof}

We are going to prove that the estimate of Theorem \ref{th_mine} is an asymptotic equivalence for this operator, hence the sharpness of the theorem. We take $\frac{1}{2}$ for the regular value of $H$, and we fix the sequence $(a_N)_{N\in\mathbb{N}}$ in $\mathbb{N}^n$, where $a_{N,1}$ is the integer part of $\frac{N}{2}$ and $a_{N,j}=0$ for $2\le j\le n$. That way, the eigenvalues $\frac{a_{N,1}+1}{N+n+1}$ converge to $\frac{1}{2}$.

\begin{theorem}
\label{th_optim_proj}
Let $p\in[2,+\infty]$ then there exists $C>0$ uniformly bounded with respect to $p$ such that
\[
\|e_{a_N}\|_{L^p(\mathbb{CP}^n)} \underset{N\to\infty}{\sim} CN^{\left(n-\frac{1}{2}\right)\left(\frac{1}{2}-\frac{1}{p}\right)}.
\]
\end{theorem}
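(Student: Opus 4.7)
The plan is to compute $\|e_{a_N}\|_{L^p(\mathbb{CP}^n)}^p$ in closed form via a Dirichlet (multivariable beta) integral, and then extract the equivalent with Stirling's formula. First, since $h(s,s) = \langle w\rangle^{-2}$, the pointwise hermitian norm is
\[
|e_{a_N}(w)|_{h_N} = \Lambda_{a_N}\,\frac{|w_1|^m}{\langle w\rangle^N}, \qquad m = \lfloor N/2\rfloor.
\]
Combining with the volume form $\omega^n/n! = 2^n\langle w\rangle^{-2(n+1)}\,dw$, passing to polar coordinates $w_j = r_j e^{i\theta_j}$, and making the change of variables $u_j = r_j^2$, I obtain
\[
\|e_{a_N}\|_{L^p(\mathbb{CP}^n)}^p = (2\pi)^n\,\Lambda_{a_N}^p \int_{[0,+\infty)^n} \frac{u_1^{mp/2}}{(1+u_1+\cdots+u_n)^{Np/2+n+1}}\,du.
\]

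The multivariable Dirichlet integral evaluates this to
\[
\|e_{a_N}\|_{L^p(\mathbb{CP}^n)}^p = (2\pi)^n\,\Lambda_{a_N}^p \cdot \frac{\Gamma\!\left(\tfrac{mp}{2}+1\right)\Gamma\!\left(\tfrac{(N-m)p}{2}+1\right)}{\Gamma\!\left(\tfrac{Np}{2}+n+1\right)},
\]
which can also be derived by iterating the one-variable beta integral, in the spirit of Lemma \ref{lcalcul} extended to non-integer exponents. Since $\Lambda_{a_N}^p = \bigl((N+n)\cdots(N+1)\bigr)^{p/2}(2\pi)^{-np/2}\binom{N}{m}^{p/2}$, the expression is now completely explicit.

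Next, I apply Stirling. The central ingredients are the central-binomial asymptotic $\binom{N}{\lfloor N/2\rfloor} \sim 2^N\sqrt{2/(\pi N)}$ and Stirling applied to the three gamma functions (each with argument of order $Np$). The exponential factor $2^{Np/2}$ from $\binom{N}{m}^{p/2}$ cancels exactly with the factor $2^{-Np/2}$ emerging from the gamma quotient (which boils down to $m^m(N-m)^{N-m}/N^N \sim 2^{-N}$), and after bookkeeping of the sub-leading $\sqrt{\cdot}$ corrections the remaining polynomial order is
\[
\|e_{a_N}\|_{L^p(\mathbb{CP}^n)}^p \underset{N\to\infty}{\sim} C_{n,p}\, N^{\,np/2 - p/4 - n + 1/2} = C_{n,p}\, N^{\,p(n-1/2)(1/2 - 1/p)}.
\]
Taking $p$-th roots yields the claimed equivalent with constant $C = C_{n,p}^{1/p}$, which I would check remains uniformly bounded (and bounded away from $0$) as $p\to\infty$ by observing that every $p$-dependent factor has the form $(\cdots)^{O(1/p)}$ and hence converges to a finite limit. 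The endpoint $p=+\infty$ is handled separately: $\Lambda_{a_N}|w_1|^m/\langle w\rangle^N$ is maximized at $r_2=\cdots=r_n=0$ and $r_1^2 = m/(N-m)$, giving $\|e_{a_N}\|_\infty^2 = \Lambda_{a_N}^2\, m^m(N-m)^{N-m}/N^N$, and Stirling again yields $\|e_{a_N}\|_\infty \sim C\,N^{(n-1/2)/2}$.

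The main obstacle is the Stirling bookkeeping: the $2^{Np/2}$ cancellation must be tracked precisely (it is exact only at leading order), and the polynomial-in-$N$ equivalent emerges only after the $\sqrt{\cdot}$ sub-corrections of all three gamma factors and of $\binom{N}{m}$ are properly combined. Verifying uniform boundedness of $C_{n,p}^{1/p}$ as $p\to\infty$ further requires control on the way the residual $p$-dependent constants behave, which is routine but requires care.
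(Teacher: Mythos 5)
Your proposal is correct and follows essentially the same route as the paper's own proof: reduce to polar/radial coordinates, reduce the radial integral to a quotient of Gamma functions, and extract the power of $N$ via Stirling; the $p=\infty$ case is handled separately via the explicit radial maximizer. The only noteworthy cosmetic difference is that you invoke the multivariable Dirichlet (beta) integral directly after the substitution $u_j=r_j^2$, which handles all $n$ radial variables in one step, whereas the paper first performs iterated integration by parts on $r_2,\dots,r_n$ in the style of Lemma~\ref{lcalcul} (where the exponent $1$ is an integer) and then isolates the single non-trivial variable $r_1$, reducing it to the one-variable beta integral via $t=1/(1+r_1^2)$ to deal with the non-integer exponent $p\nu_1$. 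Both routes arrive at exactly the expression
\[
\|e_{a_N}\|_{L^p}^p = (2\pi)^n\,\Lambda_{a_N}^p\,
\frac{\Gamma\!\left(\tfrac{mp}{2}+1\right)\Gamma\!\left(\tfrac{(N-m)p}{2}+1\right)}{\Gamma\!\left(\tfrac{Np}{2}+n+1\right)},
\]
since the Pochhammer factor $(\tfrac{p}{2}N+n)\cdots(\tfrac{p}{2}N+2)$ appearing in the paper is absorbed into $\Gamma(\tfrac{p}{2}N+n+1)$. Your Dirichlet-integral shortcut is arguably a touch cleaner, at the mild cost of needing the general formula for non-integer exponents, which you correctly note can itself be derived by iterating the one-variable beta integral.

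Where your writeup is weaker than the paper's is precisely the part you flag as the ``main obstacle'': the Stirling bookkeeping for the three Gamma factors and the binomial coefficient is only sketched, and the uniform boundedness of the constant in $p$ (which the statement explicitly asserts) is left as something you ``would check.'' The paper carries this out fully, exhibiting the explicit constant
\[
\left(\frac{2}{p}\right)^{\frac{n-\frac{1}{2}}{p}}
\left(\pi^{n+\frac{1}{2}} 2^{\,n-\frac{1}{2}}\right)^{-\left(\frac{1}{2}-\frac{1}{p}\right)},
\]
from which uniform boundedness in $p$ (and consistency with the separately computed $p=\infty$ constant) is immediate. To close your argument you would need to carry the Stirling expansion through the three Gamma factors $\Gamma(mp/2+1)$, $\Gamma((N-m)p/2+1)$, $\Gamma(Np/2+n+1)$ and $\binom{N}{m}^{p/2}$ together, verify that the exponential factors $2^{\pm Np/2}$ cancel exactly at leading order, and then collect the sub-leading square-root factors to obtain the polynomial rate $N^{p(n-1/2)(1/2-1/p)}$ and the explicit constant; this is exactly what the paper does, and your heuristic ``every $p$-dependent factor is $(\cdots)^{O(1/p)}$'' is the right intuition for why the constant converges as $p\to\infty$, but it should be made quantitative.
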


\begin{proof}
We begin with $p=+\infty$. First, we fix $N\in\Nm$ and a general multi-index $a$ such that $|a|<N$. The function $|e_a|$ depends only on the variables $r_j = |w_j|$, we look for the critical values in these coordinates.
\[
\frac{\partial |e_a|}{\partial r_j} = \Lambda_a \left(\prod\limits_{i\neq j}r_i^{a_i}\right) r_j^{a_j-1}(1+r^2)^{-\frac{N}{2}-1}(a_j(1+r^2)-Nr_j^2).
\]
This expression vanishes if the $r_j$ are zero or if the last parenthesis vanishes, in which case
\[
\left(N I_n - A\right)
\begin{pmatrix}
r_1^2\\
\vdots\\
r_n^2
\end{pmatrix}
=
\begin{pmatrix}
a_1\\
\vdots\\
a_n
\end{pmatrix}
\]
where we write $A$ the matrix
\[
\begin{pmatrix}
a_1 & a_1 & \cdots & a_1\\
\vdots & \vdots & \vdots & \vdots \\
a_n & a_n & \cdots & a_n
\end{pmatrix}
.
\]
As long as $|a|<N$, $NI_N-A$ is invertible with inverse
\[
\frac{1}{N}I_N+\frac{1}{N(N-|a|)}A
\]
because $A^2 = |a|A$. Hence, we get the critical point $r$ such that $r_j = \sqrt{\frac{a_j}{N-|a|}}$, on which the Hessian of $|e_a|$ is equal to
\[
2\frac{(N-|a|)^{\frac{N-|a|}{2}+1}}{N^{\frac{N}{2}+1}}\prod\limits_{1\le j\le n} a_j^{\frac{a_j}{2}}\left((\sqrt{a_ia_j})_{1\le i,j\le n} -NI_n\right)
\]
which is definite negative. We can then check that $|e_a|$ admits a global maximum on that point, thus
\[
\|e_a\|_{L^{\infty}(\mathbb{PC}^n)} = \Lambda_a \frac{(N-|a|)^{\frac{N-|a|}{2}}}{N^{\frac{N}{2}}}\prod\limits_{1\le j\le n} a_j^{\frac{a_j}{2}}
\]
with convention $a_j^{\frac{a_j}{2}} = 1$ if $a_j=0$.

We now apply this result to the multi-indexes $a_N$, and we look for an asymptotic  equivalent when $N\rightarrow +\infty$. If $N$ is even $a_{N,1} = \frac{N}{2}$ and if $N$ is odd $a_{N,1} = \frac{N-1}{2}$, but either way

\begin{align*}
& \binom{N}{a_N} = \binom{N}{a_{N,1}} \underset{N\to\infty}{\sim} \sqrt{\frac{2}{\pi}}N^{-\frac{1}{2}}2^N,\\
& a_{N,1}^{\frac{a_{N,1}}{2}}N^{-\frac{N}{2}}(N-|a_N|)^{\frac{N-|a_N|}{2}} \underset{N\to\infty}{\sim} 2^{-\frac{N}{2}}.
\end{align*}
Thus

\[
\|e_{a_{N,1}}\|_{L^{\infty}(\mathbb{CP}^n)} \underset{N\to\infty}{\sim} \left(2^{n-\frac{1}{2}}\pi^{n+\frac{1}{2}}\right)^{-\frac{1}{2}} N^{\left(n-\frac{1}{2}\right)\frac{1}{2}}.
\]

We now take $p<+\infty$, in polar coordinates we get
\begin{align*}
\Lambda_{a_N}^p \|e_{a_N}\|^p_{L^p} & = \int_{\mathbb{C}^n}\frac{|w^{a_N}|^p}{(1+|w|^2)^{\frac{p}{2}N+n+1}}2^ndw\\
	& = (4\pi)^n \int_{[0,+\infty]^n} \frac{r_1^{pa_{N,1}+1}}{(1+r^2)^{\frac{p}{2}N+n+1}}\prod\limits_{2\le j\le n}r_jdr.
\end{align*}
We integrate by parts multiple times as in the proof of Lemma \ref{lcalcul}, but here only on the variables $r_j$ for $j\ge 2$
\[
\Lambda_{a_N}^p \|e_{a_N}\|^p_{L^p} = \frac{2(2\pi)^n}{(\frac{p}{2}N+n)\cdots (\frac{p}{2}N+2)}\int_0^{+\infty}\frac{r_1^{pa_{N,1}+1}}{(1+r^2)^{\frac{p}{2}N+2}}dr_1.
\]
Although, we cannot use the same method on $r_1$, as $p$ is not an integer in general. Instead, we use the change of variables $r_1\mapsto t=\frac{1}{1+r_1^2}$
\begin{align*}
\int_0^{\infty} \frac{r_1^{pa_{N,1}+1}}{(1+r_1^2)^{\frac{p}{2}N+2}}dr & = \frac{1}{2}\int_0^1 t^{\frac{p}{2}N+2}(1-t)^{\frac{pa_{N,1}+1}{2}}t^{-\frac{pa_{N,1}+1}{2}}t^{-\frac{3}{2}}(1-t)^{-\frac{1}{2}}dt\\
	& = \frac{1}{2} \int_0^1 t^{\frac{p}{2}(N-a_{N,1})} (1-t)^{\frac{p}{2}a_{N,1}} dt\\
	& = \frac{1}{2} B\left(\frac{p}{2}(N-a_{N,1})+1,\frac{p}{2}a_{N,1}+1\right)
\end{align*}
with $B$ the beta function. We know that (Chapter $3$ of \cite{arti64})
\[
B(a,b) = \frac{\Gamma(a)\Gamma(b)}{\Gamma(a+b)}
\]
and using the Stirling formula for $a_{N,1} = \frac{N}{2}$ and $a_{N,1} = \frac{N-1}{2}$, we find in both cases
\begin{align*}
B\left(\frac{p}{2}(N-a_{N,1})+1,\frac{p}{2}a_{N,1}+1\right) & = \frac{\Gamma\left(\frac{p}{2}(N-a_{N,1})+1\right)\Gamma\left(\frac{p}{2}a_{N,1}+1\right)}{\Gamma\left(\frac{p}{2}N+2\right)}\\
	& \underset{N\to\infty}{\sim} \sqrt{\frac{\pi}{p}}2^{-\frac{pN}{2}}N^{-\frac{1}{2}}.
\end{align*}
Combining the equivalences, we get
\begin{align*}
\|e_{a_N}\|^p_{L^p} & \underset{N\to\infty}{\sim} \Lambda_{a_N}^{-p}\frac{(2\pi)^n}{(\frac{p}{2}N+n)\cdots (\frac{p}{2}N+2)} \sqrt{\frac{\pi}{p}}2^{-\frac{pN}{2}}N^{-\frac{1}{2}}\\
		& \sim \left(\frac{(N+n)\cdots (N+1)}{(2\pi)^n}\sqrt{\frac{2}{\pi}}N^{-\frac{1}{2}}2^N\right)^{\frac{p}{2}} (2\pi)^n \left(\frac{Np}{2}\right)^{-n+1} \sqrt{\frac{\pi}{p}}2^{-\frac{pN}{2}}N^{-\frac{1}{2}}\\
		& \sim 2^{\left(n-\frac{1}{2}\right)\left(2-\frac{p}{2}\right)} p^{-\left(n-\frac{1}{2}\right)} \pi^{\left(n+\frac{1}{2}\right)\left(1-\frac{p}{2}\right)} N^{\left(n-\frac{1}{2}\right)\left(\frac{p}{2}-1\right)}
\end{align*}
then
\[
\|e_{a_N}\|_{L^p} \underset{N\to\infty}{\sim} \left(\frac{2}{p}\right)^{\frac{n-\frac{1}{2}}{p}} \frac{1}{\left(\pi^{\left(n+\frac{1}{2}\right)} 2^{\left(n-\frac{1}{2}\right)}\right)^{\frac{1}{2}-\frac{1}{p}}} N^{\left(n-\frac{1}{2}\right)\left(\frac{1}{2}-\frac{1}{p}\right)}
\]
\end{proof}

In the calculus of  $\|e_a\|_{L^{\infty}(\mathbb{PC}^n)}$ for a general $a$, we see that the only multi-indexes with $|a|<N$ for which $e_a$ saturates the estimate are the ones with $n-1$ bounded indexes and one index of order $N$.

\subsection{Adaptation to flat spaces}

Now that we proved the sharp concentration estimate for Toeplitz operators on compact manifolds, we are interested in adapting it for flat spaces. To do so, we detail the usual symbolic calculus of Toeplitz operators on $\Cm$, and apply it to link them with Toeplitz operators on the Torus. Moreover, the Toeplitz operators on $\Cm^n$ are also linked to pseudodifferential ones on $\Rm^n$ through a unitary transformation. Using this link and Theorem \ref{th_mine}, we obtain a result for pseudodifferential operators on $\Rm^n$. 

Actually, we consider here a different definition of the Bargmann space than in Section \ref{sec_Fock}, it is given by
\[
H_{\Phi} = L^2(\Cm^n) \cap \lacc e^{-N\Phi(z)}v \;/\; v\text{ is holomorphic} \racc,
\]
where $\Phi(z) = \frac{\Im(z)^2}{2}$, and it is also equipped with the $L^2$ scalar product. The Bergman projector is then given by
\begin{align*}
\Pi_{\Phi} : L^2(\Cm^n) & \rightarrow H_{\Phi}\\
u & \mapsto \lpar z \mapsto \lpar \frac{N}{2\pi}\rpar^n e^{-\frac{N\Im(z)^2}{2}} \int_{\Cm^n} e^{-\frac{N(z-\overline{w})^2}{4}} e^{\frac{N\Im(w)^2}{2}} u(w) dw \rpar.
\end{align*}
and for a function $f\in C(\Cm^n)$ we can define the associated Toeplitz operator $T_{\Phi}(f) : u \mapsto \Pi_{\Phi} (fu)$. This construction was detailed by Rouby, Sjöstrand and V\~u Ng\d{o}c \cite{roub20}.

We recall that an order function $m : \Cm^n \rightarrow ]0,+\infty [$ is such that there exists $C>0$ and $L\in\Rm$ with
\[
m(z) \le C \langle z-w\rangle^L m(w)
\]
for all $w,z\in \Cm^n$. The main result of this section is then

\begin{theorem}
\label{th_link_Toeplitz}
Let $m$ be an order function and $f\in S(m)$ be real-valued and such that $\lver f(z) \rver \xrightarrow[|z|\rightarrow +\infty]{} +\infty$. Let $V_N \in H_{\Phi}(\Cm^n)$ be $L^2$-normalised and such that
\[
T_N(f)V_N = E_N V_N + O(N^{-\infty}),
\]
and the sequence $E_N$ converges to a regular value of the principal symbol of $f$. Then
\[
\lnor V_N \rnor_{L^p(\Cm^n)} = O\lpar N^{ \lpar n-\frac{1}{2} \rpar\lpar \frac{1}{2}-\frac{1}{p} \rpar } \rpar.
\]
\end{theorem}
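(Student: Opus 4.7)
The plan is to reduce this flat-space statement to the compact-manifold case (Theorem~\ref{th_mine}) by transferring the quasimode to a large Kähler torus. The argument relies on three ingredients: (i) the coercivity $|f(z)|\to+\infty$ makes $f^{-1}([E-\delta,E+\delta])$ compact for some $\delta>0$ consisting only of regular values of $f$; (ii) the standard concentration result for Berezin-Toeplitz quasimodes (\cite{dele16}, Proposition~3.1) gives $\|V_N\|_{L^2(W)} = O(N^{-\infty})$ on any open set $W$ at positive distance from $f^{-1}(\{E\})$; (iii) the Bergman kernel of $H_\Phi$ decays exponentially off the diagonal, so localising by a smooth cutoff is almost transparent at the level of holomorphic sections.

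Concretely, I would first pick $\chi\in C^{\infty}_c(\Cm^n)$ equal to $1$ on a neighbourhood of $f^{-1}([E-\delta,E+\delta])$, so that $\|V_N-\chi V_N\|_{L^2(\Cm^n)} = O(N^{-\infty})$ and it suffices to estimate $\|\chi V_N\|_{L^p(\Cm^n)}$. I would then embed $\supp \chi$ strictly inside a fundamental domain of a large complex torus $\Tm = \Cm^n/\Lambda$, equipped with a pre-quantum line bundle whose local Kähler potential agrees with $\Phi$ on that domain, and extend $f|_{\supp\chi}$ to a smooth real-valued $\tilde f$ on $\Tm$ for which $E$ remains a regular value with the same level set locally. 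Using the exponential off-diagonal decay of both Bergman kernels, the projection $\tilde V_N := \Pi_{\Tm,N}(\chi V_N)$ onto the space of holomorphic sections of $L^{\otimes N}\to\Tm$ is $L^2$-normalised up to $o(1)$, agrees with $\chi V_N$ modulo $O(N^{-\infty})$ in a neighbourhood of $f^{-1}(\{E\})$, and satisfies
\[
T_N(\tilde f)\tilde V_N = E_N \tilde V_N + O_{L^2(\Tm)}(N^{-\infty}).
\]
Theorem~\ref{th_mine} applied on the compact Kähler manifold $\Tm$ then yields the desired exponent $(n-\tfrac{1}{2})(\tfrac{1}{2}-\tfrac{1}{p})$ for $\tilde V_N$, which transfers back to $V_N$ via the cutoff.

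The main obstacle will be making the Bergman kernel comparison rigorous: for sections supported well inside the fundamental domain, the kernels of $\Pi_\Phi$ and of the torus Bergman projector must differ by $O(e^{-cN})$ in the relevant norms, and the quasimode relation must survive this identification. Both are consequences of the Gaussian off-diagonal estimates for Bergman kernels on pre-quantum line bundles (cf.~\cite{char03a}, \cite{char21}) combined with the symbolic calculus of Berezin-Toeplitz operators on $H_\Phi$ (\cite{roub20}), which yields $\chi' T_N(f)\chi = T_N(\chi' f\chi) + O(N^{-\infty})$ for compatible cutoffs and thus lets us truncate the polynomial growth of $f$ before passing to the torus. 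Once these comparisons are in place, the estimate is immediate from Theorem~\ref{th_mine}.
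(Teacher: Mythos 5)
Your overall strategy coincides with the paper's: localise the quasimode near the energy shell, view it as a section on a torus, apply Theorem~\ref{th_mine}, and transfer back. Where you diverge is in the localisation machinery and in the transfer-back step. The paper achieves the cutoff via the parametrix construction of Theorem~\ref{th_elliptic_inverse} and Corollary~\ref{cor_elliptic_inverse}: one writes $V'_N = T_N(q)T_N(\chi)V_N = V_N + O(N^{-\infty})$, obtaining a quasimode that remains a genuine element of $H_\Phi$ and can be treated as supported in a fixed square modulo negligible terms, without ever invoking a comparison of Bergman kernels between $\Cm^n$ and the torus. Your proposal instead truncates by an external cutoff and projects onto the torus Bergman space, which shifts the burden onto an explicit $O(e^{-cN})$ comparison between $\Pi_\Phi$ and $\Pi_{\Tm,N}$, a comparison you rightly flag as the main obstacle but do not carry out. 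Both routes are viable, but the parametrix route stays entirely inside the symbolic calculus already set up in the section and avoids the kernel comparison.

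There is also a concrete gap in your transfer-back step. You say the bound on $\tilde{V}_N$ ``transfers back to $V_N$ via the cutoff,'' but you have only an $L^2$ control $\|(1-\chi)V_N\|_{L^2} = O(N^{-\infty})$ from concentration, and $(1-\chi)V_N$ is not holomorphic, so this does not directly give an $L^p$ bound. The paper handles the analogous issue by observing that the difference $V_N - V'_N$ lies in $H_\Phi$, so one may apply the pointwise estimate $|\Pi_\Phi U_N| \le |e^{-N|\cdot|^2/4}*U_N|$ and Young's inequality (with $1+\tfrac1p = \tfrac12 + \tfrac1r$) to upgrade $L^2$-smallness of a holomorphic error to $L^p$-smallness for all $p\in[2,\infty]$, losing only a polynomial factor in $N$. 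You would need either this convolution argument applied to a holomorphic remainder, or a direct pointwise estimate on $V_N$ outside the cutoff region using the Gaussian off-diagonal decay of $K_\Phi$; as written, the final step is asserted rather than proved.
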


This quantization is different from Section \ref{sec_Fock} because of the structure of the proof of Theorem \ref{th_link_Toeplitz}. Since Theorem \ref{th_mine} works only on a compact manifold, we will localise a quasimode on a square, and modify it to make it periodic, which can be seen as a section of the torus. The point is, this quantization is more suitable to link the Toeplitz quantization on $\Cm^n$ of periodic functions and the quantization of the torus (See for example \cite{roub17}).

Before proving Theorem \ref{th_link_Toeplitz}, we need a symbolic calculus for this quantization. The following constructions and results are merely adaptations of the Chapter 4 of \cite{zwor12} to Berezin-Toeplitz operators.

\begin{definition}
For an order function $m : \Cm^n \rightarrow ]0,+\infty [$, let us define the associated space of symbols
\[
S(m) = \lacc f\in C^{\infty}(\mathbb{C}^n) \; /\; \forall \alpha \in \Nm^{2n}, \; \exists C_{\alpha}, \; |\partial^{\alpha_1} \overline{\partial}^{\alpha_2} f| \le C_{\alpha} m \racc
\]
and if $(f_j)_{j\in\Nm} \in S(m)^{\Nm}$, we say that $f\in S(m)$ is asymptotic to $\sum N^{-j} f_j$ if, for all $L\in\Nm$, $f-\sum\limits_{0\le j\le L-1}N^{-j} f_j = O_{S(m)}(N^{-L})$. This last equality means that for all $\alpha\in\Nm^2$ there exists $C_{\alpha} >0$ with
\[
\lver \partial^{\alpha_1} \overline{\partial}^{\alpha_2} \lpar 	f - \sum\limits_{0\le j\le L-1} N^{-j} f_j \rpar \rver \le \frac{C_{\alpha} m}{N^L}.
\]
By Borel's theorem, for all $(f_j)_{j\in\Nm} \in S(m)^{\Nm}$ there exists such symbol $f$, and we write $f\sim \sum N^{-j} f_j$. For $f\in S(m)$ we still define the associated Toeplitz operator by
\[
T_N(f):v \mapsto \Pi_{\Phi}(fv)
\]
with domain
\[
D(T_N(f)) = \lacc v\in H_{\Phi} / mv\in L^2\racc.
\]
\end{definition}

The following result gives the principal symbol of the composition of two Toeplitz operators. This result can be proved directly by calculus, or using the link with pseudodifferential operators and the corresponding Theorem in this context. We don't give the direct proof here as we will explicit the link with pseudodifferential operators in Proposition \ref{prop_link_pseudo_toeplitz}.

\begin{proposition}
In $\Cm^n$, let $m_1,m_2$ be two order functions, then for all $f,g \in S(m_1) \times S(m_2)$ there exists a symbol $f\# g\in S(m_1m_2)$ such that
\[
T_N(f) T_N(g) = T_N(f\# g)
\]
Furthermore, we have the expansion
\[
f\#g \sim fg + N^{-1} r
\]
where $r\in S(m_1m_2)$.
\end{proposition}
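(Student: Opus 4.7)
Following the indication in the paragraph above the statement, the cleanest route is to transfer the problem to the Weyl calculus on $\Rm^n$ via the FBI correspondence announced in Proposition \ref{prop_link_pseudo_toeplitz}, apply the well-known Moyal product there, and transport the result back. I assume that Proposition \ref{prop_link_pseudo_toeplitz} provides, for every order function $m$ and every $f\in S(m)$, a symbol $a_{\hbar}\in S(m)$ with $a_{\hbar}\sim f + O_{S(m)}(\hbar)$ such that $T_N(f)=\fbi^*\,Op_{\hbar}^W(a_{\hbar})\,\fbi$, together with the reverse statement that every such Weyl operator arises from a Toeplitz symbol in $S(m)$ uniquely modulo $O(\hbar^{\infty})$.

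The first step is to write, using $\fbi^*\fbi=\id$ on $L^2(\Rm^n)$,
\[
T_N(f)T_N(g) \;=\; \fbi^*\,Op_{\hbar}^W(a_{\hbar})\,Op_{\hbar}^W(b_{\hbar})\,\fbi,
\]
where $a_{\hbar}\in S(m_1)$ and $b_{\hbar}\in S(m_2)$ are the Weyl symbols associated to $f$ and $g$. The second step is to invoke the standard Weyl symbolic calculus (e.g.\ Theorem 4.18 in \cite{zwor12}): there exists $c_{\hbar}:=a_{\hbar}\,\#_W\, b_{\hbar}\in S(m_1 m_2)$ with $Op_{\hbar}^W(a_{\hbar})\,Op_{\hbar}^W(b_{\hbar})=Op_{\hbar}^W(c_{\hbar})$, and the Moyal expansion gives $c_{\hbar}=a_{\hbar}b_{\hbar}+O_{S(m_1 m_2)}(\hbar)=fg+O_{S(m_1 m_2)}(N^{-1})$. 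The third step is to apply Proposition \ref{prop_link_pseudo_toeplitz} in reverse: it produces a Toeplitz symbol $f\,\#\,g\in S(m_1 m_2)$ with $\fbi^*\,Op_{\hbar}^W(c_{\hbar})\,\fbi=T_N(f\,\#\,g)$ and $f\,\#\,g\sim c_{\hbar}\pmod{N^{-\infty}}$. Combining these three steps yields both assertions, with the remainder $r\in S(m_1 m_2)$ coming from the subleading term of the Moyal product together with the $O(N^{-1})$ gaps between $f,g$ and their Weyl counterparts $a_{\hbar},b_{\hbar}$.

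The main obstacle is to have Proposition \ref{prop_link_pseudo_toeplitz} stated with enough precision so that (a) the correspondence $T_N(\cdot)\leftrightarrow Op_{\hbar}^W(\cdot)$ preserves the symbol class $S(m)$ attached to an arbitrary order function rather than only a larger polynomial class, (b) it is surjective modulo $O(N^{-\infty})$, which requires a Borel-type realisation lemma producing the Toeplitz symbol $f\,\#\,g$ from the asymptotic series $c_{\hbar}$ on the pseudodifferential side, and (c) that the principal symbol reads the same in both quantisations so that the leading term of $f\,\#\,g$ is genuinely $fg$ rather than $fg$ shifted by an $O(N^{-1})$ correction. Once this dictionary is in place the proposition is essentially a translation of the Moyal calculus. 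A direct alternative would plug the explicit Bergman kernel of $H_{\Phi}$ into the composition and apply stationary phase at the antidiagonal $w=\overline{z}$ — the Gaussian contraction yields the product $fg$ at leading order and an explicit second-order differential expression at subleading order — but controlling the remainders uniformly against the order function $m_1 m_2$ is comparably tedious.
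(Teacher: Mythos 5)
The paper does not actually prove this proposition: it simply remarks that the result ``can be proved directly by calculus, or using the link with pseudodifferential operators,'' defers the FBI dictionary to Proposition~\ref{prop_link_pseudo_toeplitz}, and moves on. Your proposal fleshes out exactly the second route the paper alludes to, so the strategy is the intended one.

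Two points to tighten. First, a notational slip: with the conventions of Proposition~\ref{prop_link_pseudo_toeplitz} one has $T_N(f)=\fbi[N]\,Op_{\hbar}^W(a_{\hbar})\,\fbi[N]^*$ (and $\fbi[N]^*\fbi[N]=\id_{L^2(\Rm^n)}$), so the conjugation should read
\[
T_N(f)T_N(g)=\fbi[N]\,Op_{\hbar}^W(a_{\hbar})\,Op_{\hbar}^W(b_{\hbar})\,\fbi[N]^*+O(N^{-\infty}),
\]
not $\fbi^*(\cdots)\fbi$ as you wrote. Second, you are right to flag that Proposition~\ref{prop_link_pseudo_toeplitz} as stated only goes from a Weyl symbol $a\in S(m)$ to a Toeplitz symbol $f\in S(m)$; to start your argument you need the converse (Toeplitz $\to$ Weyl). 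This does hold and is essentially contained in the paper's formula $f(z)\sim e^{-\frac{\hbar}{4}\Delta}a(\Re z,-\Im z)$: the operator $e^{-\frac{\hbar}{4}\Delta}$ is invertible as a formal power series in $\hbar$ and preserves $S(m)$, so one can solve for $a$ order by order and realise it by Borel summation, but this inversion should be stated explicitly rather than absorbed into an ``I assume.'' With those two repairs the argument is complete: the Moyal product on the Weyl side gives $c_{\hbar}=a_{\hbar}\#_W b_{\hbar}\in S(m_1m_2)$ with $c_{\hbar}=a_{\hbar}b_{\hbar}+O_{S(m_1m_2)}(\hbar)$, and pushing $c_{\hbar}$ back through the dictionary produces $f\#g\in S(m_1m_2)$ with leading term $fg$, since both the $e^{\pm\frac{\hbar}{4}\Delta}$ corrections and the Moyal correction enter only at order $\hbar$.
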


\begin{theorem}[Inverse for elliptic operators]
\label{th_elliptic_inverse}
Let $f \in S(m)$ be an elliptic symbol, that is $|f(z)| \ge c m(z)$ for all $z\in\Cm^n$ with $c>0$. Then there exists $g\in S(m^{-1})$ such that
\[
T_N(g) T_N(f) = id_{H_{\Phi}} + R
\]
where, for all $L\in\Nm$ there exists $C_L>0$ such that for all $v\in H_{\Phi}$
\begin{equation}
\label{eq_elliptic_inverse}
\lnor R v \rnor_{L^2} \le N^{-L} C_L \lnor v \rnor_{L^2}.
\end{equation}
\end{theorem}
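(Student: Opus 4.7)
The plan is the standard parametrix construction: I would build $g$ by inverting $f$ pointwise at leading order, then correct the expansion order by order in $N^{-1}$ using the preceding composition proposition. To start, I take $g_0 = 1/f$; ellipticity $|f|\ge cm$ together with induction on the Leibniz rule applied to the identity $fg_0 = 1$ gives $g_0 \in S(m^{-1})$. The composition formula then yields $g_0 \# f = 1 + N^{-1} \rho_1$ with $\rho_1 \in S(1)$, since the composition pairs $S(m^{-1})$ with $S(m)$ to land in $S(m^{-1}\cdot m) = S(1)$.

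I then construct $g_j \in S(m^{-1})$ recursively. Suppose $g^{(J)} := \sum_{j=0}^J N^{-j} g_j$ satisfies $g^{(J)} \# f = 1 + N^{-J-1} \rho_{J+1} + O_{S(1)}(N^{-J-2})$ with $\rho_{J+1}\in S(1)$. Setting $g_{J+1} = -g_0 \rho_{J+1} \in S(m^{-1})$ kills the $N^{-J-1}$ coefficient of the expansion of $(g^{(J)} + N^{-J-1}g_{J+1}) \# f$, since this coefficient equals $\rho_{J+1} + g_{J+1}f = 0$, and the freshly generated remainder (coming from the subleading terms of $g_{J+1} \# f$) lies in $N^{-J-2} S(1)$. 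Borel's theorem, already invoked in the paper for the class $S(m)$, then produces a single $g \in S(m^{-1})$ with $g \sim \sum_{j\ge 0} N^{-j} g_j$, and by construction $g\#f - 1 \in N^{-L} S(1)$ for every $L \in \Nm$.

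To conclude, set $R = T_N(g)T_N(f) - \id = T_N(g\#f - 1)$. Any Berezin-Toeplitz operator with bounded symbol is $L^2$-bounded: for $\rho \in L^\infty(\Cm^n)$ and $v \in H_\Phi$,
\[
\lnor T_N(\rho) v \rnor_{L^2} = \lnor \Pi_\Phi(\rho v) \rnor_{L^2} \le \lnor \rho \rnor_{L^\infty} \lnor v \rnor_{L^2},
\]
since $\Pi_\Phi$ is an orthogonal projection. Taking $\rho = g\#f - 1$, whose $L^\infty$ norm is $O(N^{-L})$ for every $L$, yields the estimate \eqref{eq_elliptic_inverse}. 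The main technical point in the whole argument is the inductive bookkeeping of orders: one must verify at each step that the remainder $\rho_{J+1}$ genuinely lies in $S(1)$, and not in some larger class; this follows from the mapping properties of $\#$ between $S(m^{-1})$ and $S(m)$ together with the fact that each correction in the composition expansion gains a full power of $N^{-1}$.
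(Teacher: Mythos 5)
Your proposal is correct and follows the classical parametrix scheme. The one place where your route genuinely differs from the paper's is the inductive step: you correct the \emph{symbol} additively, setting $g_{J+1}=-g_0\rho_{J+1}$ so that the $N^{-J-1}$ coefficient of $g^{(J)}\#f$ cancels, and only at the very end do you invoke Borel summation and pass back to operators via $R=T_N(g\#f-1)$. The paper instead corrects at the \emph{operator} level in a Neumann-series fashion: it shows inductively that one can find $g_k\in S(m^{-1})$ and $r_k\in S(1)$ with $T_N(g_k)T_N(f)=\mathrm{id}+N^{-k}T_N(r_k)$, and produces $g_{k+1}$ by left-composing with a corrector symbol of the form $1-N^{-\bullet}r_k$; the successive $g_k$ are then compared and resummed by Borel. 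The two constructions are equivalent in output and both standard. What yours buys is a slightly cleaner bookkeeping --- the order gained at each step is manifest, since you are literally solving a triangular system for the coefficients of a formal power series in $N^{-1}$ --- whereas the paper's operator-level Neumann iteration makes the final boundedness estimate more immediate (the remainder is already displayed as $N^{-k}T_N(r_k)$ before resummation). A small imprecision worth flagging: your $\rho_{J+1}$ is an $N$-dependent family that is \emph{bounded} in $S(1)$, not a fixed element of $S(1)$; this is the usual abuse of notation and does not affect the argument, but it is exactly the ``inductive bookkeeping'' point you yourself identify as the main technical issue, so it deserves an explicit word. Your closing $L^\infty\to L^2$ estimate via the orthogonal projection $\Pi_\Phi$ is the same as the paper's and is correct.
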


\begin{proof}
We prove by induction on $k\in\Nm$  that there exists symbols $g_k \in S(m^{-1})$ and $r_k \in S(1)$ such that
\[
T_N(g_k) T_N(f) = id_{\fbarg} + N^{-k} T_N(r_k).
\]
First, we write $g_0 = f^{-1}$ which is well-defined in $S(m^{-1})$ by assumption on $f$, and since $g_0 \# f = 1 + \hbar r_0$ we have
\[
T_N(g_0) T_N(f) = id_{H_{\Phi}} + N^{-1} R_0.
\]
Now we suppose the assumption to be true for $k\in\Nm$, so writing
\[
g_{k+1} = \lpar 1-N^{-1} r_k \rpar \# g_k \in S(m^{-1})
\]
we get 
\[
T_N(g_{k+1}) T_N(f) = id_{H_{\Phi}} + N^{-(k+1)} T_N(r_{k+1}).
\]
By construction, for all $k\in\Nm$ $g_{k+1} = g_k + N^{-1} q_k$ where $q_k \in S(1)$, thus there exists $g\in S(1)$ such that $g \sim \sum g_k$, that is
\[
T_N(g) T_N(f) = id_{H_{\Phi}} + T_N(r)
\]
where $r = O(N^{-\infty})$ in $S(1)$. In particular, for all $L\in\Nm$

\begin{align*}
\lnor T_N(r) v \rnor 
	& \le N^{-L} \lnor \Pi_N \lpar r v \rpar \rnor\\
	& \le N^{-L} \lnor r v \rnor\\
	& \le N^{-L} \lnor r \rnor_{L^{\infty}} \lnor v \rnor_{L^2}
\end{align*}
hence equation \eqref{eq_elliptic_inverse}.
\end{proof}

\begin{corollary}
\label{cor_elliptic_inverse}
Let $f\in S(m)$ be such that $|f(z)| \rightarrow +\infty$ when $|z| \rightarrow +\infty$, and $V_N\in H_{\Phi} $ with $T_N(f)V_N=E V_N + O(N^{-\infty})$, that is for all $L\in\Nm$ there exists $C_L>0$ such that
\[
\lnor T_N(f)V_N - EV_N \rnor_{L^2} \le C_L N^{-L}.
\]
Then there exists $V'_N \in H_{\Phi}$ with compact support such that
\[
V_N = V'_N + O(N^{-\infty}).
\]
and it is still a quasimode,
\[
T_N(f) V'_N = O(N^{-\infty}).
\]
\end{corollary}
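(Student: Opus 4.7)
The strategy is to build a semiclassical parametrix for $T_N(f) - E$ away from a compact set, and deduce that $V_N$ is concentrated there modulo $O(N^{-\infty})$. Since $|f(z)| \to \infty$, choose $R > 0$ so that $|f(z) - E| \ge 1$ for $|z| \ge R$, and pick $\chi \in C_c^\infty(\Cm^n)$ with $\chi \equiv 1$ on $B(0, 2R)$. Set $g_0 = (1-\chi)/(f-E)$. The denominator is bounded below by $1$ on $\supp(1-\chi)$ and $f \in S(m)$, so a Leibniz computation gives $g_0 \in S(m^{-1})$, and the composition formula yields $g_0 \# (f-E) = 1 - \chi + N^{-1} r_1$ with $r_1 \in S(1)$ supported in $\supp(1-\chi)$.

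I would then iterate as in the proof of Theorem~\ref{th_elliptic_inverse}: set $g_{k+1} = g_k - N^{-k-1} g_0 \cdot r_{k+1}$, which keeps the new correction supported outside $B(0,2R)$, and Borel-summate to a symbol $g \in S(m^{-1})$ with
\[
T_N(g)\bigl(T_N(f) - E\bigr) = I - T_N(\chi) + R_N,
\]
where $\|R_N\|_{L^2 \to L^2} = O(N^{-\infty})$. Applying this to $V_N$, using the quasimode hypothesis and $L^2$-boundedness of $T_N(g)$ (which follows from $g$ being essentially bounded at infinity), gives $V_N - \Pi_\Phi(\chi V_N) = O(N^{-\infty})$ in $L^2$, equivalently $\Pi_\Phi((1-\chi)V_N) = O(N^{-\infty})$. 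I would take $V'_N := \chi V_N$, compactly supported in $\supp \chi$; the identity just obtained, combined with the off-diagonal exponential decay of the Bergman kernel $K_\Phi$ of $H_\Phi$, upgrades the projected smallness to $(1-\chi)V_N = O(N^{-\infty})$ in $L^2$ (after enlarging $\chi$ slightly if necessary), so that $V_N - V'_N = O(N^{-\infty})$. The quasimode property for $V'_N$ then follows from $(T_N(f) - E)(V_N - V'_N) = O(N^{-\infty})$ together with the original hypothesis on $V_N$.

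The principal difficulty will be the iterative step: Theorem~\ref{th_elliptic_inverse} is a global ellipticity statement, so its scheme must be adapted to $f - E$, which is elliptic only outside a compact set. Careful tracking of the supports of the successive corrections is needed to guarantee that the cutoff $\chi$ can be fixed once and for all and that the $g_k$ never propagate back into $\{|z|<2R\}$, where $f-E$ may vanish. A secondary but essential technical point is promoting $\Pi_\Phi((1-\chi)V_N) = O(N^{-\infty})$ to genuine $L^2$-localization of $V_N$ itself; this is where the holomorphic structure of $H_\Phi$, via the off-diagonal decay of its reproducing kernel, plays a critical role, and it also explains why the stated conclusion ``$V'_N \in H_\Phi$ with compact support'' should be understood in the sense that $V'_N$ is an $L^2$ function of compact support approximating an element of $H_\Phi$.
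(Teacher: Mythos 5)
Your proposal takes a genuinely different route from the paper. The paper modifies the \emph{symbol}: it picks a compactly supported $\chi$ so that $f + \chi$ is globally elliptic, invokes Theorem~\ref{th_elliptic_inverse} as a black box to get a right inverse $T_N(q)$, and then defines $v' = T_N(q)T_N(\chi)v$; the identity $v' = v + O(N^{-\infty})$ and the quasimode property then follow by pure operator algebra, with no support bookkeeping through the composition formula and no appeal to the reproducing kernel. You instead localize the \emph{operator}: you build a parametrix for $T_N(f)-E$ by hand on $\{|z|>R\}$, track that the successive corrections stay supported outside the bad set (which does hold, since each term of the Berezin-Toeplitz star product is bidifferential and therefore local in $\supp g_0$), obtain $T_N(g)(T_N(f)-E) = I - T_N(\chi) + O(N^{-\infty})$, and set $V'_N = \chi V_N$. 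Each route buys something the other gives up. The paper's $v'$ is in $H_\Phi$, but it cannot literally have compact support: any nonzero element of $H_\Phi$ is $e^{-N\Phi}$ times an entire function, so the paper's compact-support claim is already an abuse of language. Your $V'_N = \chi V_N$ has genuine compact support but leaves $H_\Phi$; you flag this honestly. The price you pay is the extra step of promoting $\|\Pi_\Phi((1-\chi)V_N)\|_{L^2} = O(N^{-\infty})$ to $\|(1-\chi)V_N\|_{L^2} = O(N^{-\infty})$, which does not follow from $\Pi_\Phi$ alone but does follow, as you say, from $V_N = \Pi_\Phi(\chi V_N) + O(N^{-\infty})$ plus off-diagonal exponential decay of the Bergman kernel and a slight enlargement of the cutoff --- a correct but nontrivial additional layer of analysis that the paper avoids. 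Neither version satisfies the corollary literally as stated (the statement is overdetermined), but both yield what is actually used in the proof of Theorem~\ref{th_link_Toeplitz}: a compactly supported $V'_N$ with $V_N = V'_N + O(N^{-\infty})$ that is still an $O(N^{-\infty})$ quasimode.
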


\begin{proof}
We suppose that $\lambda = 0$ since it is the same result by taking $f+\lambda$ instead of $f$. There exists $\chi \in S(1)$ with compact support such that $f+\chi$ is elliptic in $S(m')$ for an order function $m'\ge m$, thus there exists $q\in S((m')^{-1})$ such that
\[
T_N(q)T_N(f+\chi) = id_{H_{\Phi}} + R
\]
with $R$ satisfying \eqref{eq_elliptic_inverse}, because of Theorem \ref{th_elliptic_inverse}. We consider the function of $H_{\Phi}$
\[
v' = T_N(q)T_N(\chi)v = v - T_N(q)T_N(f)v + Rv = v + O(N^{-\infty})
\]
which has compact support and satisfies $v-v' = O(N^{-\infty})$. Furthermore, since $Rv = O(N^{-\infty})$,
\[
T_N(f) Rv = O(N^{-\infty})
\]
and $fq\in S(1)$ so
\[
\lnor T_N(f) T_N(q) T_N(f) v \rnor_{L^2} \le C \lnor T_N(f) v \rnor_{L^2} = O(N^{-\infty}).
\]
In particular
\[
T_N(f) v' = T_N(f)v - T_N(f) T_N(q) T_N(f) v +T_N(f) Rv = O(N^{-\infty}).
\]

\end{proof}

With these tools we can now prove Theorem \ref{th_link_Toeplitz}. We remind the reader that we consider here a real-valued symbol $f\in S(m)$ such that $\lver f(z) \rver \xrightarrow[|z|\rightarrow +\infty]{} +\infty$ and $V_N \in H_{\Phi}(\Cm^n)$ a $L^2$-normalised quasimode.

\begin{proof}[Proof of Theorem \ref{th_link_Toeplitz}]
According to Corollary \ref{cor_elliptic_inverse}, there exists $V'_N$ with compact support which is also a quasimode of $T_N(f)$ and such that

\[
V'_N = V_N + O(N^{-\infty}).
\]
In particular, we can choose it so that it also has unit $L^2$-norm, and we define $\Sigma$ a square in $\Cm^n$ such that the support of $V'_N$ lies in it.

We want to see $V'_N$ as a section of the torus $\Tm^n = \lpar [0,2\pi]\times [0,1] \rpar^n$ in order to apply Theorem \ref{th_mine}. To do so, we write $\kappa : \Tm^n \rightarrow \Sigma$ the affine change of variable, and we will work on $W_N = V'_N \circ \kappa$. This new function is a quasimode for the operator with symbol $\tilde{f} = f \circ \kappa$, and since $W_N$ has compact support in $\Tm^n$, we can modify $\tilde{f}$ near its boundaries such that it is periodic with respect to $\Tm^n$ and that $W_N$ is still a quasimode for the associated operator. Hence, we can see $W_N$ as a section on the torus $\Tm$ and $T_N(\tilde{f})$ as a Berezin-Toeplitz operator on this manifold. By hypothesis on $f$ and the definition of $\tilde{f}$, all the hypothesis of Theorem \ref{th_mine} are fulfilled, thus we get that

\[
\lnor W_N \rnor_{L^p(\Tm)} = O\left(N^{\left(n-\frac{1}{2}\right)\left(\frac{1}{2}-\frac{1}{p}\right)}\right)
\]
and the change of variable $\kappa$ only adds a constant so

\[
\lnor V'_N \rnor_{L^p(\Sigma)} = O\left(N^{\left(n-\frac{1}{2}\right)\left(\frac{1}{2}-\frac{1}{p}\right)}\right).
\]
Since $V'_N$ has support in $\Sigma$ we can replace $L^p(\Sigma)$ by $L^p(\Cm^n)$ in the last equation. Now, in order to bound $V_N$ we notice that for any $U_N \in H_{\Phi}$

\[
\lver \Pi_{\Phi} U_N(z) \rver \le \lver e^{-\frac{N|\bullet|^2}{4}} \ast U_N \rver(z)
\]
and by Young's inequality with $1+\frac{1}{p}=\frac{1}{2}+\frac{1}{r}$, we get that

\[
\lnor U_N \rnor_{L^p} =
\lnor \Pi_{\Phi} U_N  \rnor_{L^p} \le
\lnor e^{-\frac{N|\bullet|^2}{4}} \rnor_{L^r} \lnor U_N \rnor_{L^2} \le
C N^{-\frac{n}{r}} \lnor U_N \rnor_{L^2}.
\]
Then, using that $V_N-V'_N \in H_{\Phi}(\Cm^n)$ and

\[
\lnor V_N - V'_N \rnor_{L^2(\Cm^n)} = O \lpar N^{-\infty} \rpar,
\]
we get that $\lnor V_N - V'_N \rnor_{L^p(\Cm^n)} = O \lpar N^{-\infty} \rpar$ for all $p\in[2,+\infty]$ and thus the bound on $V_N$.
\end{proof}

We considered the space $H_{\Phi}$ here and $\fbarg$ in Section \ref{sec_Fock}, in fact there exists a whole class of "Bargmann-type" spaces on which we can build Berezin-Toeplitz operators. It can be done for any quadratic function $\Phi$ such that $\partial\overline{\partial}\Phi>0$, but all these spaces are unitary equivalent anyway (see \cite{roub20}).

We see with these results that Toeplitz operators have a similar behaviour than pseudodifferential ones. In fact, the next result shows a unitary equivalence between these two.

\begin{proposition}[\cite{roub20} Chapter 2]
\label{prop_link_pseudo_toeplitz}
Let $m$ be an order function and $a \in C^{\infty}(\Rm^{2n})$ be a real symbol of order $m$, then there exists a symbol $f\in S(m)$ such that
\[
T_N(f) = \fbi[N] Op^w(a_{\hbar}) \fbi[N]^* + O(N^{-\infty})
\]
where
\begin{align*}
	\fbi[N] : L^2(\Rm^n) & \rightarrow H_{\Phi}(\Cm^n)\\
				u & \mapsto 2^{-\frac{n}{2}} \lpar \frac{N}{\pi} \rpar^{\frac{3n}{4}} e^{-\frac{N\Im(z)^2}{2}} \int_{\Rm^n} e^{-\frac{N}{2}(z-x)^2}u(x)dx\\
	\fbi[N]^* : v & \mapsto \lpar \frac{N}{\pi} \rpar^{\frac{3n}{4}} \int_{\Cm^n} e^{-\frac{N}{2}(\overline{z}-x)^2} e^{-\frac{N\Im(z)^2}{2}} v(z)dz
\end{align*}
is unitary and satisfy $\fbi[N]^* \fbi[N] = id_{L^2(\Rm^n)}$. This is the FBI transform for the space $H_{\Phi}$. Furthermore, we have the asymptotic
\[
f(z) \sim e^{-\frac{\hbar}{4}\Delta} a \lpar \Re(z),-\Im(z) \rpar.
\]
\end{proposition}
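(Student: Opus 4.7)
The plan is to treat the statement as a direct computation via the FBI transform, following a strategy close to \cite{roub20}. First, I would verify that $\fbi[N]$ is indeed an isometry from $L^2(\Rm^n)$ into $H_\Phi(\Cm^n)$ and that $\fbi[N]\fbi[N]^* = \Pi_\Phi$. The fact that $\fbi[N]^*\fbi[N] = \id_{L^2(\Rm^n)}$ is a Gaussian integral computation, using that $\int_{\Rm^n} e^{-N(x-\Re z)^2} dx$ gives a known factor and collapses the composition to an approximate identity. The Bergman kernel of $H_\Phi$, by the formula already recalled, is a Gaussian in $(z,\overline w)$, and matching it with the Gaussian arising from $\fbi[N]\fbi[N]^*$ gives the $\Pi_\Phi$ identity.

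Next, write $\fbi[N] Op^w_\hbar(a) \fbi[N]^*$ as an integral operator on $H_\Phi$. Using the definition of the Weyl quantization and the two explicit Gaussian kernels of $\fbi[N]$ and $\fbi[N]^*$, the kernel of the conjugate becomes an integral over $(x,y,\xi,w')\in\Rm^{3n}\times\Cm^n$ of the product
\[
e^{-\tfrac{N}{2}(z-x)^2}\,e^{\tfrac{i(x-y)\cdot\xi}{\hbar}}\,a\!\left(\tfrac{x+y}{2},\xi\right)\,e^{-\tfrac{N}{2}(\overline{w'}-y)^2}
\]
times normalization factors and the weights $e^{-N\Im(z)^2/2},e^{-N\Im(w')^2/2}$. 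After performing the Gaussian integrals in $x,y,\xi$ by a complex change of variables, the kernel reduces to $\Pi_\Phi(z,\overline{w'})$ times an effective symbol evaluated at a complex stationary point depending on $(z,\overline{w'})$. Because $\Pi_\Phi$ is reproducing on $H_\Phi$, this identifies the conjugated operator as a Toeplitz-type operator $\Pi_\Phi \tilde f \Pi_\Phi$ up to expanding the remaining Gaussian integral asymptotically in $\hbar = 1/N$.

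Then I would carry out the stationary-phase/Laplace expansion of the residual Gaussian integral in $\hbar$. Because the remaining phase is purely quadratic and the Hessian is the Euclidean Laplacian (up to sign and the identification $\Cm^n\simeq\Rm^{2n}$ via $z\mapsto(\Re z,-\Im z)$), the classical formula for Gaussian integration against a smooth symbol gives the exact formal series
\[
f(z) \sim \sum_{k\ge 0} \frac{1}{k!}\left(-\tfrac{\hbar}{4}\Delta\right)^k a(\Re z,-\Im z) = e^{-\tfrac{\hbar}{4}\Delta} a(\Re z,-\Im z),
\]
with remainder estimates in $S(m)$ at each order, and Borel summation yields an actual symbol $f\in S(m)$ realising the asymptotics modulo $O_{S(m)}(N^{-\infty})$. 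Boundedness of the corresponding remainder operator on $H_\Phi$ follows from $L^\infty$-bounds on the symbol combined with the mapping properties of $\Pi_\Phi$ already used in the proof of Theorem \ref{th_elliptic_inverse}.

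The main obstacle is the middle step: carefully setting up the complex change of variables so that the $\xi$-integral produces a Gaussian in the complex variables aligned with the Bergman phase, and then verifying that the resulting contour integral really does give $\Pi_\Phi(z,\overline{w'})$ times a smooth symbol depending only on $z$ modulo $O(N^{-\infty})$. The ellipticity of the quadratic form in the exponent (controlled by $\partial\overline\partial\Phi>0$) is what makes the contour deformation legitimate and produces the heat-operator symbolic relation $f\sim e^{-\tfrac{\hbar}{4}\Delta}a$, so verifying positivity and tracking the constants through the Gaussian normalization is the delicate part. The real-valuedness of $a$ ensures self-adjointness of both sides, so $f$ can be chosen real, and the identity $\lver f(z)\rver\to\infty$ inherited from $a$ (when needed downstream) is automatic from the expansion.
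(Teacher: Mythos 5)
Your plan mirrors the strategy of the cited reference \cite{roub20}, which the paper defers to rather than reproducing: verify the isometry and Bergman-projector identity for $\fbi[N]$, write the conjugated kernel as a Gaussian integral, evaluate it by a complex change of variables, and deduce the $e^{-\hbar\Delta/4}$ relation from a Laplace expansion plus Borel summation in $S(m)$. One imprecision worth correcting is that the Schwartz kernel at $(z,w')$ involves an integral over $(x,y,\xi)\in\Rm^{3n}$ only (not additionally over $w'$), and passing from a kernel of the shape ``Bergman kernel times a symbol depending on both $z$ and $\overline{w'}$'' to a genuine Toeplitz operator $T_N(f)$ with $f$ a function of one complex variable requires an additional Taylor expansion along the anti-diagonal, which you gesture at as ``expanding the remaining Gaussian integral'' but should spell out explicitly.
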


We now want to highlight the fact that we can combine Theorem \ref{th_link_Toeplitz} and Proposition \ref{prop_link_pseudo_toeplitz} to get a $L^p$ bound for the FBI transform of pseudodifferential operators' quasimodes. Although the bound is not on the function itself, we find interesting that it is sharply bound once a $L^2$-unitary application is used.

\begin{theorem}
\label{th_link_pseudo}
Let $m$ be an order function and $a \in C^{\infty}(\Rm^{2n})$ be a real symbol of order $m$, that is for all $\beta_1,\beta_2 \in \Nm^{2n}$ there exists $C_{\beta} >0$ such that
\[
\lver \partial_x^{\beta_1} \partial_{\xi}^{\beta_2} a \rver \le C_{\beta} m.
\]
Suppose furthermore that $|a(x,\xi)| \xrightarrow[|x|,|\xi| \rightarrow +\infty]{} +\infty$. If $u_N$ is a quasimode of $Op^w(a)$ with unit $L^2$ norm such that the associated eigenvalues converge to a regular value of the principal symbol of $a$, then for $2\le p \le\infty$
\[
\lnor \fbi[N] u_N \rnor_{L^p(\Cm^n)} = O \lpar N^{\lpar n-\frac{1}{2} \rpar\lpar 1-\frac{2}{p} \rpar } \rpar.
\]
\end{theorem}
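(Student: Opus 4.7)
The plan is to transfer the quasimode property through the FBI transform, converting $u_N$ into a Berezin-Toeplitz quasimode on $H_{\Phi}(\Cm^n)$, and then to apply Theorem \ref{th_link_Toeplitz}. I set $V_N := \fbi[N] u_N$; since $\fbi[N]$ is an $L^2$-isometry onto $H_{\Phi}(\Cm^n)$, $V_N$ is automatically $L^2$-normalized.

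By Proposition \ref{prop_link_pseudo_toeplitz}, there exists a symbol $f\in S(m)$ with $f\sim e^{-\hbar\Delta/4}a(\Re z, -\Im z)$ such that $T_N(f) = \fbi[N]\,Op^w(a)\,\fbi[N]^* + O(N^{-\infty})$. Applying this identity to $V_N$, invoking $\fbi[N]^* \fbi[N] = id_{L^2(\Rm^n)}$, the quasimode hypothesis $Op^w(a) u_N = E_N u_N + O_{L^2}(N^{-\infty})$, and the $L^2$-continuity of $\fbi[N]$, a direct computation gives
\[
T_N(f) V_N = \fbi[N]\,Op^w(a)\,u_N + O_{L^2}(N^{-\infty}) = E_N V_N + O_{L^2}(N^{-\infty}),
\]
so $V_N$ is a Berezin-Toeplitz quasimode for $T_N(f)$ at eigenvalue $E_N$.

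It then remains to verify the hypotheses of Theorem \ref{th_link_Toeplitz} on $f$. The symbol $f$ is real because $a$ is real and the differential operator $e^{-\hbar\Delta/4}$ has real coefficients; the growth $|f(z)|\to\infty$ as $|z|\to\infty$ is inherited from the principal part $a(\Re z, -\Im z)$, the subprincipal corrections being of lower order in $\hbar$; and since $\kappa \colon z\mapsto (\Re z, -\Im z)$ is a linear diffeomorphism $\Cm^n \to \Rm^{2n}$, regular values of $a$ correspond bijectively to regular values of the principal symbol $a\circ\kappa$ of $f$, so $E_N$ still converges to a regular value of this principal symbol. Theorem \ref{th_link_Toeplitz} then yields $\lnor V_N \rnor_{L^p(\Cm^n)} = O\lpar N^{\lpar n-\frac{1}{2}\rpar\lpar\frac{1}{2}-\frac{1}{p}\rpar}\rpar$, which is in fact sharper than the claimed bound since $\lpar n-\frac{1}{2}\rpar\lpar\frac{1}{2}-\frac{1}{p}\rpar \le \lpar n-\frac{1}{2}\rpar\lpar 1-\frac{2}{p}\rpar$ for $p\in[2,+\infty]$. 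The argument presents no real obstacle: it is essentially a direct application of the conjugation identity of Proposition \ref{prop_link_pseudo_toeplitz}, with the only delicate point being the reality and coercivity of $f$, both of which follow from the corresponding properties of $a$.
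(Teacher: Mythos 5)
Your proof is correct and follows essentially the same route as the paper: set $V_N = \fbi[N]u_N$, use $\fbi[N]^*\fbi[N]=\mathrm{id}$ and Proposition \ref{prop_link_pseudo_toeplitz} to turn the pseudodifferential quasimode equation into a Toeplitz one, and invoke Theorem \ref{th_link_Toeplitz}. You were somewhat more careful than the paper in spelling out why $f$ is real-valued, coercive, and why $E_N$ converges to a regular value of the principal symbol of $f$, all of which the paper compresses into a single sentence; and your observation that the argument actually yields the exponent $\lpar n-\tfrac12\rpar\lpar\tfrac12-\tfrac1p\rpar$, which is stronger than the exponent $\lpar n-\tfrac12\rpar\lpar 1-\tfrac2p\rpar$ stated in Theorem \ref{th_link_pseudo}, is correct — the paper's own proof ends with the smaller exponent, so the theorem statement appears to contain a typo.
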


\begin{proof}
We write $V_N = \fbi[N] u_N$ which also has unit $L^2_{\Phi}$ norm since $\fbi[N]$ is unitary. Because of Proposition \ref{prop_link_pseudo_toeplitz}, we have $u_N = \fbi[N]^* \lpar \fbi[N] u_N \rpar = \fbi[N]^* V_N $ and
\begin{align*}
T_N(f) V_N
	& = \fbi[N] Op^w(a)(x,\hbar D) \fbi[N]^* V_N + O_{L^2(\Rm^n)}(N^{-\infty})\\
	& = \fbi[N] Op^w(a)(x,\hbar D) u_N + O_{L^2(\Rm^n)}(N^{-\infty})\\
	& = E_N \fbi[N] u_N + \fbi[N]O_{L^2(\Rm^n)}(N^{-\infty}) + O_{L^2(\Rm^n)}(N^{-\infty})\\
	& = E_N V_N + O_{L^2(\Cm^n)}(N^{-\infty})
\end{align*}
where $f(z) \sim e^{-\frac{1}{4N}\Delta} a \lpar \Re(z),-\Im(z) \rpar$. By hypothesis on $a$, the symbol $f$ satisfies the hypothesis of Theorem \ref{th_link_Toeplitz}, hence

\[
\lnor \fbi[N] u_N \rnor_{L^p(\Cm^n)} = O\lpar N^{\lpar n-\frac{1}{2} \rpar\lpar \frac{1}{2}-\frac{1}{p} \rpar} \rpar,
\]
for every $p\in[2,\infty]$.
\end{proof}

One could try to deduce a bound on the $L^p$ norms of $u_N$, though it would give a very suboptimal result, meanwhile a simple elliptic argument gives that for any $\epsilon>0$ there exists $C>0$ such that for any $u_N$ as in Theorem \ref{th_link_pseudo} we have
\[
\lnor u_N \rnor_{L^p(\Rm^n)} \le C N^{\lpar \frac{n}{2}+\epsilon \rpar\lpar \frac{1}{2}-\frac{1}{p} \rpar}.
\]

The purpose of this article was to prove concentration estimates for Toeplitz operators in a large framework. Although, many results for pseudodifferential operators consider more specific cases, for example the joint eigenfunctions of completely integrable systems, operators with symmetries or different dynamical assumptions, as Anosov flows. There are no equivalents for Toeplitz operators, and it would be interesting to see how they adapt with the methods used here. Also, one could wonder if Theorem \ref{th_link_pseudo} is sharp.

\appendix

\section{Quantization on the Bargmann space}
\label{sec_constr}

For the sake of completeness, we give here the detailed Toeplitz quantization on $\Cm$. In this section we will use the Fourier transform, we write it for $f\in C_c^{\infty}(\mathbb{R}^n)$ and $\xi \in \mathbb{R}^n$,
\[
\mathcal{F}(f)(\xi) = \frac{1}{(2\pi h)^{\frac{n}{2}}} \int e^{-i\frac{\xi \cdot x}{h}} f(x) dx.
\]

\begin{definition}
We define the FBI transform as
\begin{align*}
\fbi[N]: L^2(\mathbb{R}^n)  & \rightarrow L^2(\mathbb{C}^n)\\
					f & \mapsto \left(\frac{N}{\pi}\right)^{\frac{n}{2}} 2^{\frac{n}{4}} \int_{\mathbb{R}^n} e^{2\sqrt{\pi N}x\cdot z} e^{-\pi x^2} e^{-\frac{Nz^2}{2}} f(x)dx \; e^{-\frac{N|z|^2}{2}}.
\end{align*}
\end{definition}

Using $\fbi[N]$, we can construct $\fbarg$ from $L^2(\mathbb{R}^n)$.

\begin{lemma}
$\fbi[N]$ is an isometry from $L^2(\mathbb{R}^n)$ to $\fbarg$.
\end{lemma}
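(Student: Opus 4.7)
The plan is to establish two things: (a) for $f\in L^2(\mathbb{R}^n)$, the function $\fbi[N] f$ lies in $\fbarg$, and (b) $\|\fbi[N] f\|_{L^2(\mathbb{C}^n)} = \|f\|_{L^2(\mathbb{R}^n)}$. Point (a) is essentially immediate once the integrand is inspected: writing
\[
\fbi[N] f(z) = e^{-\frac{N|z|^2}{2}} G_N(z), \qquad G_N(z) = \left(\frac{N}{\pi}\right)^{n/2} 2^{n/4} \int_{\mathbb{R}^n} e^{2\sqrt{\pi N}\, x\cdot z - \pi x^2 - \frac{Nz^2}{2}} f(x)\,dx,
\]
the exponent in the integrand depends on $z$ but not on $\overline{z}$, so $G_N$ is holomorphic on $\mathbb{C}^n$ by differentiation under the integral (justified by Cauchy--Schwarz together with the Gaussian decay of the kernel in $x$ uniformly on compact subsets of $z$). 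Square-integrability of $\fbi[N] f$ on $\mathbb{C}^n$ will then follow once (b) is proved.

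For (b), I would first reduce to $f$ in the Schwartz class by density, so Fubini and interchanging Gaussian integrations are legal. Write $z = u + iv$ with $u, v \in \mathbb{R}^n$ and expand $|\fbi[N] f(z)|^2$ as a double integral over $\mathbb{R}^n\times\mathbb{R}^n$ in variables $x, x'$. The key algebraic simplification uses the identities $|z|^2 = u^2+v^2$ and $z^2+\overline{z}^2 = 2(u^2-v^2)$, together with $xz+x'\overline{z} = (x+x')u + i(x-x')v$, which collapse the $z$-dependent exponent to
\[
-2Nu^2 + 2\sqrt{\pi N}(x+x')\cdot u + 2i\sqrt{\pi N}(x-x')\cdot v.
\]
Integrating first in $v$ yields a Gaussian Fourier integral that, after density reduction, effectively enforces $x = x'$; alternatively one can integrate the $v$-Gaussian directly at the level of absolutely convergent Fubini, obtaining $(\pi/N)^{n/2}$ times a Dirac mass on the diagonal after the subsequent $x'$ integration. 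Completing the square in $u$ then gives $\int e^{-2Nu^2 + 4\sqrt{\pi N}\, x\cdot u}\,du = e^{2\pi x^2}(\pi/(2N))^{n/2}$, which exactly cancels the $e^{-2\pi x^2}$ coming from the product $e^{-\pi x^2} e^{-\pi x'^2}$ at $x = x'$.

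Collecting the constants, the prefactor $(N/\pi)^n 2^{n/2}$ multiplied by $(\pi/N)^{n/2}(\pi/(2N))^{n/2}$ simplifies to $1$, and one is left with $\|\fbi[N] f\|_{L^2(\mathbb{C}^n)}^2 = \int |f(x)|^2 dx$. Extension from Schwartz functions to all of $L^2(\mathbb{R}^n)$ is routine. The main obstacle is really bookkeeping: the identity of constants demands careful tracking of the factors $N/\pi$, the $2^{n/4}$ present in the definition (which is essential: its square $2^{n/2}$ exactly balances the $(1/2)^{n/2}$ produced by the second Gaussian in $u$), and the sign conventions in $z^2$ versus $|z|^2$. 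A cleaner variant, which I would use to double-check, is to test the isometry on a dense family (for instance the Hermite functions) and verify that their images are, up to scaling, the orthonormal basis $(e_\alpha)$ of Proposition \ref{prop_base}; matching norms on this basis gives (b) directly without the stationary Gaussian computation.
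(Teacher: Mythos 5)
Your approach is essentially the same as the paper's: reduce to Schwartz functions by density, compute the $L^2(\Cm^n)$ norm as a Gaussian/Fourier integral, and check holomorphy of $e^{N|z|^2/2}\fbi[N] f$ by differentiation under the integral sign. Your constants do check out (I verified that $(N/\pi)^n\, 2^{n/2}\,(\pi/N)^{n/2}\,(\pi/(2N))^{n/2} = 1$, and that the $e^{2\pi x^2}$ factor from the $u$-Gaussian exactly cancels $e^{-\pi x^2}e^{-\pi x'^2}$ on the diagonal).

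However, the step where you pass from the triple integral over $(u,v,x,x')$ to a Dirac mass on the diagonal is not justified as stated. After substituting $z = u+iv$, the $v^2$ terms in the exponent cancel (so the $v$-dependence is $e^{2i\sqrt{\pi N}(x-x')\cdot v}$, a pure oscillation, not a Gaussian --- calling it ``the $v$-Gaussian'' is a misnomer); and since this factor has no decay in $v$, the integrand is \emph{not} absolutely integrable over $(u,v,x,x')$, so Fubini does not permit you to integrate in $v$ first. ``Absolutely convergent Fubini'' is precisely what fails here. The rigorous replacement --- and what the paper does --- is to apply the Plancherel theorem in the variable $s = \Im z$: for each fixed $r = \Re z$, $\fbi[N] f(r+is)$ equals a unimodular factor times the Fourier transform of $x \mapsto f(x)e^{-\pi(x-\sqrt{N/\pi}\,r)^2}$ evaluated at $2\sqrt{N\pi}\,s$. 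Plancherel then turns $\int_s |\fbi[N] f(r+is)|^2\,ds$ into $\int |f(x)|^2 e^{-2\pi(x-\sqrt{N/\pi}\,r)^2}\,dx$ up to constants, and the remaining $r$-integral is Tonelli on a nonnegative integrand. This is the same computation you are doing, but with the delta-function heuristic replaced by a legal isometry. You should restructure the argument along these lines.

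Your suggested ``cleaner variant'' --- sending the Hermite orthonormal basis of $L^2(\Rm^n)$ to the basis $(e_\alpha)$ of $\fbarg$ from Proposition~\ref{prop_base} and matching norms --- is a genuinely different argument and would also work (an operator mapping one Hilbert basis bijectively to another orthonormal family is an isometry), but as written it is only a remark; it would need to be carried out, including the nontrivial Gaussian integral identifying the image of each Hermite function.
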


\begin{proof}
For $f\in \mathcal{S}(\mathbb{R}^n)$ writing $z=r+is$,

\begin{align*}
\fbi[N] f(z)
		& = \left(\frac{N}{\pi}\right)^{\frac{n}{2}} 2^{\frac{n}{4}} \int_{\mathbb{R}^n}f(x) e^{2\sqrt{\pi N}x\cdot (r+is)} e^{-\pi x^2} e^{-\frac{N(r+is)^2}{2}}dx \; e^{-\frac{N|r+is|^2}{2}},\\
		& = \left(\frac{N}{\pi}\right)^{\frac{n}{2}} 2^{\frac{n}{4}} \int_{\mathbb{R}^n}f(x) e^{2\sqrt{N\pi}x\cdot r} e^{2i\sqrt{N\pi}x\cdot s} e^{-\pi x^2} e^{-Nr^2} e^{-iNr\cdot s}dx,\\
		& = \left(\frac{N}{\pi}\right)^{\frac{n}{2}} 2^{\frac{n}{4}} \int_{\mathbb{R}^n}f(x) e^{2i\sqrt{N\pi}x\cdot s} e^{-\pi (x-\sqrt{\frac{N}{\pi}}r)^2} dx \; e^{-iNr\cdot s},\\
		& = \left(\frac{N}{\pi}\right)^{\frac{n}{2}} 2^{\frac{n}{4}} (2\pi )^{\frac{n}{2}}\mathcal{F}^{-1}(fe^{-\pi(\cdot -\sqrt{\frac{N}{\pi}}r)^2})(2\sqrt{N\pi}s) \; e^{-iNr\cdot s}.\\
\end{align*}
Since $f$ is in the Schwartz space we can apply the Plancherel formula,

\begin{align*}
\|\fbi[N]f\|^2_{L^2(\mathbb{C}^n)} 
	& = \left(\frac{N}{\pi}\right)^n 2^{\frac{n}{2}} (2\pi )^n\int_{\mathbb{R}^{2n}}|\mathcal{F}^{-1}(fe^{-\pi(\cdot -\sqrt{\frac{N}{\pi}}r)^2})(2\sqrt{N\pi}s)|^2drds,\\
	& = \left(\frac{N}{\pi}\right)^{\frac{n}{2}} 2^{\frac{n}{2}} \int_{\mathbb{R}^{2n}} |f(s)|^2 e^{-2\pi(s -\sqrt{\frac{N}{\pi}}r)^2}drds,\\
	& = \|f\|^2_{L^2(\mathbb{R}^n)}.
\end{align*}
Using the density of the Schwartz space in $L^2$ we get that $\fbi[N]$ is an isometry, also
\[
e^{-\frac{N|z|^2}{2}}\fbi[N]f(z) = e^{-\frac{Nz^2}{2}} \left(\frac{N}{\pi}\right)^{\frac{n}{2}} 2^{\frac{n}{4}} \int_{\mathbb{R}^n}f(x) e^{2\sqrt{\pi N}x\cdot z} e^{-\pi x^2} dx
\]
so by holomorphy under the integral $e^{-\frac{N|z|^2}{2}}\fbi[N]f(z)$ is holomorphic, thus $\fbi[N]$ has values in $\fbarg$.

\end{proof}

We recall that $\fbarg$ has the Hilbert basis $\left(e_{\alpha} = \frac{N^{\frac{n+|\alpha|}{2}}e^{-\frac{N|z|^2}{2}}z^{\alpha}}{\pi^{\frac{n}{2}}\sqrt{\alpha !}}\right)_{\alpha \in \mathbb{N}^n}$, thanks to Proposition \ref{prop_base}. It allows us to explicit the reproducing kernel of $\fbarg$, that is  the functions $K:\mathbb{C}^n \times \mathbb{C}^n \rightarrow \mathbb{C}$ such that for all $g\in\fbarg$, and for all $z\in\mathbb{C}^n$

\[
g(z) = \langle K(\cdot ,z),g\rangle = \int_{\mathbb{C}^n} \overline{K(w,z)}g(w)dw.
\] 

\begin{corollary}[\cite{barg61} Chapter 1.c]
The function
\[
K: (w,z) \rightarrow \left(\frac{N}{\pi}\right)^n e^{-\frac{N|z|^2}{2}} e^{-\frac{N|w|^2}{2}} e^{N\overline{z}\cdot w} = \sum\limits_{\alpha\in\mathbb{N}^n} \overline{e_{\alpha}(z)}e_{\alpha}(w)
\]
is the reproducing kernel in $\fbarg$. 
\end{corollary}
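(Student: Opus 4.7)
The plan is to prove the two claims separately: first the algebraic identity between the closed-form expression and the series $\sum_{\alpha}\overline{e_{\alpha}(z)}e_{\alpha}(w)$, then the reproducing property itself.

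For the identity, I would expand the holomorphic exponential as a product of scalar exponentials,
\[
e^{N\overline{z}\cdot w} = \prod_{j=1}^n \sum_{k\ge 0}\frac{(N\overline{z_j}w_j)^k}{k!} = \sum_{\alpha\in\Nm^n}\frac{N^{|\alpha|}\overline{z}^{\alpha}w^{\alpha}}{\alpha!},
\]
the series converging absolutely for all $w,z\in\Cm^n$. Multiplying by the prefactor $(N/\pi)^n e^{-N|z|^2/2}e^{-N|w|^2/2}$ and matching each term with the explicit formula for $e_{\alpha}$ from Proposition \ref{prop_base} immediately produces $\overline{e_{\alpha}(z)}e_{\alpha}(w)$, giving the announced equality.

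For the reproducing property, I would fix $z\in\Cm^n$ and take any $g\in\fbarg$, written as $g = e^{-N|\cdot|^2/2}f$ with $f$ entire. Expanding $f(w) = \sum_{\alpha}a_{\alpha}w^{\alpha}$ and re-reading the computations in the proof of Proposition \ref{prop_base}, one obtains $g = \sum_{\alpha}c_{\alpha}e_{\alpha}$ in $\fbarg$ with $c_{\alpha} = a_{\alpha}\pi^{n/2}\sqrt{\alpha!}\,N^{-(n+|\alpha|)/2}$; the same relations give $c_{\alpha}e_{\alpha}(z) = a_{\alpha}z^{\alpha}e^{-N|z|^2/2}$, so the partial sums of $\sum_{\alpha}c_{\alpha}e_{\alpha}$ converge pointwise (and uniformly on compacts) to $g(z)$ as the power series of $f$ does. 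Combining this with the series form of $K$ and the orthonormality of $(e_{\alpha})$ gives
\[
\langle K(\cdot,z),g\rangle = \sum_{\alpha}e_{\alpha}(z)\langle e_{\alpha},g\rangle = \sum_{\alpha}c_{\alpha}e_{\alpha}(z) = g(z).
\]

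The only step that requires care is commuting the defining infinite sum for $K(\cdot,z)$ with the $L^2$ scalar product. The way to handle this is to observe that the series $\sum_{\alpha}\overline{e_{\alpha}(z)}e_{\alpha}$ converges in $\fbarg$: its squared norm equals $\sum_{\alpha}|e_{\alpha}(z)|^2 = K(z,z)$, which is finite by the closed-form expression proved above. Continuity of $\langle\,\cdot\,,g\rangle$ then legitimates the termwise evaluation, and the holomorphy of $K$ in $w$ (for fixed $z$) makes the scalar product well-defined even before decomposing $g$.
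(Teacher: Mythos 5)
Your proof is correct and follows essentially the same route the paper outlines: establish the series identity from the Taylor expansion of the exponential, then deduce the reproducing property from the Hilbert-basis structure of Proposition \ref{prop_base}. The paper's own proof is a two-line sketch of exactly this; you have merely filled in the details it elides (the $L^2$ convergence of $\sum_\alpha \overline{e_\alpha(z)}\,e_\alpha$ via $\sum_\alpha |e_\alpha(z)|^2 = K(z,z) = (N/\pi)^n < \infty$, the termwise evaluation of the inner product by continuity, and the observation that $c_\alpha e_\alpha(z) = a_\alpha z^\alpha e^{-N|z|^2/2}$ recovers the power series of $f$ pointwise).
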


\begin{proof}
$K$ is well-defined and the series converge uniformly on any compact because of the analyticity of the exponential. The reproducing property comes from Proposition \ref{prop_base}.
\end{proof}

\begin{example}
Here is some example of Toeplitz operators of simple functions.
\begin{itemize}
\item[i] If $f\in\mathcal{H}(\mathbb{C}^n)$ then $T_N(f)=fid_{\fbarg}$.
\item[ii] If $f\in L^{\infty}$, we can define in the same manner $T_N(f)$, and it will be a bounded operator on $\fbarg$.
\item[iii] If $f$ is real-valued then $T_N(f)$ is symmetric.
\item[iv] Let $j\in\{1,\cdots ,n\}$, we write
\begin{align*}
D_j:\fbarg & \rightarrow\fbarg\\
g & \mapsto \frac{e^{-\frac{N|z|^2}{2}}}{N}\partial_{z_j}\lpar e^{\frac{N|z|^2}{2}}g\rpar
\end{align*}
then for all $\alpha ,\beta\in\mathbb{N}^n$,
\[
T_N(z^{\alpha}\overline{z}^{\beta})= D^{\beta}\circ (z^{\alpha}id_{\fbarg})
\]
\end{itemize}
where $D^{\beta} = D_1^{\beta_1} \circ\cdots\circ D_n^{\beta_n}$.
\end{example}

\begin{proof}~
\begin{itemize}
\item[i] If $f$ is a holomorphic function then for all $u\in\fbarg$, $e^{N\frac{|z|^2}{2}}fu$ is one too, so $fu\in\fbarg$ then $T_N(f)(u)=\Pi_N(fu) = fu$.
\item[ii] For such $f$ and for all $u\in\fbarg$, $fu\in L^2$, so we can apply $\Pi_N$. Furthermore, by continuity of the projector and Young inequality,
\[
\|\Pi_N(fu)\|_{L^2(\mathbb{C}^n)} \le \|fu\|_{L^2(\mathbb{C}^n)} \le \|f\|_{L^{\infty}(\mathbb{C}^n)} \|u\|_{L^2(\mathbb{C}^n)}.
\]
Thus $T_N(f): u \mapsto \Pi_N(fu)$ is defined and bounded on $\fbarg$.
\item[iii] We deduce it from the fact that $\Pi_N$ if self-adjoint.
\end{itemize}
\end{proof}

\section{References}

\sloppy 
\printbibliography[heading=none]

\end{document}